\newtheorem{thm}{Theorem}[section]
\newtheorem{prop}[thm]{Proposition}
\newtheorem{lem}[thm]{Lemma}
\newtheorem{cor}[thm]{Corollary}
\theoremstyle{definition}
\newtheorem{defi}[thm]{Definition}
\newtheorem{ass}[thm]{Assumptions}
\theoremstyle{remark}
\newtheorem{rem}[thm]{Remark}
\numberwithin{equation}{section}
\let\cal\mathcal
\let\rm\mathrm
\let\bf\mathbf
\newcommand{\N}{\mathbb{N}}
\newcommand{\R}{\mathbb{R}}
\newcommand{\C}{\mathbb{C}}
\newcommand{\K}{\mathbb{K}}
\newcommand{\Pp}{\mathbb{P}} 
\newcommand{\E}{\mathbb{E}} 
\DeclareMathOperator{\Cov}{Cov} 
\newcommand{\mmp}{\mathrm{MP}} 
\newcommand{\Tr}{\mathrm{Tr}} 
\newcommand{\Sp}{\mathrm{Sp}} 
\newcommand{\verti}[1]{{\left\vert #1 \right\vert}}
\newcommand{\vertii}[1]{{\left\vert\kern-0.25ex\left\vert #1 
    \right\vert\kern-0.25ex\right\vert}}
\newcommand{\vertiii}[1]{{\left\vert\kern-0.25ex\left\vert\kern-0.25ex\left\vert #1 \right\vert\kern-0.25ex\right\vert\kern-0.25ex\right\vert}}
\newcommand{\vertif}[1]{{\left\vert\kern-0.25ex\left\vert #1 
    \right\vert\kern-0.25ex\right\vert}_F} 
\newcommand{\vertim}[1]{{\left\vert\kern-0.25ex\left\vert #1 
    \right\vert\kern-0.25ex\right\vert}_{\max}}
\newcommand{\vertih}[1]{{\left\vert\kern-0.25ex\left\vert #1 
    \right\vert\kern-0.25ex\right\vert}_{\cal H}}    
\newcommand{\pt}[1]{{\left( #1 \right)}}
\newcommand{\br}[1]{{\left[ #1 \right]}}
\newcommand{\lint}{[\![}
\newcommand{\rint}{]\!]}
\newcommand{\Sigmal}{{\Sigma_{\rm{lin}}} }
\newcommand{\Sigmaa}{{\Sigma_{\rm{approx}}} }
\newcommand{\Sigmat}{{\tilde \Sigma}}
\newcommand{\Sigmax}{ {\Sigma_X}}
\begin{document}

\title[Deterministic equivalent of the Conjugate Kernel]{Deterministic equivalent of the Conjugate Kernel matrix associated to Artificial Neural Networks}

\author{Clément Chouard}\thanks{\textbf{Acknowledgments:} This work received support from the University Research School EUR-MINT (reference number ANR-18-EURE-0023).
}
\address{Institut de Mathématiques de Toulouse; UMR5219\\ Université de Toulouse; CNRS; UPS, F-31062 Toulouse, France}
\email{clement.chouard@math.univ-toulouse.fr}

\maketitle

\begin{abstract} We study the Conjugate Kernel associated to a multi-layer linear-width feed-forward neural network with random weights, biases and data. We show that the empirical spectral distribution of the Conjugate Kernel converges to a deterministic limit. More precisely we obtain a deterministic equivalent for its Stieltjes transform and its resolvent, with quantitative bounds involving both the dimension and the spectral parameter. The limiting equivalent objects are described by iterating free convolution of measures and classical matrix operations involving the parameters of the model.
\end{abstract}

\newpage

\tableofcontents

\newpage

\section{Introduction}

Artificial Neural Networks are computing systems inspired by a simplified model of interconnected neurons, storing and exchanging information inside a biological brain. Theorized in the late fifties \cite{rosenblatt1958perceptron}, this class of algorithms only established oneself as a cutting-edge technology in the two thousands, thanks to the ever-increasing computing power, and perhaps even more thanks to the accessibility of huge databases in the modern-age internet. Compared to the delicate craftsmanship of artificial intelligence specialists, the mathematical understanding of these networks remained at a very basic level until recently. The classical tools of random matrix theory in particular were helpless regarding the intrinsic non-linear connections between the artificial neurons.

This article is concerned about the Conjugate Kernel model, first introduced from the point of view of random matrices in \cite{PW17}. This model is a random matrix ensemble that corresponds to a multi-layer feed-forward neural network with weights initialized at random, in the asymptotic regime where the network width grows linearly with the sample size. In this setting, the Conjugate Kernel  matrix is simply the sample covariance matrix of the output vectors produced by the final layer of the network. It can be shown that the Conjugate Kernel governs the training and generalization properties of the underlying network (\cite{advani2020high}, \cite{yang2019fine}). Its spectral properties are thus of great theoretical and practical interest.

The limiting spectral distribution of the Conjugate Kernel  associated to a single-layer network was first characterized in \cite{PW17} under the form of a quartic polynomial equation satisfied by the Stieltjes transform of the limiting measure. This was proven rigorously in \cite{BP19} under more general assumptions, using advanced combinatorics. Some results on the extremal eigenvalues of the model were even obtained in \cite{benigni2022largest}. \cite{P19} first noticed a connection between these equations and the measures obtained from a free multiplicative convolution with some Marčenko-Pastur distributions. Let us also mention the work of \cite{piccolo2021analysis}, where a variant of model including a rank-one additive perturbation is examined, also by means of combinatorial techniques.

A first step towards a deeper understanding of the model using analytical techniques was done in \cite{LC19} in the case of a single-layer network with deterministic data.  \cite{FW20} later generalized this result to random data and multi-layer networks, and was the first to convincingly explain the appearance of free probability convolutions in the limiting spectral distribution of the Conjugate Kernel matrix. A similar analysis was done in \cite{wang2023deformed} in the regime where the network width is much larger than the sample size.

The aforementioned results may be classified as global laws, in the sense that they establish the existence of a limiting spectral distribution, a problem that is directly linked to the convergence of the Stieltjes transform via the classical inversion formulas. The next natural question that arises in random matrix theory is to look for a deterministic equivalent of the resolvent of the Conjugate Kernel  matrix in the sense of \cite{hachem2007deterministic}. This consists in finding a deterministic matrix that is asymptotically close to the expected resolvent of the matrix, and thus close to the random resolvent itself provided enough concentration in the random matrices. Establishing a  deterministic equivalent result allows for instance to approximate  linear statistics of the eigenvectors \cite{yang2020linear}, to examine the convergence of the eigenvectors empirical spectral distributions, and possibly to study the outliers of spiked models \cite{noiry2021spectral}. Regarding classical sample covariance random ensembles, this task was done a decade ago in the series of articles \cite{pillai2014universality},  \cite{bloemendal2014isotropic}, \cite{knowles2017anisotropic}. Such results were recently partially extended to models with a general dependence structure in \cite{LC21}, \cite{moi1}.

The most important contribution of this article is Theorem \ref{ANNMulDetEq}, that provides a deterministic equivalent of the Conjugate Kernel of a multi-layer neural network model. This theorem extends previously known results in various directions. First we include models with non-differentiable activation functions, as well as potential biases inside or outside the activation function. Secondly we give a quantitative estimate for the convergence of the Stieltjes transforms, which translates into a quantitative convergence of the measures in Kolmogorov distance. Finally and most importantly, we obtain local results, taking the form of a deterministic equivalent for the resolvent matrix of the Conjugate Kernel, quantitatively on both the dimension and the spectral parameter.

In the rest of the paper, we also establish intermediary results that may be interesting by themselves. In particular we show that the free convolution of measures with a Marčenko-Pastur distribution is regular with respect to the Stieltjes transform of these distributions (Theorem \ref{72}). We obtain a similar property for the deterministic equivalent matrices that appear in our results (Proposition \ref{ApproxbgGDet}). We also show how our framework applies to other models involving entry-wise operations on random matrices in Section \ref{practicalex}.

This paper relies mostly on analytical methods that study spectral functions of random matrices. We make great use of the theory of Stieltjes transforms and resolvent matrices, particularly its recent developments towards the deterministic equivalent of  general sample covariance matrices (\cite{LC21}, \cite{moi1}). We introduce a new notion of asymptotic equivalence for objects that depend both on a dimension and on a complex spectral parameter. Our definition is convenient to work with, whilst keeping enough precision to imply some quantitative results like the convergence of measures in Kolmogorov distance.

This analytical toolbox works in conjunction with concentration of measure principles. We follow the framework of \cite{LC20}, in particular we use the notion of Lipschitz concentration that is remarkably compatible with entry-wise operations on random matrices. We also use a linearization argument to study 
the covariance matrices of functions of weakly correlated Gaussian vectors. Using the theory of Hermite polynomials, we provide estimates for this approximation, not only on an entry-wise basis like it was done in \cite{FW20}, but also in spectral norm which is new to our knowledge.

After we released the first version of this manuscript, we came across the preprint \cite{schröder2023deterministic}, which we were not aware of.
A future comparison of the two different perspectives would be certainly interesting, as \cite{schröder2023deterministic} studies similar objects and seems to describe related phenomena to those considered here.

 \subsection{Overview of the article}
 In Section \ref{Sec2}, we remind the notations and basic properties of concentrated random vectors and matrices, following the framework of \cite{LC20}. We also introduce the new notion of asymptotic equivalence that will be key in the rest of the article. In Section \ref{Sec3}, we address the problem of the approximation the covariance matrices of functions of weakly correlated Gaussian vectors. In Section \ref{Sec4}, we remind the general deterministic equivalent results on which this article is based, and we study thoroughly the properties of the matrices appearing in these equivalents. In Section \ref{Sec5}, we study a first model of artificial neural network with a single layer and deterministic data. We also explain how our framework applies to other models involving entry-wise operations on random matrices (\ref{practicalex}). In Section \ref{Sec6}, we analyze a second model of artificial neural network, still consisting of a single layer but with random data instead. In Section \ref{Sec7}, we explain how to study multi-layer networks by induction on the model with one layer. In the Appendix \ref{Sec8}, we prove an independent result about the convergence the convergence in Kolmogorov distance of some empirical spectral measures.

\subsection{General notations and definitions}

The set of matrices with $d$ lines, $n$ columns, and entries belonging to a set $\K$ is denoted as $\K^{d \times n}$. We use the following norms for vectors and matrices: $\vertii{\cdot}$ the Euclidean norm, $\vertif{\cdot}$ the Frobenius norm, $\vertiii{\cdot}$ the spectral norm, and $\vertim{\cdot}$ the entry-wise maximum norm.

Given $M \in \C^{ n \times n}$, we denote by $M^\top$ its transpose, and by $\Tr(M)=\sum_{i=1}^n M_{ii}$ its trace. If $M$ is real and diagonalizable we denote by $\Sp M$ its spectrum and by $\mu_M = \frac 1 n \sum_{\lambda \in \Sp M} \delta_\lambda$ its empirical spectral distribution. If $M$ is symmetric positive semi-definite, then $\Sp M \subset \R^+$, and given a spectral parameter $z \in \C^+ = \{ \omega \in \C$ such that $\Im(\omega) > 0\}$, we define its resolvent $\cal G_M(z) = (M-zI_n)^{-1}$ and its Stieltjes transform $g_M(z) = (1/n) \Tr \cal G_M(z)$.

If $\mu$ is a real probability distribution, its Stieltjes transform is $g_\mu(z) = \int_\R \frac {\mu(dt)} {t-z}  $, well defined for $z \in \C^+$. It is easy to see that the Stieltjes transform of a matrix is the same as the Stieltjes transform of its empirical spectral distribution, that is $g_{\mu_M}(z) = g_M(z) $. We denote by $\cal F_\mu$ be the cumulative distribution function of the measure $\mu$, and by $D(\mu,\nu) = \sup_{t \in \R} \verti{\cal F_\mu(t) - \cal F_\nu(t)}$ the Kolmogorov distance between $\mu$ and $\nu$. 

Let us also introduce two operations on measures: if $\gamma \in \R$, we denote by $\gamma \cdot \mu + (1-\gamma) \cdot \nu$ the new measure such that $\pt{\gamma \cdot \mu + (1-\gamma) \cdot \nu}(B) = \gamma \mu(B) + (1-\gamma) \nu(B)$ for any measurable set $B$ (it may be a signed measure). If $a,b \in \R$, we denote by $a+b\mu$ the distribution of the random variable $a+bX$, where $X$ is a random variable distributed according to $\mu$.

We denote  by  $\cal N$ a standard (centered and reduced) Gaussian random variable, or the law of this random variable depending on the context. We denote by $\delta_a$ the Dirac delta distribution centered at $a\in\R$, and by $\mmp(\gamma)$ the Marčenko-Pastur distribution with shape parameter $\gamma > 0$. The operator $\boxtimes$ denotes the multiplicative free convolution between measures (\cite{bercovici1993free}). The distribution $\mmp(\gamma) \boxtimes \mu$ may be also defined by its Stieltjes transform $g(z)$, which is the unique solution of the self-consistent equation \cite{MP67}:
\begin{align*} g(z) = \int_\R \frac 1 {( 1 - \gamma - \gamma z g(z) ) t - z } \mu(dt).
\end{align*}

For a better readability, we will sometimes omit  to mention indices $n$ and spectral parameters $z$ in our notations, especially in the course of technical proofs, even if we are dealing implicitly with sequences and complex functions.

\newpage

\section{Technical tools} \label{Sec2}
\subsection{Concentration framework}

\begin{defi}\label{DefConc} Let $X_n$ be a sequence of random vectors in finite dimensional normed vector spaces $(E_n,\vertii{\cdot})$, and let $\sigma_n >0$ be a sequence. 
\begin{enumerate}
    \item We say that $X$ is Lipschitz concentrated if there is a constant $C>0$ such that for any sequence of $1$-Lipschitz maps $f_n:E_n\to \C$, for any $n \in \N$ and $t \geq 0$:
\begin{align*}
    \Pp\pt{\verti{f_n(X_n) - \E\br{f_n(X_n)}}\geq t} \leq C e^{- \frac 1 C \pt{\frac {t}{ \sigma_n}}^2}.
\end{align*}
We note this Lipschitz concentration property $X \propto_{\vertii{\cdot}} \cal E(\sigma_n)$, or simply $X \propto \cal E(\sigma_n)$ when there is no ambiguity on the chosen norm.
\item If $Z_n \in E_n$ is a sequence of deterministic vectors, we say that $X$ is linearly concentrated around $Z$ if there is a constant $C>0$ such that for any sequence of $1$-Lipschitz linear maps $f_n:E_n\to \C$, for any $n \in \N$ and $t \geq 0$:
\begin{align*}
    \Pp\pt{\verti{f_n(X_n) - f_n(Z_n)}\geq t} \leq C e^{- \frac 1 C \pt{\frac {t}{ \sigma_n}}^2}.
\end{align*}
We note this linear concentration property $X \in_{\vertii{\cdot}} Z \pm \cal E(\sigma_n)$, or simply $X \in Z \pm \cal E(\sigma_n)$.
\end{enumerate}
\end{defi}

We refer to \cite{LC21} for a comprehensive study of these notions of  concentration. Let us enumerate the main properties that will be used throughout this article:
\begin{prop}\label{PropConc}\begin{enumerate}
\item A random vector (respectively a matrix) with i.i.d. $\cal N$ entries is $\propto \cal E(1)$ concentrated with respect to the Euclidean norm (respectively the Frobenius norm). Other examples are listed in \cite[Theorem 1]{LC21}. \item A Lipschitz transformation of a Lipschitz concentrated vector still verifies a Lipschitz concentration property \cite[Proposition 1]{LC21}.
\item If $X \propto \cal E(\sigma_n)$, $Y \propto \cal E(\sigma_n')$, and $X$ and $Y$ are independent, then $(X,Y) \propto \cal E(\sigma_n+\sigma_n')$ (\cite[Proposition 7] {LC21}).
  \item Lipschitz concentration implies linear concentration: if $X \propto \cal E(\sigma_n)$, then $X \in \E\br{X} \pm \cal E(\sigma_n)$(\cite[Proposition 4] {LC21}). Both definitions are equivalent in one dimension.
  \item If $X \in Z \pm \cal E(\sigma_n)$ and $\tilde Z$ is another deterministic vector, we have the equivalence: $X \in \tilde Z \pm \cal E(\sigma_n)$ $\iff$ $\vertii{Z-\tilde Z} \leq O(\sigma_n)$ (\cite[Lemma 3] {LC21}).
  \item If $X \in Z \pm \cal E(\sigma_n)$ and $X'$ is another random vector such that $\vertii{X'}\leq O(\sigma_n)$ a.s., then $X+X' \in Z \pm \cal E(\sigma_n)$. \item Since $\vertiii{\cdot}\leq\vertif{\cdot}$ on matrices, the concentration with respect to the Frobenius norm is a stronger property than with the spectral norm.
    \item The map $Y \mapsto \pt{Y^\top Y / n - z I_p}^{-1}$ is $\frac{2^{3/2}|z|^{1/2}}{\Im(z)^2 \sqrt n}$-Lipschitz with respect to the Frobenius norm (\cite[Proposition 4.3]{moi1}). In particular, if $Y \propto_{\vertif{\cdot}} \cal E(1)$, then $\cal G_{Y^\top Y / n }(z) \propto_{\vertif{\cdot}} \cal E\pt{\frac{|z|^{1/2}}{\Im(z)^2 \sqrt n}}$. 
       \item The map $M \mapsto \frac 1 n \Tr\pt{M}$ is $1/\sqrt n$-Lipschitz in Frobenius norm. In particular, if $\cal G_K(z) \propto_{\vertif{\cdot}} \cal E(\sigma_n)$ for some random matrix $K \in \R^{n \times n}$, then $g_K(z) \propto \cal E(\sigma_n/\sqrt n)$. 
       \item If a random variable $X$ is $\propto \cal E(\sigma_n) $ concentrated, then $\verti{X-\E\br{X}}\leq O(\sqrt{\log n} \, \sigma_n)$ a.s. (\cite[Proposition 3.3]{moi1}).
\end{enumerate}
\label{prop202}
\end{prop}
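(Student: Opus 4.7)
This proposition collects ten standard facts about concentration, most of which are stated (and often proved) in the cited references \cite{LC21, moi1}. My plan is therefore not to produce a new proof but to verify each item either by a direct citation or by a short derivation from Definition \ref{DefConc}. I expect no conceptual difficulty; the only computational care is needed in tracking the precise constants for items (8) and (9).

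I would first handle the foundational items (1)--(4). For (1), the classical Borell--Tsirelson--Ibragimov--Sudakov inequality gives sub-Gaussian concentration at scale $1$ for any $1$-Lipschitz function of a standard Gaussian vector in $\R^N$, uniformly in $N$, and the matrix case follows by identifying $\R^{d \times n}$ with $\R^{dn}$ under the Frobenius norm. Item (2) is immediate, since the composition of a $1$-Lipschitz map with an $L$-Lipschitz map is $L$-Lipschitz. For (3), given a $1$-Lipschitz $f_n(X,Y)$, I would first concentrate in $X$ conditionally on $Y$ at scale $\sigma_n$, then use that $Y \mapsto \E_X[f_n(X,Y)]$ is $1$-Lipschitz to concentrate in $Y$ at scale $\sigma_n'$, and finally combine the two sub-Gaussian tails via splitting $t = t_1 + t_2$, producing concentration at scale $\sigma_n + \sigma_n'$. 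Item (4) is tautological: any $1$-Lipschitz linear map is $1$-Lipschitz, and $f_n(\E[X_n]) = \E[f_n(X_n)]$ when $f_n$ is linear.

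Items (5)--(7) and (9) are routine manipulations. For (5), the direction $(\Leftarrow)$ follows from $|f_n(X_n) - f_n(\tilde Z_n)| \leq |f_n(X_n) - f_n(Z_n)| + \vertii{Z_n - \tilde Z_n}$ for any $1$-Lipschitz linear $f_n$, and for $(\Rightarrow)$ one tests with the normalized linear functional $v \mapsto \langle v, Z_n - \tilde Z_n \rangle / \vertii{Z_n - \tilde Z_n}$ and deduces that the deterministic gap must itself be $O(\sigma_n)$. Item (6) is the same triangle trick with $X_n + X_n'$ in place of $X_n$, absorbing $|f_n(X_n')| \leq \vertii{X_n'}$ into the error. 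Item (7) is just $\vertiii{M} \leq \vertif{M}$ applied pointwise in the definitions. Item (9) is Cauchy--Schwarz: $|\Tr(M - M')| \leq \sqrt{n}\,\vertif{M - M'}$, whence $M \mapsto \Tr(M)/n$ is $1/\sqrt n$-Lipschitz.

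The quantitatively finer items are (8) and (10). For (8), I would start from the resolvent identity $\cal G(A) - \cal G(B) = \cal G(A)(B - A)\cal G(B)$ applied to $A = Y_1^\top Y_1/n - zI_p$ and $B = Y_2^\top Y_2/n - zI_p$, expand the difference as $B - A = [(Y_2 - Y_1)^\top Y_2 + Y_1^\top (Y_2 - Y_1)]/n$, and bound each resulting term using $\vertiii{\cal G(A)} \leq 1/\Im(z)$ together with the sharper estimate $\vertiii{Y\,\cal G(Y^\top Y/n - zI_p)} \leq \sqrt{n}\,|z|^{1/2}/\Im(z)$. This last bound is where the $|z|^{1/2}$ comes from and reduces to a one-variable calculus exercise on $\lambda/|\lambda - z|^2$ for $\lambda \geq 0$, which is maximized at $\lambda = |z|$ giving the value $|z|/\Im(z)^2$. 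Tracking the numerical constant $2^{3/2}$ is really the only "obstacle" here and is just careful bookkeeping through the two symmetric terms. Finally, item (10) is a Borel--Cantelli argument: the sub-Gaussian tail $\Pp(|X - \E X| \geq t\sigma_n) \leq C e^{-t^2/C}$ at $t = C'\sqrt{\log n}$ with $C'$ sufficiently large makes the probabilities summable in $n$, so $|X - \E X| \leq O(\sqrt{\log n}\,\sigma_n)$ almost surely.
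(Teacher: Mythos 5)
Your proposal is correct, and since the paper supplies no proof of this proposition --- it is a list of facts with pointers to \cite{LC21} and \cite{moi1} --- the right standard of comparison is whether your derivations recover those facts by reasonable means, which they do. Items (1), (2), (5)--(7), (9), (10) are handled cleanly, and your conditioning-then-union-bound argument for (3) is a standard route to the additivity of scales for independent vectors. For (8) your plan is sound: the identity $\cal G(A)-\cal G(B)=\cal G(A)(B-A)\cal G(B)$ with $B-A=\frac1n\big[(Y_2-Y_1)^\top Y_2+Y_1^\top(Y_2-Y_1)\big]$, combined with $\vertiii{\cal G}\le 1/\Im(z)$ and the spectral bound $\vertiii{Y\,\cal G(Y^\top Y/n-zI_p)}\le\sqrt{n\,|z|}/\Im(z)$ (which follows exactly from your one-variable optimisation $\sup_{\lambda\ge 0}\lambda/|\lambda-z|^2\le|z|/\Im(z)^2$), already yields the Lipschitz constant $2|z|^{1/2}/(\Im(z)^2\sqrt n)$, which is no worse than the $2^{3/2}$ constant quoted from \cite{moi1}; so the ``bookkeeping'' you defer is unnecessary.

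The one place you stop short is the last clause of item (4), ``Both definitions are equivalent in one dimension.'' You prove only the trivial implication (Lipschitz $\Rightarrow$ linear). The converse in dimension one does hold --- linear concentration around a scalar $Z_n$ controls $|X_n-Z_n|$ with a sub-Gaussian tail, and then for any $1$-Lipschitz $f_n$ one writes $|f_n(X_n)-\E f_n(X_n)|\le|f_n(X_n)-f_n(Z_n)|+|\E[f_n(Z_n)-f_n(X_n)]|\le|X_n-Z_n|+\E|X_n-Z_n|$, with $\E|X_n-Z_n|=O(\sigma_n)$ absorbed into the constant --- but you should say so explicitly, since the nontrivial direction is precisely the one the remark is highlighting.
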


We will also need a result for a product of matrices, and introduce for this reason a notion of conditional concentration.   If  $\cal B$ is a measurable subset of the universe $\Omega$ with $\Pp(\cal B)>0$, the random matrix $X$ conditioned with the event $\cal B$, denoted as $\pt{X|\cal B}$, designates the measurable mapping $\pt{\cal B,\cal F_{\cal B},\Pp/\Pp(\cal B)}\mapsto E_n$ satisfying: $\forall \omega \in \cal B$, $\pt{X|\cal B}(\omega) = X(\omega)$.

\begin{prop}[Proposition 9 in \cite{LC20}] \label{ConcProd}
    If $X$ and $Y$ are sequences of random matrices with dimensions bounded by $O(n)$, such that $(X,Y)\propto_{\vertif{\cdot}} \cal E(1) $,  $\E\br{{\vertiii{X}}}\leq O(\sqrt n)$ and $\E\br{{\vertiii{Y}}}\leq O(\sqrt n)$, then there exists a constant $c>0$ such that:
    \begin{enumerate}
        \item $\E\br{\vertiii{X}}\leq c \sqrt n$ and $\E\br{\vertiii{Y}}\leq c \sqrt n$.
        \item With $\cal B = \{\vertiii{X}$ and $\vertiii{Y}\leq 2 c \sqrt n\})$, $\Pp(\cal B^c )\leq ce^{-n/c}$.
        \item $\pt{XY | \cal B} \propto_{\vertif{\cdot}} \cal E(\sqrt n)$.
    \end{enumerate}
\end{prop}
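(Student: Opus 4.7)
I would begin with point (1), which is simply a restatement of the hypothesis $\E[\vertiii{X}] \leq O(\sqrt n)$: choose $c$ large enough so that both $\E[\vertiii{X}]/\sqrt n$ and $\E[\vertiii{Y}]/\sqrt n$ are bounded by $c$.

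For point (2), the key observation is that $M \mapsto \vertiii{M}$ is $1$-Lipschitz with respect to the Frobenius norm (since $\vertiii{\cdot} \leq \vertif{\cdot}$ on matrices). Applying item (4) of Proposition \ref{PropConc} to the real-valued $1$-Lipschitz function $X \mapsto \vertiii{X}$, together with the hypothesis $(X,Y) \propto_{\vertif{\cdot}} \cal E(1)$, yields a subgaussian tail bound
\begin{align*}
\Pp\bigl(\vertiii{X} \geq \E[\vertiii{X}] + t\bigr) \leq C e^{-t^2/C},
\end{align*}
and similarly for $Y$. Choosing $t = c\sqrt n$ and combining with the mean estimate $\E[\vertiii{X}] \leq c\sqrt n$ shows that each of $\{\vertiii{X} \geq 2c\sqrt n\}$ and $\{\vertiii{Y} \geq 2c\sqrt n\}$ has probability at most $C e^{-c^2 n/C}$, so a union bound and an adjustment of the constant give the desired bound on $\Pp(\cal B^c)$.

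For point (3), the strategy is to approximate the product $XY$ by a globally Lipschitz surrogate that agrees with $XY$ on $\cal B$. The product map is locally Lipschitz with controlled constant via
\begin{align*}
\vertif{XY - X'Y'} \leq \vertiii{Y}\,\vertif{X-X'} + \vertiii{X'}\,\vertif{Y-Y'},
\end{align*}
so I would introduce a truncation operator $T_R$ that, using the singular value decomposition, caps every singular value at $R = 2c\sqrt n$. By Mirsky's inequality, $T_R$ is $1$-Lipschitz from $(\vertif{\cdot})$ to itself, and $\vertiii{T_R(M)} \leq R$ pointwise. Thus the modified map $\Phi(X,Y) = T_R(X)\, T_R(Y)$ is globally $2R = O(\sqrt n)$-Lipschitz in the Frobenius norm, and the Lipschitz concentration of $(X,Y)$ propagates as $\Phi(X,Y) \propto_{\vertif{\cdot}} \cal E(\sqrt n)$ by item (2) of Proposition \ref{PropConc}.

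To conclude, I would transfer this concentration to $XY \mid \cal B$, using that $\Phi(X,Y) = XY$ identically on $\cal B$. For any $1$-Lipschitz $f$, the conditional deviations of $f(XY)$ around its conditional mean can be controlled by the unconditional deviations of $f(\Phi(X,Y))$, up to error terms of order $\Pp(\cal B^c)$ which are absorbed into $Ce^{-n/c}$ after enlarging the constant. I expect this final comparison to be the most delicate step, as one must carefully rewrite conditional means in terms of unconditional ones and exploit the exponential smallness of $\Pp(\cal B^c)$ relative to the Gaussian tails provided by the concentration of $\Phi(X,Y)$; everything else is a routine combination of Lipschitz propagation and union bounds.
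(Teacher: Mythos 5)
The paper does not prove this statement; it cites it verbatim as Proposition~9 of \cite{LC20}, so there is no in-paper proof to compare against. Your reconstruction follows the standard truncation strategy that is the natural route for such results, and the overall plan is sound.

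Two points deserve attention. First, in part (2) you invoke Proposition~\ref{PropConc}(4) (``Lipschitz concentration implies linear concentration''), but $\vertiii{\cdot}$ is not linear; the step you actually need is simply the definition of Lipschitz concentration (Definition~\ref{DefConc}(1)) applied to the $1$-Lipschitz scalar map $(X,Y)\mapsto\vertiii{X}$, which is cleaner and avoids a misattribution. Second, and more substantively, you justify that the singular-value cap $T_R$ is $1$-Lipschitz in the Frobenius norm by citing Mirsky's inequality. Mirsky gives Lipschitzness of the singular value \emph{vector} in $\ell^2$, not of the matrix map $A\mapsto U\min(\Sigma,R)V^\top$. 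The correct fact is that any $1$-Lipschitz scalar function induces a $1$-Lipschitz map on Hermitian matrices for the Hilbert--Schmidt norm (an operator-Lipschitz estimate via double operator integrals or the Kittaneh--Kosaki bound); one then transfers it to rectangular matrices through the Hermitian dilation $H(A)=\begin{pmatrix}0&A\\A^\top&0\end{pmatrix}$ after extending $\phi(t)=\min(t,R)$ oddly. That is the statement you need, and it is true, but Mirsky's inequality does not yield it. Finally, the conditional-to-unconditional transfer at the end is as you say the only delicate point: one first gets concentration of $XY$ around the \emph{unconditional} mean of $\Phi(X,Y)$ on the event $\cal B$ (dividing by $\Pp(\cal B)\geq 1/2$), then shows $\verti{\E[f(\Phi(X,Y))\mid\cal B]-\E[f(\Phi(X,Y))]}$ is $O(\sqrt n\cdot e^{-n/2c})$ by Cauchy--Schwarz and the exponential smallness of $\Pp(\cal B^c)$, which is absorbed by enlarging the constant in the exponential tail. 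With the Mirsky citation corrected as above, your argument is complete and is almost certainly the same as the proof in \cite{LC20}.
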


\subsection{Polynomial bounds in $z$ and notation $O_z(\epsilon_n)$}

Throughout this article we will deal with quantitative estimates involving both a dimension parameter $n \in \N$ and a spectral parameter $z \in \C^+$. As it turns out, for many applications it is not useful to track the exact dependence in $z$, which motivates the following definition:

\begin{defi}\label{admseq} Let $\zeta : \N \times \C^+ \to \R^+$ be a function and $\epsilon_n>0$ a sequence. We say that $\zeta$ is bounded by $\epsilon_n$ in $n$ and polynomially in $z$, and we note $\zeta(n,z) \leq O_z\pt{\epsilon_n}$, if there exists $\alpha\geq0$ such that uniformly in $z \in \C^+$ with bounded $\Im(z)$:
\[ \zeta(n,z) \leq O\pt{\epsilon_n \frac{|z|^\alpha}{\Im(z)^{2 \alpha}}}.\]
We say that a family of functions is uniformly $O_z\pt{\epsilon_n}$ bounded if the above bound holds uniformly for any function of the family.\end{defi}

\begin{rem} 

If $\Im(z)$ is bounded, then $\frac{|z|}{\Im(z)}\geq 1$ and $\frac 1 {\Im(z)}$ is bounded away from $0$. As a consequence, if $\alpha \geq \alpha'\geq 0$, then ${\frac{|z|^\alpha}{\Im(z)^{2 \alpha}}}\leq O\pt{\frac{|z|^{\alpha'}}{\Im(z)^{2 \alpha'}}}$. The classical rules of calculus thus apply to our notation: $O_z(\epsilon_n)+O_z(\epsilon_n')=O_z(\epsilon_n+\epsilon_n')$ and $O_z(\epsilon_n)O_z(\epsilon_n')=O_z(\epsilon_n\epsilon_n')$.
\end{rem}

The next technical lemma will be key to translate results like the deterministic equivalent Theorem \ref{DetEqOrig} into simplified versions using our $O_z(\epsilon_n)$ notations.
\begin{lem}\label{technicalLemmaO_z}
    If for some constants $\alpha_0, \alpha_1, \dots, \alpha_6 >0$, the function $\zeta$ satisfies an \emph{a priori} inequality $\verti{\zeta(n,z)}\leq O\pt{\frac{|z|^{\alpha_1}}{\epsilon_n^{\alpha_0}\Im(z)^{{\alpha_1}+{\alpha_2}}}} $, and if the bound: \[ \zeta(n,z) \leq O\pt{\epsilon_n \frac{|z|^{\alpha_3}}{\Im(z)^{{\alpha_3+{\alpha_4}}}}} \] holds uniformly for $z \in \C^+$ with bounded $\Im(z)$ and satisfying $\epsilon_n \frac{|z|^{\alpha_5}}{\Im(z)^{{\alpha_5+{\alpha_6}}}} \leq c$ for some constant $c>0$, then for some  exponent $\alpha>0$, the bound: \[ \zeta(n,z) \leq O\pt{\epsilon_n \frac{|z|^\alpha}{\Im(z)^{2 \alpha}}} \] holds uniformly in $z \in\C^+$ with bounded $\Im(z)$, that is $\zeta \leq O_z(\epsilon_n)$.
\end{lem}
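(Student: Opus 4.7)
The plan is to split $z \in \C^+$ with $\Im(z)$ bounded into two regimes according to the quantitative condition $\epsilon_n |z|^{\alpha_5}/\Im(z)^{\alpha_5+\alpha_6} \leq c$. In the regime where this condition holds, the conditional hypothesis directly yields a bound of the desired polynomial form, and we only have to absorb the various exponents into a single $\alpha$. In the opposite regime, the reverse inequality $\epsilon_n^{-1} < c^{-1} |z|^{\alpha_5}/\Im(z)^{\alpha_5+\alpha_6}$ gives us a handle to convert the pessimistic factor $\epsilon_n^{-\alpha_0}$ appearing in the a priori bound into a positive factor $\epsilon_n$, at the cost of picking up extra powers of $|z|$ and $1/\Im(z)$.

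\textbf{Regime 1 (conditional bound applies).} Here we directly have $\zeta(n,z) \leq O(\epsilon_n |z|^{\alpha_3}/\Im(z)^{\alpha_3+\alpha_4})$. Since $|z| \geq \Im(z)$ always and since $\Im(z)$ is bounded above, a quick case analysis according to whether $|z|$ and $\Im(z)$ are larger or smaller than one shows that any $\alpha_* \geq \max(\alpha_3, \alpha_4, \alpha_4-\alpha_3)$ yields the domination $|z|^{\alpha_3}/\Im(z)^{\alpha_3+\alpha_4} \leq O(|z|^{\alpha_*}/\Im(z)^{2\alpha_*})$, which gives the target inequality in this regime.

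\textbf{Regime 2 (a priori bound only).} Here the complementary inequality $1/\epsilon_n < c^{-1} |z|^{\alpha_5}/\Im(z)^{\alpha_5+\alpha_6}$ holds. The key algebraic manipulation is to write
\begin{equation*}
\frac{1}{\epsilon_n^{\alpha_0}} = \epsilon_n \cdot \frac{1}{\epsilon_n^{\alpha_0+1}} \leq O\pt{\epsilon_n \frac{|z|^{\alpha_5(\alpha_0+1)}}{\Im(z)^{(\alpha_5+\alpha_6)(\alpha_0+1)}}},
\end{equation*}
and to plug this into the a priori bound. This produces $|\zeta(n,z)| \leq O(\epsilon_n |z|^A/\Im(z)^B)$ for the explicit exponents $A = \alpha_1 + \alpha_5(\alpha_0+1)$ and $B = \alpha_1+\alpha_2+(\alpha_5+\alpha_6)(\alpha_0+1)$, and the same elementary comparison dominates this by $\epsilon_n |z|^{\alpha_{**}}/\Im(z)^{2\alpha_{**}}$ for any $\alpha_{**}$ large enough in terms of $A$ and $B$.

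\textbf{Conclusion and main obstacle.} Choosing $\alpha = \max(\alpha_*, \alpha_{**})$ and combining the two regimes gives $\zeta \leq O_z(\epsilon_n)$ uniformly over $z \in \C^+$ with bounded $\Im(z)$. The only genuine obstacle is the bookkeeping of exponents: one must check that the simple inequality $|z|^a/\Im(z)^b \leq O(|z|^\alpha/\Im(z)^{2\alpha})$ really does hold in all four sub-regimes determined by the sizes of $|z|$ and $\Im(z)$ relative to $1$. This is routine once one systematically exploits $|z| \geq \Im(z)$ together with the upper bound on $\Im(z)$, so no serious analytic difficulty is involved.
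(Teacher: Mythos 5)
Your proposal is correct and follows essentially the same two-regime dichotomy as the paper's proof: apply the conditional bound directly where its hypothesis holds, and otherwise use the reverse inequality to trade $\epsilon_n^{-\alpha_0}$ for $\epsilon_n$ times extra polynomial factors inside the a priori bound, then take $\alpha$ to be a maximum. (A tiny remark: in Regime 1, including $\alpha_4-\alpha_3$ in the max is superfluous since $\alpha_3,\alpha_4>0$ already forces $\alpha_4-\alpha_3<\alpha_4$, but this costs nothing.)
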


\begin{proof} Let us choose $\alpha=\max\pt{\alpha_5(\alpha_0+1) + \alpha_1, \alpha_6(\alpha_0+1)+\alpha_2,\alpha_3 ,\alpha_4}$.     If $\epsilon_n \frac{|z|^{\alpha_5}}{\Im(z)^{{\alpha_5+{\alpha_6}}}} \leq c$, we use the main inequality on $\zeta$:\begin{align*}
        \zeta(n,z) \leq O\pt{\epsilon_n \frac{|z|^{\alpha_3}}{\Im(z)^{{\alpha_3+{\alpha_4}}}}} \leq O\pt{\epsilon_n \frac{|z|^\alpha}{\Im(z)^{2 \alpha}}}.
    \end{align*}
 In the case where $\epsilon_n \frac{|z|^{\alpha_5}}{\Im(z)^{{\alpha_5+{\alpha_6}}}} \geq c$, we use the \emph{a priori} inequality on $\zeta$:
    \begin{align*}
      \verti{\zeta(n,z)}&\leq O\pt{\frac{|z|^{\alpha_1}}{\epsilon_n^{\alpha_0}\Im(z)^{{\alpha_1}+{\alpha_2}}}} \\
      &\leq O\pt{\epsilon_n \pt{\frac{|z|^{\alpha_5}}{c \,\Im(z)^{{\alpha_5+{\alpha_6}}}}}^{\alpha_0+1} \frac{|z|^{\alpha_1}}{\Im(z)^{{\alpha_1}+{\alpha_2}} }}\\
      &\leq O\pt{ \epsilon_n \frac{|z|^\alpha}{\Im(z)^{2\alpha}}}.
    \end{align*}
\end{proof}

\begin{rem}
    As we will see later in this article, the lemma is particularly suited to simplify some quantitative statements  that require the spectral parameter $z$ to be away from the real axis. Indeed the Stieltjes tranforms and resolvent matrices satisfy the classical bounds $\verti{g(z)}\leq 1 / \Im(z)$ and $\vertif{\cal G(z)}\leq \sqrt n \vertiii{\cal G(z)}\leq \sqrt n / \Im(z)$, which translate into \emph{a priori} inequalities for any difference of such objects.
\end{rem}Let us wrap this subsection by giving a general setting on which $O_z(\epsilon_n) $ bounds between Stieltjes transforms imply polynomial bounds in Kolmogorov distance. We remind our reader that for measures $\nu$ and $\mu$ with cumulative distribution functions  $\cal F_\nu$ and $\cal F_\mu$, the Kolmogorov distance is defined as $D(\mu,\nu) = \sup_{t \in \R}\verti{F_{\nu}(t) - \cal F_{\mu}(t)} $.
The following result is an immediate consequence of \cite[Theorem 3.6]{moi1}.
\begin{prop}\label{OzandKol}
Let $\mu_n$ and $\nu_n$ be sequences of probability measures on $\R^+$ such that:
\begin{enumerate}
    \item $\int_{\R} \verti{  \cal F_{\mu_n}(t) - \cal F_{\nu_n}(t)}dt < \infty $.
    \item $\cal F_{\nu_n}$ are uniformly Hölder continuous with exponent $\beta \in (0,1]$.
    \item $\mu_n$ and $\nu_n$ have uniformly bounded second moments.
    \item $\verti{g_{\mu_n}(z)-g_{\nu_n}(z)}\leq O_z(\epsilon_n)$ for some sequence $\epsilon_n \to 0$. \end{enumerate}
    Then there exists $\theta >0$ such that $D(\mu_n,\nu_n) \leq O(\epsilon_n^\theta)$.\end{prop}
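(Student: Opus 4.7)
The plan is to invoke \cite[Theorem 3.6]{moi1}, which provides a quantitative Stieltjes-transform based smoothing inequality converting pointwise control of $g_{\mu_n}-g_{\nu_n}$ along a horizontal line $z=x+i\eta$ into a bound on the Kolmogorov distance. The four assumptions are precisely those needed to apply that theorem: condition (1) guarantees that the improper integrals appearing in the smoothing inequality converge, (2) provides the H\"older regularity of the target measure $\nu_n$ needed to bound the smoothing cost, and (3) yields the uniform tail control that prevents the mass at infinity from spoiling the estimate. Condition (4) is then the main analytic input.

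First I would unpack the hypothesis $\verti{g_{\mu_n}(z)-g_{\nu_n}(z)} \leq O_z(\epsilon_n)$ into the explicit polynomial estimate
\begin{align*}
\verti{g_{\mu_n}(x+i\eta) - g_{\nu_n}(x+i\eta)} \leq C \, \epsilon_n \, \frac{(x^2+\eta^2)^{\alpha/2}}{\eta^{2\alpha}},
\end{align*}
valid for all $x \in \R$ and $\eta$ in a fixed bounded range, with $\alpha \geq 0$ the exponent provided by the definition of $O_z$. This is the only place where the hypothesis on the Stieltjes transforms enters.

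Next I would plug this bound into the smoothing inequality. Once integrated in $x$ over a suitably chosen window, whose width is controlled by combining the integrability condition (1) with the uniform second moment bound (3) (so that the tails beyond the window contribute at most a power of $\eta$), the Stieltjes transform integral yields a contribution of order $\epsilon_n / \eta^{2\alpha}$. The smoothing penalty contributes a term of order $\eta^\beta$ thanks to condition (2). Balancing the two by choosing $\eta \sim \epsilon_n^{1/(2\alpha+\beta)}$ then gives $D(\mu_n,\nu_n) \leq O(\epsilon_n^\theta)$ with explicit exponent $\theta = \beta/(2\alpha+\beta) > 0$.

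The proof is therefore conceptually immediate, and the main obstacle is bookkeeping: one must check that the polynomial blow-up $|z|^\alpha/\Im(z)^{2\alpha}$ of the Stieltjes transform difference near the real axis, coupled with the tail terms produced by the smoothing integral, remains harmless when $\eta$ is taken to be a small power of $\epsilon_n$. Since exactly this control is packaged into \cite[Theorem 3.6]{moi1}, the proposition follows as a direct corollary after optimizing~$\eta$.
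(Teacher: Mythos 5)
Your proposal takes exactly the same route as the paper: the statement is presented there as an ``immediate consequence of \cite[Theorem 3.6]{moi1}'', with no further argument given, and you likewise reduce the proposition to that cited theorem, supplying an outline of the Bai-type smoothing mechanism that underlies it. Your sketch is a reasonable account of why the cited theorem applies; the only caveat is that the explicit exponent $\theta = \beta/(2\alpha+\beta)$ you extract from a one-parameter balancing of $\eta$ may not match the true exponent, since the smoothing argument in \cite{moi1} (mirrored in Proposition~\ref{thmSigmaLKol} of the appendix) actually optimizes over a window parameter $A$ in addition to the imaginary part $y$, but this is harmless because the proposition only asserts the existence of some $\theta>0$.
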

\begin{rem}\label{remHoldtrick}
In some cases, the uniform Hölder continuity hypothesis may be adapted using a simple change of measures. For instance, given shape parameters $\gamma_n > 1$ and  measures $\tau_n$ supported on the same compact of $(0,\infty)$, the cumulative distribution function of $\nu_n = \mmp(\gamma_n) \boxtimes \tau_n$ are not even continuous at $0$. However we can consider the new measures $\check \nu_n = (1-\gamma_n) \cdot \delta_0 + \gamma_n \cdot \nu_n$, and use the fact that $\cal F_{\check \nu_n}$ are uniformly $1/2$-Hölder continuous to deduce the same result (see \cite[Section 8.3]{moi1}).

A variant of this result for empirical spectral distributions  may also be adapted, see Proposition \ref{thmSigmaLKol} in the Appendix.
\end{rem}

\newpage

\section{Covariance matrices of functions of Gaussian vectors}\label{Sec3}

 In this section we consider the random vector $y=f(u) \in \R^n$, obtained by applying a real function on each of the coordinates of a centered Gaussian vector $u \in \R^n$. We are particularly interested in the case where the entries of $u$ are weakly correlated. In this instance, we will give
 approximations of the  covariance matrix $\Sigma=\E\br{y y^\top}$, with a control of the error either entry-wise, or in spectral norm, or for their respective Stieltjes transforms. We first present the tools making these approximations possible.

\subsection{Hermite polynomials}  Let us remind the notation $\cal N$ for a standard Gaussian random variable. We denote by $\cal H$ the Hilbert space of real Borel functions such that $\E\br{f(\cal N)^2}<\infty$, endowed with the Gaussian inner product: 
\[<f,g> = \E\br{f(\cal N) g(\cal N)}= \frac 1 {\sqrt{2 \pi}}\int_\R f(t)g(t)e^{-t^2/2 } dt. \] Remark that all Lipschitz continuous functions automatically belong to $\cal H$. The $r$-th non-normalized Hermite polynomial is by definition:
\[h_r(t)=(-1)^r e^{t^2/2 }\frac{\rm{d}^r}{\rm{d}t^r}\pt{e^{-t^2/2 }}.\] The first Hermite polynomials are: $h_0 = 1$, $h_0 = \bf X$, $h_2 = \bf X^2 - 1$, and $h_3 = \bf X^3 - 3\bf X$. 
The $r$-th normalized Hermite polynomial is $\hat h_r = \frac {h_r}{\sqrt{r!}}$, and the $r$-th Hermite coefficient of a function $f \in \cal H$ is $\zeta_r(f)=<f,\hat h_r>$. Hereafter we remind some classical properties of Hermite polynomials without proofs. For  more details, we invite our reader to consult \cite[Chapter 4]{sansone1959orthogonal}.

\begin{prop}\label{hermite_nol_classic_nrop}
\begin{enumerate}[(i)]
    \item  $h_r$ are monic polynomials of degree $r$. $\hat h_r$ form a complete orthonormal basis of $\cal H$.
   \item Every function of $\cal H$ can be expanded as the converging sum in $\cal H$: $f = \sum_{r \geq 0} {\zeta_r(f)}  \hat h_r$. In particular $\vertih{f} ^2 = \E\br{f(\cal N)^2}=\sum_{r\geq 0} {\zeta_r(f)^2}$. \item The Hermite polynomials satisfy the relations:
    \begin{align*}
        h_k'(t)&=kh_{k-1}(t),\\
        h_{r+1}(t)&=t h_r(t)-h_r'(t).
    \end{align*}    
\end{enumerate}
\end{prop}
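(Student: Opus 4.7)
The plan is to establish the three items in order, deriving everything from the Rodrigues-type definition $h_r(t)=(-1)^r e^{t^2/2}(d/dt)^r(e^{-t^2/2})$. For the monic-polynomial part of (i), I would prove by induction using the identity $\frac{d}{dt}\pt{P(t)e^{-t^2/2}}=(P'(t)-tP(t))e^{-t^2/2}$ that $h_{r+1}(t)=th_r(t)-h_r'(t)$, which simultaneously dispatches the second recurrence of (iii) and shows that the leading coefficient (always $1$) is transmitted from step to step. For orthogonality $\langle h_r,h_s\rangle=r!\,\delta_{rs}$, I would integrate $\frac{1}{\sqrt{2\pi}}\int h_s(t)(-1)^r(d/dt)^r(e^{-t^2/2})\,dt$ by parts $r$ times (assuming $r\leq s$); the boundary terms vanish thanks to Gaussian decay, and the surviving integrand is proportional to $h_s^{(r)}(t)e^{-t^2/2}$, which is a polynomial of degree $s-r$ integrated against $e^{-t^2/2}/\sqrt{2\pi}$. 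Iterating yields $0$ when $s>r$, and for $s=r$ one gets $\int h_r^{(r)}(t)e^{-t^2/2}dt/\sqrt{2\pi}=r!$ since $h_r$ is monic of degree $r$. Dividing by $\sqrt{r!}$ normalizes the family to $\hat h_r$.

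For the first recurrence of (iii), $h_r'(t)=r\,h_{r-1}(t)$, I would combine the already-established $h_{r+1}=th_r-h_r'$ with the orthogonality relations to identify the coefficient of $h_{r-1}$ in the expansion of $h_r'$, or alternatively differentiate the Rodrigues formula directly and identify the resulting expression.

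The main obstacle I expect is the completeness half of (i), since Stone--Weierstrass does not apply on the unbounded line. My strategy is to show that any $f\in\cal H$ orthogonal to every polynomial $h_r$ (equivalently to every monomial, by the monic triangular structure) must vanish. Given $\int_\R f(t)t^k e^{-t^2/2}dt=0$ for all $k\geq 0$, the function $F(z)=\frac{1}{\sqrt{2\pi}}\int_\R f(t)e^{-t^2/2}e^{itz}dt$ is entire in $z$ because the Gaussian weight provides absolute convergence for every complex $z$, and its Taylor coefficients at the origin are the assumed-zero moments. Hence $F\equiv 0$, so by injectivity of the Fourier transform the $L^1$ function $f(t)e^{-t^2/2}$ vanishes, forcing $f=0$ in $\cal H$. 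This density of polynomials together with their expansion in the Hermite basis establishes completeness.

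Finally, item (ii) is a purely abstract consequence of (i): in any separable Hilbert space a complete orthonormal family yields the Fourier expansion $f=\sum_{r\geq 0}\langle f,\hat h_r\rangle\hat h_r$ converging in norm, together with Parseval's identity $\vertih{f}^2=\sum_{r\geq 0}\zeta_r(f)^2$. Since these statements appear in the paper only as a reminder, I would present the proof compactly, emphasizing only the Gaussian-weight ingredient that replaces the usual compact-support arguments.
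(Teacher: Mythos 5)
The paper deliberately states this Proposition \emph{without} proof: it is introduced as a reminder of classical facts, with a pointer to \cite{sansone1959orthogonal}. There is therefore no proof in the paper to compare against; what you supply is a self-contained argument, and it follows the standard route (Rodrigues formula, integration by parts, Fourier-analytic completeness, abstract Hilbert-space facts).

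The substance of your proof is sound, with one slip in the orthogonality computation. You integrate by parts $r$ times, using the Rodrigues form of the \emph{lower}-index polynomial $h_r$ (with $r\leq s$), landing on $\frac{1}{\sqrt{2\pi}}\int h_s^{(r)}(t)e^{-t^2/2}\,dt$, and then assert that ``iterating yields $0$ when $s>r$.'' As written this does not follow: after $r$ integrations by parts the Gaussian factor is no longer in Rodrigues-derivative form, so there is nothing left to ``iterate,'' and a polynomial of degree $s-r>0$ integrated against $e^{-t^2/2}$ is not zero in general (e.g.\ $\int t^2 e^{-t^2/2}\,dt\neq 0$). The clean fix is to integrate by parts $s$ times using the Rodrigues form of the \emph{higher}-index polynomial $h_s$: this produces $\frac{1}{\sqrt{2\pi}}\int h_r^{(s)}(t)e^{-t^2/2}\,dt$, which vanishes for $s>r$ simply because $\deg h_r=r<s$, and equals $r!$ for $s=r$ since $h_r$ is monic. (One could alternatively justify your version by first proving $h_k'=kh_{k-1}$ and noting $\int h_k e^{-t^2/2}\,dt=0$ for $k\geq 1$ directly from Rodrigues, but you place that relation after orthogonality, so that order would be circular.) Everything else — the inductive derivation of $h_{r+1}=th_r-h_r'$ and of monicity, the identification of the coefficient in $h_r'=\sum c_k h_k$ via orthogonality and the second recurrence, the entire-function/Fourier-injectivity argument for completeness (including the observation that Gaussian decay makes $F(z)$ entire on all of $\C$), and the reduction of (ii) to Parseval — is correct and is the standard textbook treatment.
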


We move on to more specific properties that shall be used in the course of this section. 

\begin{lem}[Lemma D.2 in \cite{nguyen2020global}]\label{hermite_nol_lem1} If $z_1$ and $z_2$ are standard Gaussian random variables, such that $(z_1,z_2)$ forms a Gaussian vector, then:
\[\E\br{\hat h_r(z_1) \hat h_s(z_2)}=\bf 1_{r=s} \Cov[z_1,z_2]^r.\]
\end{lem}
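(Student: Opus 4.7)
My plan is to prove the identity by the generating function method, which gives both orthogonality (the indicator $\mathbf 1_{r=s}$) and the value $\Cov[z_1,z_2]^r$ in a single computation.

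First I would recall the classical generating function identity for the non-normalized Hermite polynomials,
\[
\sum_{r\geq 0} \frac{t^r}{r!}\, h_r(x) \;=\; e^{tx - t^2/2},
\]
which follows from Proposition \ref{hermite_nol_classic_nrop}(iii) or directly from the Rodrigues-type definition given in the text. Setting $\rho = \Cov[z_1,z_2] = \E[z_1 z_2]$ (using that $z_1,z_2$ are centered and reduced), the plan is to compute
\[
\Phi(s,t) \;:=\; \E\!\left[ e^{s z_1 - s^2/2}\, e^{t z_2 - t^2/2} \right]
\]
in two different ways.

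On one hand, since $(z_1,z_2)$ is Gaussian with covariance matrix $\begin{pmatrix}1&\rho\\ \rho&1\end{pmatrix}$, the joint moment generating function gives $\E[e^{sz_1+tz_2}] = e^{(s^2+t^2+2st\rho)/2}$, so after cancellation $\Phi(s,t) = e^{st\rho}$, whose Taylor expansion is $\sum_{r\geq 0} \frac{(st\rho)^r}{r!}$. On the other hand, expanding both exponentials via the generating function identity and swapping the (locally uniformly convergent) sums with the expectation yields
\[
\Phi(s,t) \;=\; \sum_{r,s' \geq 0} \frac{s^r t^{s'}}{r!\, s'!}\, \E\!\left[ h_r(z_1)\, h_{s'}(z_2) \right].
\]
Matching coefficients of $s^r t^{s'}$ on both sides gives $\E[h_r(z_1)\, h_{s'}(z_2)] = \mathbf 1_{r=s'}\, r!\, \rho^r$, and dividing by $\sqrt{r!\,s'!}$ to pass to the normalized Hermite polynomials produces the desired formula.

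The only technical point deserving care is the justification for exchanging the double sum with the expectation; this follows from the fact that for fixed $s,t$ in a neighborhood of the origin the partial sums are dominated by the integrable function $e^{|s||z_1|+|t||z_2|}$ times a Gaussian density, so Fubini applies. Since the resulting identity between analytic functions of $(s,t)$ holds on an open neighborhood of the origin, the coefficient comparison is valid. I do not anticipate any real obstacle: the whole argument is a textbook generating-function manipulation, and the author's reference to \cite{nguyen2020global} suggests exactly this type of short derivation.
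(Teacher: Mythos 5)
The paper states this lemma without proof, citing it directly as Lemma D.2 of \cite{nguyen2020global} in a list of "classical properties of Hermite polynomials." So there is no in-paper argument to compare against; your task was effectively to reconstruct a proof of a black-boxed fact.

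Your generating-function argument is correct and is the standard way to derive this identity. The computation $\Phi(s,t)=e^{st\rho}=\sum_{r}\frac{(st\rho)^r}{r!}$ on one side, and $\sum_{r,s'}\frac{s^r t^{s'}}{r!\,s'!}\E[h_r(z_1)h_{s'}(z_2)]$ on the other, followed by coefficient matching and normalization by $\sqrt{r!\,s'!}$, gives exactly the stated identity. One small technical remark: the domination bound you invoke for the Fubini step is not quite the right one. The partial sums of $\sum_r \frac{s^r}{r!}h_r(x)$ are not obviously bounded by $e^{|s||x|}$ (that bounds the limit, not the partial sums). The clean justification is Cram\'er's inequality $|h_r(x)|\le c\,\sqrt{r!}\,e^{x^2/4}$, which gives a uniform bound $c\,e^{x^2/4}\sum_r |s|^r/\sqrt{r!}$ on the partial sums; one then checks that $\E\!\left[e^{z_1^2/4+z_2^2/4}\right]<\infty$ for the bivariate Gaussian with correlation $\rho\in[-1,1]$, which is a short positive-definiteness calculation. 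With that replacement, the argument is airtight. Alternatively, one could sidestep the analytic issue entirely by noting that $h_r(z_1)h_{s'}(z_2)$ is a polynomial in a Gaussian vector, hence all moments exist, and apply Isserlis/Wick directly — but that route is more combinatorial and loses the elegance of the single generating-function identity.
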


\begin{prop}\label{hermite_nol_lem2} 
    Let us fix $f \in \cal H$, and for $r \in \N$ let $\Psi_r (\sigma)= \sigma^{-r}\E\br{f\pt{\sigma \cal N }h_r(\cal N)}$.  Then $\Psi_r$ is $\cal C^\infty$ on $(0,\infty)$ and $\Psi_r'(\sigma)=\sigma\Psi_{r+2}(\sigma)$.
\end{prop}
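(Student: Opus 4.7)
The plan is to rewrite $\Psi_r(\sigma)$ as an integral of $f$ against successive $u$-derivatives of the centered Gaussian density $\phi_\sigma(u):=(\sigma\sqrt{2\pi})^{-1}e^{-u^2/(2\sigma^2)}$, and then to exploit a heat-equation type identity relating $\partial_\sigma$ and $\partial_u^2$ on $\phi_\sigma$. The Rodrigues-style recursion $h_{r+1}(t)=th_r(t)-h_r'(t)$ from Proposition~\ref{hermite_nol_classic_nrop} together with an induction on $r$ yields the key identity
\[ h_r(u/\sigma)\,\phi_\sigma(u)=(-\sigma)^r\,\partial_u^r\phi_\sigma(u). \]
Performing the change of variable $u=\sigma t$ in the definition of $\Psi_r$ and substituting the identity above gives the compact representation
\[ \Psi_r(\sigma)=(-1)^r\int_\R f(u)\,\partial_u^r\phi_\sigma(u)\,du. \]

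An elementary computation on $\phi_\sigma$ (equivalently, the substitution $v=\sigma^2$ combined with the classical heat equation $\partial_v\phi_\sigma=\frac{1}{2}\partial_u^2\phi_\sigma$) then yields the pointwise identity $\partial_\sigma\phi_\sigma(u)=\sigma\,\partial_u^2\phi_\sigma(u)$ on $(0,\infty)\times\R$. Combining it with the previous representation and differentiating under the integral sign immediately gives
\[ \Psi_r'(\sigma)=(-1)^r\int_\R f(u)\,\partial_\sigma\partial_u^r\phi_\sigma(u)\,du=(-1)^r\sigma\int_\R f(u)\,\partial_u^{r+2}\phi_\sigma(u)\,du=\sigma\,\Psi_{r+2}(\sigma). \]
The $C^\infty$ regularity of $\Psi_r$ on $(0,\infty)$ follows by iterating this identity: by induction the $k$-th derivative is a polynomial in $\sigma$ whose coefficients involve the functionals $\Psi_{r+2j}$ for $j\leq k$, each well-defined and continuous in $\sigma$ by the same integral representation.

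The only delicate point is the legitimacy of the differentiation under the integral sign. Since $\partial_\sigma\partial_u^r\phi_\sigma$ is a Gaussian density in $u$ multiplied by a Hermite polynomial in $u/\sigma$, together with explicit powers of $\sigma$, an appropriate dominating function is obtained, for $\sigma$ in a small compact neighborhood of any $\sigma_0>0$, by comparing $\phi_\sigma$ with $\phi_{\sigma_1}$ for some $\sigma_1$ slightly larger than the upper endpoint of this neighborhood. A Cauchy--Schwarz split then reduces the question to a uniform control of $\E[f(\sigma\cal N)^2]$ for $\sigma$ near $\sigma_0$; this is the routine but unavoidable step of the argument, since $f \in \cal H$ gives square-integrability only against $\phi_1$.
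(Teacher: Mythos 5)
Your proof is correct and takes a genuinely different route from the paper. The paper differentiates the expression $\sigma^{-(r+1)}h_r(t/\sigma)e^{-t^2/(2\sigma^2)}$ directly, feeding in the derivative identity $\frac{d}{dt}\pt{h_r(t)e^{-t^2/2}} = -h_{r+1}(t)e^{-t^2/2}$ and the three-term recurrence $t\,h_{r+1}(t) = h_{r+2}(t) + (r+1)h_r(t)$ until the index $r+2$ emerges. You instead package the same Rodrigues structure into the single identity $h_r(u/\sigma)\phi_\sigma(u) = (-\sigma)^r\partial_u^r\phi_\sigma(u)$ (which I verified) and then invoke the heat equation $\partial_\sigma\phi_\sigma = \sigma\,\partial_u^2\phi_\sigma$, so the two-step index shift is read off by commuting $\partial_\sigma$ through $\partial_u^r$ and trading it for $\sigma\,\partial_u^2$. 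This is more conceptual — it exposes why $\partial_\sigma$ costs exactly two Hermite orders — and it also gives $\cal C^\infty$ regularity by simple iteration of the identity, where the paper merely cites the Leibniz rule. You are also more explicit than the paper about the domination needed to differentiate under the integral. One caveat worth naming: your Cauchy--Schwarz split needs $f\in L^2(\phi_{\sigma_1})$ for some $\sigma_1$ slightly above the neighborhood of $\sigma_0$, and $f\in\cal H$ only guarantees $L^2(\phi_1)$; if $\sigma_1>1$ this is not automatic (e.g.\ $f(u)=e^{cu^2}$ with $1/(4\sigma_1^2)<c<1/4$). The same gap is implicit in the paper's own one-line appeal to Leibniz and is harmless in the applications at hand (there $f$ is Lipschitz, hence in $L^2(\phi_\sigma)$ for every $\sigma$, and $\sigma$ stays near $1$), so you are not introducing a new weakness, but it would be cleaner to state the local integrability assumption you are actually using.
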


\begin{proof} From the identity
    $\E\br{f(\sigma \cal N )h_r(\cal N)} = \frac 1 {\sigma \sqrt{2 \pi}}\int_\R f(t) h_r(t/\sigma) e^{-\frac{t^2}{2 \sigma^2}} dt$  and the Leibniz integral rule, we deduce that $\Psi_r$ is $\cal C^\infty$ and that:
    \begin{align*}
         \Psi_r'(\sigma) &= \frac{d}{d\sigma} \pt{\frac 1 {\sigma^{r+1} \sqrt{2 \pi}}\int_\R f(t) h_r(t/\sigma) e^{-\frac{t^2}{2 \sigma^2}} dt } \\
         &= \frac 1 {\sqrt{2 \pi}} \int_\R f(t)  \frac{d}{d\sigma} \pt{\frac{1}{\sigma^{r+1}} h_r(t/\sigma) e^{-\frac{t^2}{2 \sigma^2}}} dt.
    \end{align*}
The derivative inside the integral can be computed in a few steps. Firstly from the definition $\frac d {dt} \pt{h_r(t) e^{-t^2/2 }} = (-1)^r \frac { d^{r+1}}{dt^{r+1}}\pt{e^{-t^2/2}}= - h_{r+1}(t)  e^{-t^2/2 }$. Then applying the identity $t h_{r+1}(t)=h_{r+2}(t)+(r+1) h_{r}(t)$, which can be deduced from Proposition \ref{hermite_nol_classic_nrop}, leads to:
\begin{align*}
   \frac{d}{d\sigma} \pt{ h_r(t/\sigma) e^{-\frac{t^2}{2 \sigma^2}}} &= \frac t {\sigma^2} h_{r+1}(t/\sigma) e^{-t^2/2\sigma^2 } \\
    &= \frac 1 {\sigma} \pt{h_{r+2}(t/\sigma)+(r+1)h_r(t/\sigma)}e^{-\frac{t^2}{2 \sigma^2}},\\
     \frac{d}{d\sigma} \pt{\frac{1}{\sigma^{r+1}} h_r(t/\sigma) e^{-\frac{t^2}{2 \sigma^2}}}  &=  {\frac{1}{\sigma^{r+2}} \pt{h_{r+2}(t/\sigma)+(r+1)h_r(t/\sigma)} } e^{-\frac{t^2}{2 \sigma^2}} - \frac{r+1}{\sigma^{r+2}} h_r(t/\sigma)  e^{-\frac{t^2}{2 \sigma^2} }\\
        &= \frac{1}{\sigma^{r+2}} h_{r+2}(t/\sigma) e^{-\frac{t^2}{2 \sigma^2}}.
\end{align*} 
We obtain finally:
\begin{align*}
\Psi_r'(\sigma) &=\frac 1 {\sqrt{2 \pi}} \int_\R f(t) \frac{1}{\sigma^{r+2}} h_{r+2}(t/\sigma) e^{-\frac{t^2}{2 \sigma^2}}  dt \\
         &= \sigma \cdot \pt{\frac 1 {\sigma^{r+3} \sqrt{2 \pi}}\int_\R f(t) h_{r+2}(t/\sigma) e^{-\frac{t^2}{2 \sigma^2}} dt } \\
         &= \sigma \cdot \Psi_{r+2}(\sigma).
    \end{align*}
\end{proof}

\begin{cor}\label{Expansionzeta}
    Let $f \in \cal H$, and for $\sigma >0$ let $f_\sigma : t \mapsto f(\sigma t)$. Then for any integer $r \geq 0$:
    \begin{align*}
        \zeta_r(f_\sigma) 
        &= \sigma^r \pt{\zeta_r(f) + O(\sigma-1)}\\&= \sigma^r \pt{ \zeta_r(f) + \sqrt{(r+1)(r+2)} (\sigma - 1)\zeta_{r+2}(f) + O((\sigma - 1)^2)}.
    \end{align*}   
\end{cor}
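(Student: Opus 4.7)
The plan is to express $\zeta_r(f_\sigma)$ directly in terms of the function $\Psi_r$ of Proposition \ref{hermite_nol_lem2}, and then to Taylor-expand $\Psi_r$ at $\sigma = 1$. Concretely, by definition of the Hermite coefficients and of $\Psi_r$,
\[
\zeta_r(f_\sigma) = \frac{1}{\sqrt{r!}} \E\!\left[f(\sigma \cal N) h_r(\cal N)\right] = \frac{\sigma^r}{\sqrt{r!}} \, \Psi_r(\sigma),
\]
so everything reduces to understanding the behavior of $\Psi_r(\sigma)$ near $\sigma=1$. At $\sigma = 1$ we have $\Psi_r(1) = \E[f(\cal N) h_r(\cal N)] = \sqrt{r!} \, \zeta_r(f)$, which will recover the leading term.

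Next I would use Proposition \ref{hermite_nol_lem2}, which states that $\Psi_r$ is $\cal C^\infty$ on $(0,\infty)$ and that $\Psi_r'(\sigma) = \sigma \Psi_{r+2}(\sigma)$. Evaluating at $\sigma = 1$ gives
\[
\Psi_r'(1) = \Psi_{r+2}(1) = \sqrt{(r+2)!} \, \zeta_{r+2}(f).
\]
Since $\Psi_r$ is $\cal C^\infty$, the Taylor formula with integral remainder (or Peano remainder) yields $\Psi_r(\sigma) = \Psi_r(1) + (\sigma-1) \Psi_r'(1) + O((\sigma-1)^2)$ as $\sigma \to 1$, the big-$O$ being controlled by the local boundedness of $\Psi_r''$. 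Substituting the values of $\Psi_r(1)$ and $\Psi_r'(1)$ and dividing by $\sqrt{r!}$ gives
\[
\frac{\Psi_r(\sigma)}{\sqrt{r!}} = \zeta_r(f) + (\sigma-1) \sqrt{\tfrac{(r+2)!}{r!}} \, \zeta_{r+2}(f) + O((\sigma-1)^2),
\]
and since $\sqrt{(r+2)!/r!} = \sqrt{(r+1)(r+2)}$, multiplying by $\sigma^r$ yields the second displayed equality. The first, coarser expansion is obtained identically by only going to order one in the Taylor formula, using that $\Psi_r$ is $\cal C^1$ at $1$.

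I do not foresee a real obstacle: the whole argument is a one-line differential identity combined with a finite Taylor expansion, and all regularity needed is already provided by Proposition \ref{hermite_nol_lem2}. The only mild care required is to justify the uniformity of the $O$-terms as $\sigma \to 1$, which follows from the continuity (hence local boundedness) of the higher derivatives of $\Psi_r$ on $(0,\infty)$.
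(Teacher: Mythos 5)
Your proof is correct and matches the paper's argument: both reduce $\zeta_r(f_\sigma)$ to $\frac{\sigma^r}{\sqrt{r!}}\Psi_r(\sigma)$ and then apply a first- and second-order Taylor expansion of $\Psi_r$ at $\sigma=1$, using the identity $\Psi_r'(\sigma)=\sigma\Psi_{r+2}(\sigma)$ from Proposition \ref{hermite_nol_lem2}. Your version simply spells out a few intermediate evaluations (such as $\Psi_r(1)=\sqrt{r!}\,\zeta_r(f)$) that the paper leaves implicit.
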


\begin{proof} These expressions are straightforward consequences of first and second order Taylor expansions of $\Psi_r$. For instance for the second formula:
    \begin{align*}
        \zeta_r(f_\sigma)
        &= \frac{\sigma^r}{\sqrt{r!}} \Psi_r(\sigma) \\
        &= \frac{\sigma^r}{\sqrt{r!}} \pt{\Psi_r(1) + (\sigma - 1) \Psi_{r+2}(1) + O((\sigma - 1)^2) } \\
        &= \sigma^r \pt{ \zeta_r(f) + \sqrt{(r+1)(r+2)} (\sigma - 1)  \zeta_{r+2}(f) + O((\sigma - 1) ^2)}.
    \end{align*}
\end{proof}

\subsection{Iterated Hadamard products}

In what follows, $M^{\circ r}$ denotes the Hadamard product of $r$ copies of a matrix $M \in \R^{n \times n}$. We denote by $\rm{diag}(M)$ and $\rm{off}(M)$ respectively the diagonal and off-diagonal sub-matrices of $M$. $\vec{\rm{diag}}(M) \in \R^n$ denotes the diagonal elements of $M$ reshaped as a vector in $\R^n$.

\begin{lem} \label{CorrOffDelta} If $\Delta \in \R^{n \times n}$ is a symmetric matrix such that $I_n + \Delta$ is positive semi-definite, then for any integer $r \geq 1$: 
\begin{align*}     \vertiii{\Delta^{\circ r+1}} &\leq \pt{1+\vertii{\Delta}_{\max}} \vertiii{\Delta^{\circ r}}  + \vertii{\Delta}_{\max}^r  , \\
     \vertiii{\Delta^{\circ 2r}} &\leq \vertii{\Delta}_{\max} ^{2r-2}   \vertiii{\Delta \circ \Delta }  .   
\end{align*}
    
\end{lem}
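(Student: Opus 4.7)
The plan is to prove the two inequalities by completely different techniques, each a short application of a classical matrix fact.

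\textbf{First inequality.} The key step I would carry out is the telescoping identity
\[\Delta^{\circ(r+1)} = (I_n + \Delta) \circ \Delta^{\circ r} - I_n \circ \Delta^{\circ r},\]
obtained by writing $\Delta = (I_n+\Delta) - I_n$ and distributing the Hadamard product entry-wise (the off-diagonal matches $\Delta_{ij}^{r+1}$ trivially, and the diagonal check reduces to $(1+\Delta_{ii})\Delta_{ii}^r - \Delta_{ii}^r = \Delta_{ii}^{r+1}$). On the first summand I plan to apply the Schur product inequality: for any positive semi-definite matrix $A$ and any matrix $B$,
\[\vertiii{A \circ B} \leq \pt{\max_i A_{ii}} \vertiii{B},\]
see for instance Horn--Johnson, \emph{Topics in Matrix Analysis}, Theorem~5.5.18. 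Applied to $A = I_n + \Delta \succeq 0$, whose diagonal satisfies $\max_i (1+\Delta_{ii}) \leq 1 + \vertim{\Delta}$ (using $\max_i \Delta_{ii} \leq \vertim{\Delta}$), this yields the main term $(1+\vertim{\Delta})\vertiii{\Delta^{\circ r}}$. The second summand $I_n \circ \Delta^{\circ r}$ is the diagonal matrix $\mathrm{diag}(\Delta_{ii}^r)$, whose spectral norm equals $\max_i |\Delta_{ii}|^r \leq \vertim{\Delta}^r$. A triangle inequality closes the argument.

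\textbf{Second inequality.} For this bound the PSD hypothesis is not needed: both $\Delta^{\circ 2r}$ and $\Delta^{\circ 2}$ are symmetric matrices with non-negative entries, and the pointwise estimate
\[0 \leq (\Delta^{\circ 2r})_{ij} = (\Delta_{ij}^2)^r \leq \vertim{\Delta}^{2r-2}\,(\Delta^{\circ 2})_{ij}\]
is immediate. I would conclude by invoking the Perron--Frobenius monotonicity of the spectral norm on non-negative symmetric matrices: if $0 \leq B \leq A$ entry-wise with $A, B$ symmetric, then $\vertiii{B} \leq \vertiii{A}$. This follows because for symmetric non-negative matrices the spectral norm equals the spectral radius, and Gelfand's formula $\rho(M) = \lim_k \vertif{M^k}^{1/k}$ combined with the entry-wise monotonicity of $M \mapsto M^k$ on non-negative matrices preserves the inequality.

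The main obstacle is essentially bookkeeping: isolating the decomposition that makes the PSD hypothesis usable in the first inequality, and recognizing that the second is a purely non-negative statement requiring no PSD input. Neither step involves any real calculation once the two classical facts (Schur's inequality and Perron--Frobenius monotonicity) are invoked. It is worth noting that the two bounds are complementary: iterating the first gives only $(1+\vertim{\Delta})^r$-type growth, whereas the second exploits the genuine smallness of $\vertim{\Delta}$ when one is willing to reduce to the single even Hadamard power $\Delta^{\circ 2}$.
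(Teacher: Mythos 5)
Your proof is correct, and the first inequality is proved exactly as in the paper: decompose $\Delta^{\circ(r+1)} = (I_n+\Delta)\circ\Delta^{\circ r} - I_n\circ\Delta^{\circ r}$, apply the Schur product bound to the first term (the paper states it as $\vertiii{A\circ B}\leq\vertim{A}\vertiii{B}$ for $A\succeq 0$, which coincides with your $\max_i A_{ii}$ version since the maximal entry of a PSD matrix sits on the diagonal), and bound the diagonal remainder by $\vertim{\Delta}^r$. For the second inequality you share the paper's pointwise estimate $\Delta_{ij}^{2r}\leq\vertim{\Delta}^{2r-2}\Delta_{ij}^2$, but you then invoke the general monotonicity of the spectral norm on entry-wise ordered non-negative symmetric matrices, justified via Gelfand's formula, whereas the paper argues more concretely: it selects a non-negative unit vector $\alpha$ realising $\vertiii{\Delta^{\circ 2r}}=\vertii{\Delta^{\circ 2r}\alpha}$ (available because $\Delta^{\circ 2r}$ has non-negative entries) and then pushes the pointwise estimate through $\sum_i\bigl(\sum_j\Delta_{ij}^{2r}\alpha_j\bigr)^2$. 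The two routes are equivalent in content — the paper's extremal-vector computation is essentially an inline proof of the monotonicity fact you cite — so yours trades a short explicit calculation for a clean appeal to a standard principle. You are also right that the PSD hypothesis on $I_n+\Delta$ plays no role in the second bound, which is implicit in the paper's proof as well.
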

\begin{proof} For any matrices $A$ and $B \in \R^{n\times n}$ with $A$ positive semi-definite, the inequality $\vertiii{A \circ B} \leq \vertii{A}_{\max} \vertiii{B}$ holds true (\cite[Proposition 3.7.9.]{johnson1990matrix}). As a consequence:
\begin{align*}
    \vertiii{\Delta^{\circ r+1}} &\leq  \vertiii{(I_n+\Delta) \circ \Delta^{\circ r}} + \vertiii{I_n \circ \Delta^{\circ r}} \\
    &\leq \vertii{I_n+\Delta }_{\max} \vertiii{\Delta^{\circ r}}  + \vertiii{\rm{diag}\pt{\Delta}^r} \\
    &\leq \pt{1+ \vertii{\Delta }_{\max} }\vertiii{\Delta^{\circ r}}  + \vertii{\Delta}_{\max}^r  .\end{align*}
    
 For the second inequality, since $\Delta^{\circ 2r}$ has non negative entries, there is $\alpha \in \pt{\R^+}^n$ such that $\vertii{\alpha}=1$ and $\vertiii{\Delta^{\circ 2r}} = \vertii{\Delta^{\circ 2r} \alpha}$. We deduce that:
\begin{align*}
\vertiii{\Delta^{\circ 2r}}  &= \pt{ \sum_{i=1}^n \pt{\sum_{j=1}^n \Delta_{ij}^{2r} {\alpha_j}}^2}^{1/2} \\
&\leq  \vertii{\Delta}_{\max}^{2r-2} \pt{ \sum_{i=1}^n \pt{\sum_{j=1}^n\Delta_{ij}^{2} {\alpha_j}}^2\,}^{1/2} \\
&\leq  \vertii{\Delta}_{\max} ^{2r-2} \vertiii{\Delta \circ \Delta} .
\end{align*}
\end{proof}

\begin{prop}  \label{SpecHadam}If $\Delta \in \R^{n \times n}$ is a sequence of symmetric matrices such that $I_n + \Delta$ is positive semi-definite, $\vertii{\Delta}_{\max}$ converges to $0$ and  $\vertiii{\Delta}$ is bounded, then for any $r \geq 1$ the quantities  $\vertiii{\Delta^{\circ 2r}}$ , $\vertiii{\rm{off}(\Delta^{\circ 2r})}$, $\vertiii{\Delta^{\circ 2r+1}}$ and $\vertiii{\rm{off}(\Delta^{\circ 2r+1})}$ are all bounded by $ O\pt{\vertii{\Delta}_{\max}^{2r-2}}$ when $n$ goes to $\infty$.
\end{prop}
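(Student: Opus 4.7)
The plan is to chain together the two inequalities of Lemma \ref{CorrOffDelta} in a mechanical way, and handle the off-diagonal variants by subtracting off the diagonal part, whose spectral norm is trivially controlled.

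First I would settle the base case $\vertiii{\Delta \circ \Delta} = O(1)$. Applying the first inequality of Lemma \ref{CorrOffDelta} with $r=1$ gives $\vertiii{\Delta^{\circ 2}} \leq (1 + \vertii{\Delta}_{\max}) \vertiii{\Delta} + \vertii{\Delta}_{\max}$, and since $\vertiii{\Delta}$ is bounded and $\vertii{\Delta}_{\max} \to 0$, the right-hand side is $O(1)$. With this in hand, the second inequality of Lemma \ref{CorrOffDelta} immediately gives
\[
\vertiii{\Delta^{\circ 2r}} \leq \vertii{\Delta}_{\max}^{2r-2} \vertiii{\Delta \circ \Delta} \leq O\!\pt{\vertii{\Delta}_{\max}^{2r-2}}
\]
for all $r \geq 1$, which covers the even-exponent cases.

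For the odd exponent $2r+1$, I would plug the just-obtained bound on $\vertiii{\Delta^{\circ 2r}}$ back into the first inequality of Lemma \ref{CorrOffDelta}, this time used with exponent $2r$ on the right-hand side:
\[
\vertiii{\Delta^{\circ 2r+1}} \leq (1 + \vertii{\Delta}_{\max}) \vertiii{\Delta^{\circ 2r}} + \vertii{\Delta}_{\max}^{2r} \leq O\!\pt{\vertii{\Delta}_{\max}^{2r-2}},
\]
where the second term is absorbed into the first since $\vertii{\Delta}_{\max}^{2r} \leq \vertii{\Delta}_{\max}^{2r-2}$ once $\vertii{\Delta}_{\max}$ is bounded.

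Finally, for the off-diagonal versions, I would write $\rm{off}(\Delta^{\circ k}) = \Delta^{\circ k} - \rm{diag}(\Delta^{\circ k})$. The diagonal matrix $\rm{diag}(\Delta^{\circ k})$ has entries $\Delta_{ii}^k$, so its spectral norm is at most $\vertii{\Delta}_{\max}^k$. Combining with the triangle inequality and the bounds above,
\[
\vertiii{\rm{off}(\Delta^{\circ k})} \leq \vertiii{\Delta^{\circ k}} + \vertii{\Delta}_{\max}^k \leq O\!\pt{\vertii{\Delta}_{\max}^{2r-2}}
\]
for $k \in \{2r, 2r+1\}$, which concludes. There is no genuine obstacle here; the proof is essentially a bookkeeping exercise on top of Lemma \ref{CorrOffDelta}, and the only mild care needed is to notice that the base case $\vertiii{\Delta \circ \Delta} = O(1)$ requires invoking the boundedness of $\vertiii{\Delta}$ together with $\vertii{\Delta}_{\max} \to 0$.
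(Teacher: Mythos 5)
Your proof is correct and follows essentially the same route as the paper: the base case $\vertiii{\Delta\circ\Delta}=O(1)$ from the first inequality of Lemma \ref{CorrOffDelta}, the even case from the second inequality, the odd case by reapplying the first inequality, and the off-diagonal variants by subtracting the diagonal with $\vertiii{\rm{diag}(\Delta^{\circ k})}\leq \vertii{\Delta}_{\max}^k$. No substantive difference.
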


\begin{proof}
    According to the preceding lemma, $
        \vertiii{\Delta \circ \Delta} \leq \pt{1+\vertii{\Delta}_{\max}} \vertiii{\Delta}  + \vertii{\Delta}_{\max}      \leq O(1)$. This implies that: $\vertiii{\Delta^{\circ 2r}}\leq \vertii{\Delta}_{\max} ^{2r-2}   \vertiii{\Delta \circ \Delta } \leq O\pt{\vertii{\Delta}_{\max}^{2r-2}}$, and $\vertiii{\Delta^{\circ 2r+1}}\leq \pt{1+\vertii{\Delta}_{\max}} \vertiii{\Delta^{\circ 2r}}  + \vertii{\Delta}_{\max}^{2r} \leq  O\pt{\vertii{\Delta}_{\max}^{2r-2}}  $. The remaining results follow after remarking that $\vertiii{\rm{diag}(\Delta^{\circ r}) }\leq O\pt{\vertii{\Delta}_{\max}^r}$.\end{proof}

\subsection{General expansion of the covariance matrix $\Sigma$}

In the following paragraphs we are considering a function $f \in \bf H$, and $u \in \R^n$ a centered Gaussian vector with covariance matrix $S \in \R^{n \times n}$. The random vector $y=f(u)$ is obtained by applying the function $f$ entry-wise on $u$, and we let $\Sigma = \E\br{y y^\top}$. 
For $1 \leq i \leq n$, we denote by $f_i$ the function $f_i : t \mapsto f\pt{ S_{ii}^{1/2} \, t } $, and by $D_r \in \R^{n \times n}$ the diagonal matrix with entries ${{S_{ii}}^{-r/2}} {\zeta_r(f_i)}$. 

\begin{prop}\label{GeneralExpansionSigma} The convergence of the following sum holds entry-wise: \[ \Sigma = \sum_{r \geq 0}  D_r\,{ S^{\circ r} }D_r .\]
\end{prop}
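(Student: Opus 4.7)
The identity is an entry-wise statement, so the plan is to fix indices $1 \leq i,j \leq n$ and compute $\Sigma_{ij} = \E[f(u_i) f(u_j)]$ by reducing to a pair of standard Gaussians. Assuming (as is implicit in the definition of $D_r$) that $S_{ii},S_{jj}>0$, I would introduce the standardized variables $z_i = u_i / S_{ii}^{1/2}$ and $z_j = u_j / S_{jj}^{1/2}$. These form a centered Gaussian pair of marginal variance $1$, with covariance $\rho_{ij} = S_{ij}/(S_{ii} S_{jj})^{1/2}$, which lies in $[-1,1]$ by Cauchy--Schwarz. In these variables $f(u_i) = f_i(z_i)$ and $f(u_j) = f_j(z_j)$, so that
\[ \Sigma_{ij} = \E\bigl[ f_i(z_i) f_j(z_j)\bigr]. \]

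The next step is to use the Hermite expansions $f_i = \sum_{r \geq 0} \zeta_r(f_i) \hat h_r$ and similarly for $f_j$, which converge in $\cal H$ by Proposition \ref{hermite_nol_classic_nrop}. Writing $f_i^{(N)}$ and $f_j^{(N)}$ for the partial sums up to order $N$, I would first expand the finite bilinear form and apply Lemma \ref{hermite_nol_lem1} to get the orthogonality relation
\[ \E\bigl[ f_i^{(N)}(z_i) f_j^{(N)}(z_j)\bigr] = \sum_{r=0}^N \zeta_r(f_i) \zeta_r(f_j) \rho_{ij}^r. \]
Passing to the limit $N \to \infty$ on the left is a routine $\cal H$-continuity argument: for example
\[ \bigl| \E\bigl[(f_i - f_i^{(N)})(z_i) \cdot f_j(z_j) \bigr] \bigr| \leq \vertih{f_i - f_i^{(N)}} \cdot \vertih{f_j} \xrightarrow[N \to \infty]{} 0, \]
since $z_i,z_j$ are individually standard Gaussian. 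For the right-hand side, $|\rho_{ij}|\leq 1$ combined with another Cauchy--Schwarz in $\ell^2$ using $\sum_r \zeta_r(f_i)^2 = \vertih{f_i}^2 < \infty$ and the analogous bound for $f_j$ yields absolute convergence of the series. Hence
\[ \Sigma_{ij} = \sum_{r \geq 0} \zeta_r(f_i) \zeta_r(f_j) \rho_{ij}^r = \sum_{r \geq 0} \frac{\zeta_r(f_i)}{S_{ii}^{r/2}} \cdot S_{ij}^r \cdot \frac{\zeta_r(f_j)}{S_{jj}^{r/2}} = \sum_{r \geq 0} (D_r)_{ii} \, (S^{\circ r})_{ij}\, (D_r)_{jj}, \]
which is precisely the $(i,j)$-entry of $\sum_r D_r S^{\circ r} D_r$ since each $D_r$ is diagonal.

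The only genuine difficulty is the justification of the limit exchange, but this is immediate from the orthonormality of the $\hat h_r$ and the Cauchy--Schwarz inequality on the bilinear pairing; the degenerate case $S_{ii}=0$ (or $S_{jj}=0$), where $u_i$ is a.s.\ constant and the natural convention $(D_r)_{ii}=0$ for $r\geq 1$ applies, is trivial since then both sides reduce to $f(0)\,\E[f(u_j)]$.
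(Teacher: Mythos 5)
Your proof is correct and follows essentially the same route as the paper's: standardize $u_i,u_j$ to get a standard Gaussian pair, apply Lemma \ref{hermite_nol_lem1} to orthogonalize the Hermite double sum, and read off the entry-wise formula. The only difference is cosmetic: you spell out the limit-interchange via truncation and $\cal H$-continuity (and note the degenerate case $S_{ii}=0$), whereas the paper's proof presents the same computation as a direct chain of equalities.
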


\begin{proof} The random variables $\tilde u_i = S_{ii}^{-1/2} u_i$ are standard Gaussian random variables, and they form a Gaussian vector. Using Lemma \ref{hermite_nol_lem1}, we have:\begin{align*}
    \Sigma_{ij} &= \E\br{f_i(\tilde u_i)f_j(\tilde u_j)} \\ &= \sum_{r,s\geq0} {\zeta_r(f_i)}{\zeta_s(f_j)}\E\br{\hat h_r(\tilde u_i )\hat h_s(\tilde u_j)} \\
    &= \sum_{r\geq 0} {\zeta_r(f_i)\zeta_r(f_j) }\Cov[\tilde u_i,\tilde u_j]^r \\
    &= \sum_{r\geq 0} {\zeta_r(f_i)\zeta_r(f_j) } {S_{ii}}^{-r/2} {S_{jj}}^{-r/2} {S_{ij}}^r \\
    &= \sum_{r\geq 0}  \pt{D_r}_i\pt{D_r}_j (S^{\circ r})_{ij},
\end{align*} 
which proves the Proposition.
\end{proof}

\subsection{Approximation of $\Sigma$ for weakly correlated Gaussian vectors}
We will now explain how the expansion of $\Sigma$ given by Proposition \ref{GeneralExpansionSigma} can be simplified when $u$ has weakly correlated entries, in the sense that its covariance matrix $S$ is close to $I_n$. To this end we let $\Delta = S - I_n$, and:
\begin{align*}
  \Sigma_{\rm{approx}}= \vertih{f}^2 I_n + \frac {\zeta_2(f)^2}{2} \vec{\rm{diag}}(\Delta)\vec{\rm{diag}}(\Delta)^\top + \sum_{r=1,2,3}\zeta_r(f)^2 \Delta^{\circ r}
\end{align*}

\begin{ass} \label{AssFuncGaussVector2} 
\begin{enumerate}
    \item $f$ is Lipschitz and Gaussian centered, i.e. $\zeta_0(f)=\E\br{f(\cal N)}=0$.
     \item $\vertiii{\Delta}$ and $\vertii{\vec{\rm{diag}}(\Delta)}$ are bounded.
      \item $\vertii{\Delta}_{\max}$ converges to $0$.
\end{enumerate}\end{ass}
\begin{thm} \label{SigmaApprox}Under Assumptions    \ref{AssFuncGaussVector2}, $\vertiii{\Sigma - \Sigma_{\rm{approx}}}\leq O\pt{\vertii{\Delta}_{\max} }$. \end{thm}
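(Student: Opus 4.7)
The plan is to rewrite the expansion of Proposition \ref{GeneralExpansionSigma} in terms of the \emph{correlation} matrix $\tilde S = D_\sigma^{-1} S D_\sigma^{-1}$, where $D_\sigma = \rm{diag}_i(\sigma_i)$ with $\sigma_i = \sqrt{S_{ii}}$, and the diagonal matrices $\tilde D_r = \rm{diag}_i(\zeta_r(f_i))$. Since each $\tilde u_i = \sigma_i^{-1} u_i$ is standard Gaussian with $\Cov(\tilde u_i, \tilde u_j) = \tilde S_{ij}$, the same computation as in Proposition \ref{GeneralExpansionSigma} yields the equivalent expansion $\Sigma = \sum_{r \geq 0} \tilde D_r \tilde S^{\circ r} \tilde D_r$. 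Setting $\tilde \Delta = \tilde S - I_n$ (which has zero diagonal) and using that $\tilde S^{\circ 0}$ is the all-ones matrix while $\tilde S^{\circ r} = I_n + \tilde \Delta^{\circ r}$ for $r \geq 1$, this leads to
\begin{align*}
\Sigma = \rm{diag}_i(\vertih{f_i}^2) + \rm{off}(\vec{\zeta_0} \vec{\zeta_0}^\top) + \sum_{r \geq 1} \tilde D_r \tilde \Delta^{\circ r} \tilde D_r,
\end{align*}
where $\vec{\zeta_0} = (\zeta_0(f_i))_i \in \R^n$. Although $\zeta_0(f) = 0$ by Gaussian centering, the entries $\zeta_0(f_i) = \E[f(\sigma_i \cal N)]$ do \emph{not} vanish, and it is precisely this $r = 0$ contribution that will produce the rank-one $\vec{\rm{diag}}(\Delta)\vec{\rm{diag}}(\Delta)^\top$ term of $\Sigmaa$.

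Each of the three pieces is then matched with its counterpart in $\Sigmaa$ and the error bounded in spectral norm. The diagonal piece is handled by a direct Lipschitz estimate: $|\vertih{f_i}^2 - \vertih{f}^2| \leq L|\sigma_i - 1| \cdot O(1) = O(\vertii{\Delta}_{\max})$. For the rank-one piece, Proposition \ref{hermite_nol_lem2} applied at $\sigma = 1$ with $\Psi_0(1) = \zeta_0(f) = 0$ and $\Psi_0'(1) = \Psi_2(1) = \sqrt{2}\,\zeta_2(f)$ yields the sharp expansion $\zeta_0(f_i) = \frac{\zeta_2(f)}{\sqrt 2}\,\Delta_{ii} + O(\Delta_{ii}^2)$. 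Writing $\vec{\zeta_0} = \frac{\zeta_2(f)}{\sqrt 2}\,\vec{\rm{diag}}(\Delta) + \vec e$ with entry-wise bound $|e_i| = O(\Delta_{ii}^2)$, we obtain $\vertii{\vec e}^2 \leq O(\vertii{\Delta}_{\max}^2) \vertii{\vec{\rm{diag}}(\Delta)}^2 = O(\vertii{\Delta}_{\max}^2)$; expanding the outer product and controlling the discarded diagonal by $\max_i \zeta_0(f_i)^2 = O(\vertii{\Delta}_{\max}^2)$ then gives $\vertiii{\rm{off}(\vec{\zeta_0}\vec{\zeta_0}^\top) - \frac{\zeta_2(f)^2}{2}\vec{\rm{diag}}(\Delta)\vec{\rm{diag}}(\Delta)^\top} \leq O(\vertii{\Delta}_{\max})$.

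For the Hadamard sum, the plan is to split off the low orders $r \in \{1, 2, 3\}$ from the tail $r \geq 4$. For the tail, the uniform bound $|\zeta_r(f_i)| \leq \vertih{f_i} \leq O(1)$ together with Proposition \ref{SpecHadam} (which yields $\vertiii{\tilde \Delta^{\circ r}} \leq O(\vertii{\Delta}_{\max}^{r-2})$ for $r \geq 2$, with constants independent of $r$) reduces the problem to a geometric series of total spectral norm $O(\vertii{\Delta}_{\max}^2)$. For each $r \in \{1, 2, 3\}$, Corollary \ref{Expansionzeta} gives $\vertiii{\tilde D_r - \zeta_r(f) I_n} \leq O(\vertii{\Delta}_{\max})$, while the identity $\tilde \Delta^{\circ r} = D_\sigma^{-r} \rm{off}(\Delta^{\circ r}) D_\sigma^{-r}$ combined with $\vertiii{D_\sigma^{-r} - I_n} \leq O(\vertii{\Delta}_{\max})$ and the trivial estimate $\vertiii{\rm{diag}(\Delta^{\circ r})} \leq \vertii{\Delta}_{\max}^r$ yields $\vertiii{\tilde \Delta^{\circ r} - \Delta^{\circ r}} \leq O(\vertii{\Delta}_{\max})$. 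Expanding $\tilde D_r \tilde \Delta^{\circ r} \tilde D_r - \zeta_r(f)^2 \Delta^{\circ r}$ into a sum of three cross terms (one from each perturbation) and using these bounds yields an $O(\vertii{\Delta}_{\max})$ error for each low-order $r$; summing all pieces concludes. The main subtlety is the correct identification of the source of the rank-one term $\frac{\zeta_2(f)^2}{2} \vec{\rm{diag}}(\Delta)\vec{\rm{diag}}(\Delta)^\top$ inside $\Sigmaa$: its spectral norm is of order $1$, not $O(\vertii{\Delta}_{\max})$, so it cannot be treated as a remainder but must exactly match the $r = 0$ contribution $\vec{\zeta_0}\vec{\zeta_0}^\top$ that would be lost under a naive replacement of $\zeta_0(f_i)$ by $\zeta_0(f) = 0$.
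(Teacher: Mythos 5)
Your proof is correct and follows essentially the same route as the paper's: Hermite expansion of $\Sigma$, isolation of the $r=0$ rank-one piece, Taylor expansion of $\zeta_0(f_i)$ around $\sigma_i=1$ via Corollary \ref{Expansionzeta}, replacement of the diagonal factors by $\zeta_r(f)$ for $r\in\{1,2,3\}$, and a geometric-tail bound for $r\ge 4$ via Proposition \ref{SpecHadam}. The only cosmetic difference is that you pass to the correlation matrix $\tilde S$ and hence must additionally estimate $\vertiii{\tilde\Delta^{\circ r}-\Delta^{\circ r}}$, whereas the paper keeps the $S_{ii}^{-r/2}$ factors inside $D_r$ and absorbs that error into the single bound $\vertiii{D_r-\zeta_r(f)I_n}\le O(\vertim{\Delta})$.
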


\begin{proof}
Let $\epsilon_n = \vertii{\Delta}_{\max}$. Since $f$ is Lipschitz, $\vertii{f - f_i}_{L^2} = O\pt{\verti{1-S_{ii}^{1/2 }}}=O(\epsilon_n)$. We deduce that $\vertiii{\rm{diag}(\Sigma) - \vertii{f}I_n} = \max_{1 \leq i \leq n} \verti{\vertii{f_i} - \vertih{f}} \leq O(\epsilon_n)$. For the off-diagonal terms, let $\nu$ be the vector in $\R^n$ defined by its coordinates  $\nu_i=\zeta_0(f_i)$. Let also $\mu = \frac {\zeta_2(f)^2}{2} \vec{\rm{diag}}(\Delta)$. The decomposition:
\[\rm{off}(\Sigma) = \rm{off}\pt{\nu \nu^\top } + \sum_{r\geq 1} D_r \,    \rm{off}(\Delta^{\circ r})\, D_r\] follows easily from the expansion of $\Sigma$ given by Proposition \ref{GeneralExpansionSigma}. Since $\zeta_0(f)=0$ and $S_{ii}^{1/2} = 1 + \frac {\Delta_{ii}} 2 + O(\Delta_{ii}^2)$, using the second order expansion given by Corollary \ref{Expansionzeta} applied to the function $f$, uniformly in $i \in \lint 1, n\rint$ we have:
\begin{align*}
    \nu_i = \zeta_0(f_i)&=\sqrt 2 \pt{S_{ii}^{1/2} - 1}  \zeta_2(f) + O\pt{\pt{S_{ii}^{1/2} - 1}^2} \\
    &= \frac{\zeta_2(f)}{\sqrt 2}   \Delta_{ii} + O(\Delta_{ii}^2) \\
    &= \mu_i + O(\Delta_{ii}^2).
\end{align*} Therefore $\vertii{\nu - \mu}^2 =O\pt{ \sum_{i=1}^n  \Delta_{ii}^4}\leq O \pt{\max_{1 \leq i \leq n} \Delta_{ii}^2 } \vertii{\vec{\rm{diag}}(\Delta)}^2 \leq O(\epsilon_n^2)$, and:
\begin{align*}
    \vertiii{\rm{off}\pt{\nu \nu^\top} - \mu \mu^\top } &\leq \vertii{\nu-\mu}\pt{\vertii{\nu}+\vertii{\mu}}+\vertiii{ \rm{diag}\pt{\nu \nu^\top} }\leq O(\epsilon_n)
\end{align*}

For the remaining sum $\sum_{r\geq 1} D_r \,    \rm{off}(\Delta^{\circ r})\, D_r$, we use again that $\verti{\zeta_r(f_i)-\zeta_r(f) } \leq \vertih{f_i-f}\leq O(\epsilon_n)$, hence $\vertiii{D_r - \zeta_r(f) I_n} \leq O(\epsilon_n)$, and in particular $\vertiii{D_r}\leq O(1)$. Using Proposition \ref{SpecHadam}, for $r = 1,2$ or $3$:
\begin{align*}
    \vertiii{D_r \,    \rm{off}(\Delta^{\circ r})\, D_r - \zeta_r(f)^2 \Delta^{\circ r} }& \leq \vertiii{D_r}^2 \vertiii{\rm{diag}(\Delta^{\circ r})} \\
    &\quad + \vertiii{{D_r - \zeta_r(f) I_n}} \vertiii{\Delta^{\circ r}} \pt{ \vertiii{D_r} + \verti{\zeta_r(f)}  }  \\
    &\leq O(\epsilon_n).
\end{align*} Finally for the remaining terms,  using Proposition \ref{SpecHadam} for $r \geq 4$:
\begin{align*}
  \vertiii{\sum_{r\geq 4} D_r \,    \rm{off}(\Delta^{\circ r})\, D_r} &\leq \sum_{r \geq 2} O\pt{\epsilon_n^{2r-2}} \leq O\pt{\epsilon_n^2}.
\end{align*}
\end{proof}

\subsection{Linearization of $\Sigma$ in specific settings} Let:
\[ \Sigma_{\rm{lin}} = \vertih{f}^2 \, I_n + \zeta_1(f)^2 \Delta \]
The matrix $\Sigma_{\rm{lin}}$ is obtained by means of usual matrix operations, easy to handle with classical random matrix theory tools, in contrast to $\Sigma_{\rm{approx}}$ which involves Hadamard products. $\vertiii{ \Sigmaa - \Sigmal}$ may not converge to $0$ under the mere Assumptions \ref{AssFuncGaussVector2}. We can nonetheless identify conditions involving $\Delta$ and $f$, under which $\Sigmal$ is a good approximation of $\Sigma$ either entry-wise, or in spectral norm, or for their respective Stieltjes transforms.

\begin{prop} \label{SigmaLinNorm}Under Assumptions  \ref{AssFuncGaussVector2}, $\vertiii{\Sigma - \Sigmal}$ is bounded, and with $\epsilon_n = \zeta_2(f)^2 \vertim{\Delta}^2 + \zeta_3(f)^2 \vertim{\Delta}^3$:\begin{align*}
    \vertim{ \Sigma - \Sigmal}&\leq O\pt{\vertii{\Delta}_{\max}},\\
    \vertiii{\Sigma - \Sigmal} &\leq O\pt{\vertii{\Delta}_{\max}  + n \epsilon_n} ,\\
    \verti{g_\Sigma(z) - g_{\Sigmal}(z) } &\leq   \frac 1 {\Im(z)^2}{O\pt{\vertii{\Delta}_{\max}  + \sqrt n \epsilon_n}} .
\end{align*} \end{prop}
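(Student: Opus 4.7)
The plan is to decompose
\[ \Sigma - \Sigmal = \pt{\Sigma - \Sigmaa} + \pt{\Sigmaa - \Sigmal}, \]
use Theorem~\ref{SigmaApprox} to handle the first piece, and bound the second, completely explicit piece
\[ \Sigmaa - \Sigmal = \frac{\zeta_2(f)^2}{2}\vec{\rm{diag}}(\Delta)\vec{\rm{diag}}(\Delta)^\top + \zeta_2(f)^2\Delta^{\circ 2} + \zeta_3(f)^2\Delta^{\circ 3} \]
summand-by-summand in each of the three relevant norms, using throughout that $\verti{\zeta_r(f)}\leq\vertih{f} = O(1)$. The preliminary boundedness of $\vertiii{\Sigma - \Sigmal}$ follows by combining $\vertiii{\Sigma-\Sigmaa}\leq O(1)$ from Theorem~\ref{SigmaApprox}, the fact that $\vertii{\vec{\rm{diag}}(\Delta)}^2= O(1)$ by Assumption~\ref{AssFuncGaussVector2}, and $\vertiii{\Delta^{\circ 2}},\vertiii{\Delta^{\circ 3}}\leq O(1)$ by Lemma~\ref{CorrOffDelta} (or equivalently Proposition~\ref{SpecHadam}).

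For $\vertim{\Sigma-\Sigmal}$ I would use $\vertim{\cdot}\leq\vertiii{\cdot}$ to absorb the $\Sigma-\Sigmaa$ contribution into Theorem~\ref{SigmaApprox}, and observe that each entry of the three summands of $\Sigmaa-\Sigmal$ is trivially at most $O(\vertim{\Delta}^2+\vertim{\Delta}^3)\leq O(\vertim{\Delta})$. For $\vertiii{\Sigma-\Sigmal}$ I would invoke the crude inequalities $\vertiii{A}\leq n\vertim{A}$ on the Hadamard pieces and $\vertii{\vec{\rm{diag}}(\Delta)}^2\leq n\vertim{\Delta}^2$ on the rank-one piece, so that each contribution fits inside $n\zeta_r(f)^2\vertim{\Delta}^r\leq n\epsilon_n$.

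The Stieltjes estimate is the only step that requires actual thought: the spectral-norm bound $\verti{g_\Sigma - g_{\Sigmal}}\leq\vertiii{\Sigma-\Sigmal}/\Im(z)^2$ would only give $O(\vertim{\Delta}+n\epsilon_n)/\Im(z)^2$, losing a factor $\sqrt n$. Instead I would combine the resolvent identity $\cal G_\Sigma-\cal G_{\Sigmal}=\cal G_\Sigma(\Sigmal-\Sigma)\cal G_{\Sigmal}$ with $\verti{\Tr(XY)}\leq\vertif{X}\vertif{Y}$ applied to $X=\cal G_\Sigma$ and $Y=(\Sigmal-\Sigma)\cal G_{\Sigmal}$, using $\vertif{\cal G_\Sigma}\leq\sqrt n/\Im(z)$ and $\vertiii{\cal G_{\Sigmal}}\leq 1/\Im(z)$, to obtain the sharper
\[ \verti{g_\Sigma(z)-g_{\Sigmal}(z)} \leq \frac{\vertif{\Sigma-\Sigmal}}{\sqrt n\,\Im(z)^2}. \]
It then suffices to control $\vertif{\Sigma-\Sigmal}$ via the same decomposition through $\Sigmaa$: the first piece is $\vertif{\Sigma-\Sigmaa}\leq\sqrt n\vertiii{\Sigma-\Sigmaa}\leq O(\sqrt n\vertim{\Delta})$, while the three explicit pieces of $\Sigmaa-\Sigmal$ are bounded directly by $\vertif{\Delta^{\circ r}}^2=\sum_{ij}\Delta_{ij}^{2r}\leq n^2\vertim{\Delta}^{2r}$ and $\vertif{\vec{\rm{diag}}(\Delta)\vec{\rm{diag}}(\Delta)^\top}=\vertii{\vec{\rm{diag}}(\Delta)}^2\leq n\vertim{\Delta}^2$, each giving a Frobenius norm $\leq O(n\epsilon_n)$. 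Summing yields $\vertif{\Sigma-\Sigmal}\leq O(\sqrt n\vertim{\Delta}+n\epsilon_n)$, and dividing by $\sqrt n$ produces exactly the advertised $\vertim{\Delta}+\sqrt n\epsilon_n$.

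Overall the argument is essentially bookkeeping once Theorem~\ref{SigmaApprox} is taken as input; the one substantive idea is to bound the Stieltjes difference through the Frobenius rather than the spectral norm of $\Sigma-\Sigmal$, which is what saves the crucial factor of $\sqrt n$ and matches the form $\epsilon_n$ takes.
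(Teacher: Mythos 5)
Your proposal is correct and follows essentially the same route as the paper: the same decomposition through $\Sigmaa$, the same coarse $\vertiii{A}\leq n\vertim{A}$ estimate for the spectral-norm bound, and the same trace-resolvent identity $\verti{g_A(z)-g_B(z)}\leq\vertif{A-B}/(\sqrt n\,\Im(z)^2)$ for the Stieltjes estimate. The only cosmetic difference is that the paper applies the triangle inequality to the Stieltjes transforms and then the trace identity to each piece, whereas you apply the trace identity once to $\Sigma-\Sigmal$ and split the Frobenius norm; both orderings produce the identical $O(\vertim{\Delta}+\sqrt n\,\epsilon_n)/\Im(z)^2$ bound.
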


\begin{proof} We start from the expression:
\[ \Sigmaa - \Sigmal =\frac {\zeta_2(f)^2}{2} \vec{\rm{diag}}(\Delta)\vec{\rm{diag}}(\Delta)^\top + \zeta_2(f)^2 \Delta^{\circ 2}+\zeta_3(\tilde f)^2 \Delta^{\circ 3}\] The matrices $\vec{\rm{diag}}(\Delta)\vec{\rm{diag}}(\Delta)^\top$, $\Delta^{\circ 2}$ and $\Delta^{\circ 3}$ are all bounded in spectral norm, thus $\vertiii{\Sigmaa - \Sigmal}$ is bounded and from Theorem \ref{SigmaApprox} $\vertiii{\Sigma - \Sigmal}$ is always bounded. The bound $ \vertim{ \Sigmaa - \Sigmal}\leq O\pt{\epsilon_n} $ is immediate given the above expression for the difference of these matrices, and from Theorem \ref{SigmaApprox}:
\begin{align*}
    \vertim{ \Sigma - \Sigmal}&\leq \vertiii{\Sigma-\Sigmaa}+\vertim{ \Sigmaa - \Sigmal} \\
    &\leq O\pt{\vertii{\Delta}_{\max}  + \epsilon_n}\leq O\pt{\vertii{\Delta}_{\max}}.
\end{align*}The other bounds follow from classical matrix inequalities. Indeed: 
\begin{align*}
  \vertiii{\Sigma - \Sigmal} &\leq   \vertiii{\Sigma - \Sigmaa} +   \vertiii{\Sigmaa - \Sigmal} \\
  &\leq O\pt{\vertii{\Delta}_{\max}}   + n \vertim{\Sigmaa - \Sigmal}\\
    &\leq O\pt{\vertii{\Delta}_{\max}  + n \epsilon_n} .
\end{align*}

Finally for the inequality on Stieltjes transforms, for any $A,B \in \R^{n \times n}$ and $z \in \C^+$:
\begin{align*}
    \verti{g_A(z) - g_B(z) } &= \verti{ \frac 1 n \Tr\pt{\cal G_A(z) (B-A) \cal G_B(z)}} \\
    &\leq \frac 1 n \vertiii{\cal G_A(z) } \vertif{I_n} \vertif{B-A}\vertiii{\cal G_B(z) } \\
    &\leq \frac {\vertif{B-A}} {\sqrt n \Im(z)^2}  .
\end{align*}
We deduce that:
\begin{align*}
    \verti{g_\Sigma(z) - g_{\Sigmal}(z) } &\leq  \verti{g_\Sigma(z) - g_{\Sigmaa}(z) } +  \verti{g_\Sigmaa(z) - g_{\Sigmal}(z) } \\
    &\leq \frac {\vertif{\Sigma-\Sigmal}  } {\sqrt n \Im(z)^2}   +  \frac {\vertif{\Sigmaa-\Sigmal}  } {\sqrt n \Im(z)^2}   \\
    &\leq \frac {\vertiii{\Sigma-\Sigmal} } {\Im(z)^2} + \sqrt n  \frac {\vertim {\Sigmaa-\Sigmal} } { \Im(z)^2}  \\
     &\leq \frac 1 {\Im(z)^2} {O\pt{\vertii{\Delta}_{\max}  + \sqrt n \epsilon_n}} .
\end{align*}
   \end{proof}

\begin{rem} \label{SigmaLinNormCor} In practice, $\vertiii{\Sigma - \Sigmal}$ converges to $0$ provided one of the following additional hypothesis holds true:
\begin{itemize}
\item $\vertii{\Delta}_{\max}=o\pt{n^{-1/2}}$,
\item $\zeta_2(f)=0$ and $\vertii{\Delta}_{\max}=o\pt{n^{-1/3}}$,
\item $\zeta_2(f)=\zeta_3(f)=0$.
\end{itemize} 

For the Stieltjes transforms, $g_\Sigma(z) - g_{\Sigmal}(z)$ converges to $0$ pointwise, and thus $\Sigma$ and  $\Sigmal$ have the same limiting spectral distribution if it exists, provided one of the following additional hypothesis holds true:
\begin{itemize}
\item $\vertii{\Delta}_{\max}=o\pt{n^{-1/4}}$,
\item $\zeta_2(f)=0$ and $\vertii{\Delta}_{\max}=o\pt{n^{-1/6}}$,
\item $\zeta_2(f)=\zeta_3(f)=0$.
\end{itemize}
\end{rem}

\newpage

\section{Deterministic equivalent of sample covariance matrices} \label{Sec4}

In this section we recall the latest results about the deterministic equivalent of the resolvents of sample covariance matrices on which this article is based. These estimates were first established in \cite{LC21} with a convergence speed in $n$, and complemented with quantitative estimates in the spectral parameter $z$ in \cite{moi1}. In a second step, we will thoroughly study the properties of the deterministic equivalent matrices appearing in these results.

Given $\Sigma \in \R^{n \times n}$ a positive semi-definite matrix and a sequence of shape parameters $\gamma_n> 0$, we build the matrix function $\bf G_\boxtimes ^{ \Sigma} : \C^+ \to \C^{n \times n}$ from the following objects: \begin{align*}
    \nu^\Sigma &= \mmp\pt{\gamma_n} \boxtimes \mu_\Sigma, \\
    \check \nu^\Sigma &= (1-\gamma_n ) \cdot\delta_0 + \gamma_n\cdot  \nu^\Sigma ,\\
    l_{\check \nu^\Sigma}(z) &= -1/g_{\check \nu^\Sigma}(z) ,\\
    \bf G_\boxtimes ^{ \Sigma}(z) &= \pt{- z g_{\check \nu^\Sigma}(z) \Sigma - z I_n}^{-1}, \\
    &= z^{-1} l_{\check \nu^\Sigma}(z)  \cal G_L\pt{  l_{\check \nu^\Sigma}(z)  }.
\end{align*}

Let us state without proofs some useful properties of these objects. $\check \nu^\Sigma$ is always a true  probability measure (\cite[Lemma 6.1]{moi1}). The usual resolvent inequality $\vertiii{\bf G_\boxtimes ^\Sigma (z)} \leq 1 /\Im(z)$ holds (\cite[Lemma 5.1]{moi1}), and $\Tr\bf G_\boxtimes ^\Sigma(z) =n g_{\nu^\Sigma}(z)$ (\cite[Proposition 6.2]{moi1}). 

\begin{rem} The notation $\bf G_\boxtimes^\Sigma(z)$ is inspired from the free probability theory, which appears as the profound canvas hidden within the above definitions. Indeed it is not difficult to see that the pair $\pt{g_{\check \nu^\Sigma}(z), \bf G_\boxtimes^\Sigma(z)}$ satisfies the system of self-consistent equations:
    \begin{align*}
   g_{\check \nu^\Sigma}(z) &= \pt{-z -  z \frac {\gamma_n} n \Tr\pt{\bf G_\boxtimes^\Sigma(z) \Sigma }}^{-1} , \\
        \bf G_\boxtimes^\Sigma(z) &= \pt{ - z I_n - z g_{\check \nu^\Sigma}(z) \Sigma}^{-1}.
    \end{align*}   
    This may be rewritten as the operator-valued self-consistent equation: \[ z \cal H(z) = I_{n+1} + z \eta\pt{\cal H(z)} \cal H(z),\] where $\cal H(z) = \begin{pmatrix} g_{\check \nu^\Sigma}(z) & 0 \\ 0 & \bf G_\boxtimes^\Sigma(z) \end{pmatrix}$, and: \begin{align*}
     \eta: \C \oplus \C^{n \times n} &\to \C \oplus \C^{n \times n}, \\\begin{pmatrix} g & 0 \\
0 & G \end{pmatrix} &\mapsto \begin{pmatrix} \frac{ \gamma_n} n \Tr\pt{G \Sigma} & 0 \\
0 & g \Sigma \end{pmatrix} .
\end{align*} $z \cal H(z^2)$ thus corresponds to the resolvent of an operator-valued free semi-circular variable with covariance $\eta$, see \cite[Section 3.3]{far2006spectra}. In this sense the definition of $\bf G_\boxtimes^\Sigma(z)$ extends the notion of free convolution, and it should come as no surprise that such objects appear as deterministic equivalent of sample covariance matrices.\end{rem}

\subsection{General results} 
Let $Y \in \R^{d \times n}$ be a sequence of random matrices. The associated sample covariance matrix is $K = Y^\top Y/d$, and for $z \in \C^+$ we define its resolvent $\cal G_K(z) = \pt{K-zI_n}^{-1}$ and Stieltjes transform $g_K(z) =(1/n)  \Tr \cal G_{K}(z) $. From the expected covariance matrix $\Sigma = \E\br{K}$, we follow the above procedure to build the matrix function $\bf G_\boxtimes ^{ \Sigma}(z)$.

\begin{ass} \label{DetEqAss}
\begin{enumerate}
    \item  $Y\propto_{\vertif{\cdot}} \cal E(1)$, the rows of $Y$ are i.i.d. sampled from the distribution of a random vector $y$, with $\vertii{\E[y]}$ and $\vertiii{\E[y y^\top]}=\vertiii{\Sigma}$ bounded.
      \item The ratio $\gamma_n=  \frac  n d $ is bounded from above and away from $0$.
\end{enumerate}
\end{ass}

\begin{thm}{\cite[Theorem 2.3]{moi1}}\label{DetEqOrig} Uniformly under Assumptions \ref{DetEqAss}, there exists $C>0$ such that uniformly in $z \in \C^+$ with $\Im(z)$ bounded and $\frac{ |z|^{ 7 }}{\Im(z)^{16}}{} \leq n/C$, the following inequality holds:
\[ \vertif{\E[\cal G_K(z)] - \bf G_\boxtimes ^{\Sigma}(z) } \leq O\pt{ \frac{ |z|^{5/2}}{\Im(z)^9 n^{1/2}} }.\]
\end{thm}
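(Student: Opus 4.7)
The plan is to combine the concentration framework of Section~\ref{Sec2} with a Schur-complement analysis of the sample covariance model. The target is the fixed point $\bf G_\boxtimes^\Sigma(z)$ characterized by the pair of self-consistent equations recalled just before the theorem, so the natural strategy is to derive an approximate version of those equations satisfied by $\E[\cal G_K(z)]$ itself and then invoke stability of the fixed point.

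First I would establish concentration of the random resolvent. Since $Y \propto_{\vertif{\cdot}} \cal E(1)$ and the map $Y \mapsto \cal G_{Y^\top Y/d}(z)$ is $O(|z|^{1/2}/\Im(z)^2\sqrt n)$-Lipschitz in Frobenius norm by Proposition~\ref{prop202}(8), one obtains the linear concentration $\cal G_K(z) \in_{\vertif{\cdot}} \E[\cal G_K(z)] \pm \cal E(|z|^{1/2}/\Im(z)^2\sqrt n)$ and, after tracing, $g_K(z) \in \E[g_K(z)] \pm \cal E(|z|^{1/2}/\Im(z)^2 n)$. These are the probabilistic ingredients that eventually let one replace random quadratic forms of the type $y_i^\top \cal G_{K^{(i)}}(z) y_i/d$ by the deterministic trace $(\gamma_n/n)\Tr(\cal G_{K^{(i)}}(z)\Sigma)$ with Gaussian-tail error.

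The core algebraic step is a leave-one-row-out argument. Writing $K = K^{(i)} + y_i y_i^\top/d$ and applying Sherman--Morrison yields $\cal G_K(z) y_i = \cal G_{K^{(i)}}(z) y_i / \bigl(1 + y_i^\top \cal G_{K^{(i)}}(z) y_i/d\bigr)$. Combining this with the resolvent identity $z \cal G_K(z) = -I_n + Y^\top \tilde{\cal G}(z) Y/d$ (where $\tilde{\cal G}$ is the dual $d\times d$ resolvent), averaging over $i$, and replacing the random scalar denominators by their deterministic counterparts thanks to the concentration step, produces an approximate self-consistent equation
\begin{align*}
    \E[\cal G_K(z)] = \bigl(-z I_n - z\, \alpha_n(z)\, \Sigma\bigr)^{-1} + \cal R_n(z),
\end{align*}
where $\alpha_n(z)$ is a deterministic scalar close to $g_{\check \nu^\Sigma}(z)$, and $\vertif{\cal R_n(z)}$ is controlled by the concentration estimates above together with the hypotheses $\vertiii{\Sigma} = O(1)$ and $\vertii{\E y} = O(1)$ (which handle the contribution of the non-centred part of $y_i$). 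The analogous scalar equation for $\alpha_n$ is obtained by taking $(\gamma_n/n)\Tr(\Sigma \cdot)$ of this identity.

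The final step is a stability analysis of the pair of self-consistent equations defining $\bf G_\boxtimes^\Sigma$. Subtracting the exact fixed-point equations from the approximate one and inverting the linearized operator yields a bound of the form $\vertif{\E[\cal G_K] - \bf G_\boxtimes^\Sigma} \leq C(z) \cdot \vertif{\cal R_n}$, with a stability constant $C(z)$ polynomial in $|z|/\Im(z)$. The hypothesis $|z|^7/\Im(z)^{16} \leq n/C$ is precisely the threshold at which the implicit-function-type inversion closes, i.e.\ at which $\vertif{\cal R_n}$ is smaller than the radius of the basin of attraction of the true fixed point. Propagating the errors produces the announced bound $O\bigl(|z|^{5/2}/\Im(z)^9 \sqrt n\bigr)$.

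The main obstacle is the bookkeeping of the $z$-dependence throughout both the concentration step and the stability step: each resolvent factor contributes a $1/\Im(z)$, each matrix inversion a $|z|/\Im(z)^2$, and the stability argument inflates these exponents further; pinning down the sharp final exponents $5/2$ and $9$ requires careful tracking at each occurrence of Sherman--Morrison and each application of the stability estimate. A secondary difficulty is to work in Frobenius norm rather than trace or spectral norm, which forbids the naive $\sqrt n$ Cauchy--Schwarz loss and forces a fully matrix-valued analysis of the residual $\cal R_n$.
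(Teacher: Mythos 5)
This statement is cited verbatim from \cite[Theorem 2.3]{moi1}; the present paper does not reproduce a proof for it, so there is no in-paper argument to hold your sketch against line by line. What the paper does import from \cite{moi1} — the material recalled just before the proof of Theorem~\ref{72}, namely the domain $\bf D = \{\omega : \Im\omega \geq \Im z,\ \Im(z^{-1}\omega) \geq 0\}$, the semi-metric $d(\omega_1,\omega_2) = |\omega_1-\omega_2|/(\Im\omega_1\Im\omega_2)^{1/2}$, the contraction constant $k_{\cal F} = \frac{|z|/\Im(z)^2}{1+|z|/\Im(z)^2}$, and the quantitative fixed-point bound of \cite[Lemma~6.14]{moi1} — reveals the shape of the reference's stability argument: it is a Banach contraction on the \emph{scalar} self-consistent equation for the reciprocal Cauchy transform $l_{\check\nu^\Sigma}(z)$ (the one of Proposition~\ref{prop63}), not a linearization of a coupled matrix-scalar system. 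Once the scalar is pinned down, the matrix equivalent is recovered by substitution, since $\bf G_\boxtimes^\Sigma(z) = z^{-1}l_{\check\nu^\Sigma}(z)\,\cal G_\Sigma\pt{l_{\check\nu^\Sigma}(z)}$, so only the scalar fixed point needs a genuine stability estimate.

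Your sketch diverges precisely at that step: you propose to invert the linearized operator of the approximate self-consistent equation, an implicit-function-theorem style argument in the spirit of the anisotropic local law literature, whereas the reference uses a contraction estimate with an explicit constant that is strictly below one on all of $\bf D$. Both routes are legitimate in principle, but they locate the hypothesis $|z|^7/\Im(z)^{16}\leq n/C$ differently: in the contraction approach it is exactly the threshold guaranteeing $k_{\cal F}\pt{1+d(b,\cal F(b))}<1$ so that Lemma~6.14 applies, and the exponents $5/2$ and $9$ in the final bound fall out of combining the Lipschitz constant $|z|^{1/2}/\Im(z)^2\sqrt n$ of Proposition~\ref{prop202}(8) with $k_{\cal F}$. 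A linearization would require its own quantitative bound on the inverse of the derivative of the fixed-point map, and would likely not reproduce these precise exponents without reinventing the contraction estimate in disguise. Your leave-one-out derivation of the approximate self-consistent equation and your use of Proposition~\ref{prop202}(8) for resolvent concentration are entirely in keeping with the Louart--Couillet framework that \cite{moi1} follows, so those ingredients are plausibly the right ones; the genuine gap is the stability mechanism, where you should replace linearization by the explicit contraction bound on $\bf D$ if you want to land on the stated exponents.
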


\begin{rem} By "{uniformly under Assumptions \ref{DetEqAss}}", we mean that the implicit constants in the result only depend on the constants chosen in the assumptions. Said otherwise, if a family of random matrices satisfies Assumptions \ref{DetEqAss} uniformly, then the results holds uniformly for any matrix of this family. 

Keeping track of the exponents appearing in this theorem will become more and more difficult as we go further into this article. We choose for this reason to work with a weaker notion of approximation using our concept of $O_z(\epsilon_n)$ polynomial bounds. As a side effect, we will see that we do not need anymore to specify conditions on $z$ on which the approximation holds true.\end{rem} 

\begin{thm}[Simplified version of the Deterministic Equivalent]\label{DetEqSimpl} Uniformly under Assumptions \ref{DetEqAss}, the following concentration properties hold true:
\begin{enumerate}
    \item $g_K(z) \propto  \cal E\pt{O_z(1/ n)}$ and $\cal G_K(z) \propto_{\vertif{\cdot}}   \cal E\pt{O_z(1/\sqrt n)}$.
    \item $g_K(z) \in g_{\nu}(z) \pm \cal E\pt{O_z(1/n)}$ and $\cal G_K(z) \in_{\vertif{\cdot}}  \bf G_\boxtimes ^{\Sigma}(z)  \pm \cal E\pt{O_z(1/\sqrt n)}$.
\end{enumerate}
\end{thm}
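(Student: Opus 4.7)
The plan is to combine the quantitative deterministic-equivalent bound of Theorem~\ref{DetEqOrig} with the general concentration tools of Proposition~\ref{prop202}, using Lemma~\ref{technicalLemmaO_z} to absorb the restrictions on $z$ appearing in Theorem~\ref{DetEqOrig} into the polynomial-in-$z$ factor encoded by the $O_z$ notation.

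\textbf{Part (1).} Since $Y\propto_{\vertif{\cdot}} \cal E(1)$ and $\gamma_n=n/d$ is bounded, $K$ can be written as $\tilde Y^\top \tilde Y / n$ for the rescaled matrix $\tilde Y = \sqrt{n/d}\,Y$, which still satisfies $\tilde Y \propto_{\vertif{\cdot}} \cal E(1)$. Applying item (8) of Proposition~\ref{prop202} to $\tilde Y$ yields the Lipschitz concentration $\cal G_K(z) \propto_{\vertif{\cdot}} \cal E\pt{|z|^{1/2}/(\Im(z)^2 \sqrt n)} = \cal E(O_z(1/\sqrt n))$, and item (9) then immediately upgrades this to $g_K(z) \propto \cal E(O_z(1/n))$.

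\textbf{Part (2).} Set $\zeta(n,z) = \vertif{\E[\cal G_K(z)] - \bf G_\boxtimes^\Sigma(z)}$. The trivial bounds $\vertiii{\E[\cal G_K(z)]}, \vertiii{\bf G_\boxtimes^\Sigma(z)} \leq 1/\Im(z)$ together with $\vertif{M} \leq \sqrt n\, \vertiii{M}$ provide the a priori estimate $\zeta(n,z) \leq 2\sqrt n / \Im(z)$, while Theorem~\ref{DetEqOrig} provides the refined bound $\zeta(n,z) \leq O\pt{|z|^{5/2}/(\Im(z)^9 \sqrt n)}$ under the restriction $|z|^7/\Im(z)^{16}\leq n/C$. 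Rewriting this restriction as $\epsilon_n |z|^{7/2}/\Im(z)^8 \leq 1/\sqrt C$ with $\epsilon_n = 1/\sqrt n$, and matching the remaining exponents, Lemma~\ref{technicalLemmaO_z} upgrades the refined bound to $\zeta(n,z) \leq O_z(1/\sqrt n)$ uniformly in $z \in \C^+$ with bounded $\Im(z)$.

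Combining Part (1) with item (4) of Proposition~\ref{prop202} gives $\cal G_K(z) \in_{\vertif{\cdot}} \E[\cal G_K(z)] \pm \cal E(O_z(1/\sqrt n))$, and item (5) then replaces the center by $\bf G_\boxtimes^\Sigma(z)$, producing the resolvent statement. For the Stieltjes transform, applying the $1/\sqrt n$-Lipschitz map $M \mapsto (1/n)\Tr(M)$ gives both $g_K(z) \in \E[g_K(z)] \pm \cal E(O_z(1/n))$ (via item (4)) and the bound $\verti{\E[g_K(z)] - g_{\nu^\Sigma}(z)} \leq \zeta(n,z)/\sqrt n \leq O_z(1/n)$, so a second application of item (5) concludes. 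The main obstacle is purely notational: one must carefully match the exponents to the template of Lemma~\ref{technicalLemmaO_z} so that the polynomial restriction on $z$ in Theorem~\ref{DetEqOrig} can be traded for the polynomial-in-$z$ factor built into the $O_z$ bound.
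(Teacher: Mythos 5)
Your proof is correct and follows the paper's own strategy exactly: establish the $O_z(1/\sqrt n)$ bound on $\vertif{\E[\cal G_K(z)] - \bf G_\boxtimes^\Sigma(z)}$ by combining the $2\sqrt n/\Im(z)$ a priori bound with Theorem~\ref{DetEqOrig} via Lemma~\ref{technicalLemmaO_z}, then derive both concentration statements from items (4), (5), (8), (9) of Proposition~\ref{prop202}. You have merely spelled out the details that the paper's terse two-sentence proof leaves implicit (the explicit exponent matching for the lemma, the rescaling $\tilde Y=\sqrt{n/d}\,Y$, and passing from the resolvent estimate to the Stieltjes-transform estimate via the trace map and the identity $\Tr\bf G_\boxtimes^\Sigma(z)=n\,g_{\nu^\Sigma}(z)$).
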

\begin{proof} From Theorem \ref{DetEqOrig}, Lemma \ref{technicalLemmaO_z} and the \emph{a priori} bound $\vertif{\E[\cal G_K(z)] - \bf G_\boxtimes ^{\Sigma} (z) } \leq 2 \sqrt n / \Im(z) $, we have $\vertif{\E[\cal G_K(z)] - \bf G_\boxtimes ^{\Sigma}(z) } \leq O_z(1/\sqrt n)$. The general properties of concentration recalled in Proposition \ref{PropConc} prove the Theorem.
\end{proof}

Similarly to \cite[Corolaries 2.5 and 2.8]{moi1}, we may deduce the following spectral properties of $K$. We will not prove this result immediately, but rather prompt our reader to consult the proof of Corollary \ref{CorDetAnn} which is extremely similar.

\begin{cor} \label{DetEqCor}Uniformly under Assumptions \ref{DetEqAss} :
\begin{enumerate}
\item    $\verti{g_K(z) - g_{\nu^\Sigma}(z)} \leq O_z\pt{ \sqrt{\log n}/n}$ and  $\vertim{{\cal G_K (z)- \bf G_\boxtimes^\Sigma(z) }}  \leq O_z\pt{ \sqrt{\log n/n}}$. 
   \item     
If the eigenvalues of $\Sigma$ are bounded from below, there exists $\theta >0$ such that $D(\mu_K, \nu^\Sigma) \leq O(n^{-\theta}) $  a.s. 
\item  If additionally $\mu_\Sigma \to \mu_\infty$ weakly and $\gamma_n \to \gamma_\infty$, then $\mu_K \to \nu_\infty = \mmp(\gamma_\infty) \boxtimes \mu_\infty$  weakly a.s., and more precisely: 
      \[ D(\mu_K, \nu_\infty) \leq D(\mu_\Sigma,\mu_\infty) + O(|\gamma_n-\gamma_\infty|)+O(n^{-\theta}) \quad \rm{a.s.}\]   
\end{enumerate}

\end{cor}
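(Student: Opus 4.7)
The plan is to chain three ingredients: the deterministic equivalent of Theorem \ref{DetEqSimpl}, the almost-sure deviation bound in Proposition \ref{PropConc}, and the Stieltjes-to-Kolmogorov transfer of Proposition \ref{OzandKol} (or rather its empirical-measure variant Proposition \ref{thmSigmaLKol} invoked in Remark \ref{remHoldtrick}).

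For part (1), Theorem \ref{DetEqSimpl} gives $g_K(z) \in g_{\nu^\Sigma}(z) \pm \cal E(O_z(1/n))$, and since linear and Lipschitz concentration coincide in dimension one, the almost-sure deviation bound of Proposition \ref{PropConc} costs only an additional $\sqrt{\log n}$ factor, yielding the scalar estimate. For the entry-wise bound, each coordinate projection $M \mapsto M_{ij}$ is a $1$-Lipschitz linear functional with respect to the Frobenius norm, so the linear concentration $\cal G_K(z) \in_{\vertif{\cdot}} \bf G_\boxtimes^\Sigma(z) \pm \cal E(O_z(1/\sqrt n))$ yields a sub-Gaussian tail of the right size on each entry. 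A union bound over the $n^2$ entries combined with Borel--Cantelli (choosing the constant in the tail bound large enough to make the series summable) gives the claimed $O_z(\sqrt{\log n/n})$ almost-sure bound in the max norm.

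For part (2), armed with the almost-sure Stieltjes estimate $\verti{g_K(z) - g_{\nu^\Sigma}(z)} \leq O_z(\sqrt{\log n}/n)$, I would invoke Proposition \ref{thmSigmaLKol}. Its hypotheses must be verified on an event of probability one: supports are uniformly bounded by concentration of $\vertiii{K}$, second moments are controlled via $\frac{1}{n}\Tr(K^2)$ and its a.s.\ boundedness, and the H\"older continuity of $\cal F_{\nu^\Sigma}$ (or of $\cal F_{\check \nu^\Sigma}$ when $\gamma_n > 1$, as in Remark \ref{remHoldtrick}) stems from the lower bound on $\Sp \Sigma$. The conclusion of the proposition then reads $D(\mu_K, \nu^\Sigma) \leq O(n^{-\theta})$ almost surely for some $\theta > 0$. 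This step is the main obstacle: the Stieltjes bound controls the random measure $\mu_K$ only pointwise in $z$, and translating it into a uniform Kolmogorov estimate requires careful bookkeeping, especially around the atom at zero that both $\mu_K$ and $\nu^\Sigma$ carry when $\gamma_n > 1$, which is absorbed by passing to $\check \mu_K$ and $\check \nu^\Sigma$.

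For part (3), I would split $D(\mu_K, \nu_\infty) \leq D(\mu_K, \nu^\Sigma) + D(\nu^\Sigma, \nu_\infty)$, handling the first term via (2) and treating the second deterministically. The regularity of the multiplicative free convolution with Mar\v cenko-Pastur established in Theorem \ref{72}, applied to the pairs $(\gamma_n, \mu_\Sigma)$ and $(\gamma_\infty, \mu_\infty)$, yields $\verti{g_{\nu^\Sigma}(z) - g_{\nu_\infty}(z)} \leq O_z\pt{D(\mu_\Sigma, \mu_\infty) + \verti{\gamma_n - \gamma_\infty}}$, which then transfers back to a Kolmogorov bound through Proposition \ref{OzandKol}, completing the estimate.
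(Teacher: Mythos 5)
Your part (1) matches the paper's argument exactly (which is in fact the proof of the analogous Corollary \ref{CorDetAnn}, to which the paper defers): linear concentration in Frobenius norm of $\cal G_K(z)$ around $\bf G_\boxtimes^\Sigma(z)$, the fact that $M \mapsto M_{ij}$ is $1$-Lipschitz linear in Frobenius norm, a sub-Gaussian tail, a union bound over $n^2$ entries, and Borel--Cantelli. That part is fine.

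For part (2), however, you cite Proposition \ref{thmSigmaLKol}, which is the wrong tool. That proposition compares two deterministic symmetric matrices whose \emph{spectral-norm} difference vanishes; here you only have a pointwise Stieltjes-transform bound between $\mu_K$ and the deterministic measure $\nu^\Sigma$, and there is no matrix whose ESD is $\nu^\Sigma$ that you could put in spectral-norm proximity to $K$. The hypotheses you actually go on to verify (bounded second moments, uniform Hölder continuity of $\cal F_{\nu^\Sigma}$ or of $\cal F_{\check \nu^\Sigma}$, integrability of $\cal F_{\mu_K}-\cal F_{\nu^\Sigma}$) are precisely those of Proposition \ref{OzandKol}, which is what the paper invokes. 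This reads as a mislabelling rather than a conceptual error, but as written the step does not go through.

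Part (3) contains a genuine gap. You propose to bound $D(\nu^\Sigma,\nu_\infty)$ by chaining Theorem \ref{72} with Proposition \ref{OzandKol}. Two obstructions: (i) Theorem \ref{72} as stated compares $\mmp(\gamma_n)\boxtimes\mu_\Sigma$ and $\mmp(\gamma_n)\boxtimes\tau$ with the \emph{same} shape parameter $\gamma_n$; it gives no control over the dependence on $\gamma$, so the $O(|\gamma_n-\gamma_\infty|)$ term in the statement cannot be extracted this way. (ii) Even for the measure part, you would first have to convert $D(\mu_\Sigma,\mu_\infty)$ into a Stieltjes bound (costing a factor $1/\Im(z)$, fine), then apply Theorem \ref{72}, then re-convert to Kolmogorov distance via Proposition \ref{OzandKol}, which introduces a Hölder-type exponent $\theta<1$. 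The result would be $O\bigl(D(\mu_\Sigma,\mu_\infty)^\theta\bigr)$, strictly weaker than the claimed $D(\mu_\Sigma,\mu_\infty)$ term in the corollary. The paper avoids both issues by appealing directly to the Kolmogorov-Lipschitz continuity of $\boxtimes$ from \cite[Proposition 4.13]{bercovici1993free}, which yields
\begin{align*}
D(\nu^\Sigma,\nu_\infty) \leq D\bigl(\mmp(\gamma_n),\mmp(\gamma_\infty)\bigr) + D\bigl(\mu_\Sigma,\mu_\infty\bigr) \leq O\bigl(|\gamma_n-\gamma_\infty|\bigr) + D\bigl(\mu_\Sigma,\mu_\infty\bigr)
\end{align*}
in one step, with the correct linear dependence and the correct treatment of the shape parameter.
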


\subsection{Regularity of the Stieltjes transform with respect to the free convolution}

Let ${\Sigma}\in \R^{n \times n}$ be a sequence of positive semi-definite matrices and $\tau$ probability measures such that $g_{\Sigma}(z)-g_{\tau}(z)$ converges to $0$ pointwise in $\C^+$. Then $\mu_{\Sigma}$ and $\tau$ converge weakly to the same limit if it exists. The free multiplicative convolutions $\nu= \mmp({\gamma_n})\boxtimes \mu_{\Sigma}$ and $\chi = \mmp({\gamma_n}) \boxtimes \tau$ also converge weakly to the same limit, which is equivalent to  their Stieltjes transforms $g_{\nu}(z)$ and $g_{\chi}(z)$ having the same limit pointwise. In this section we would like to refine this result by quantifying the convergence of $g_{\nu}(z) - g_{\chi}(z)$ to $0$. 

In the upcoming paragraphs, we always consider shape parameters $\gamma_n$ that are bounded from above and away from $0$, like in Assumptions \ref{DetEqAss}. We remind our reader that we are dealing with sequences of matrices, measures, and complex functions, even if we sometimes omit the indices $n$ and $z$ for a better readability.

\begin{thm} \label{72} Let ${\Sigma} \in \R^{n \times n}$ be deterministic positive semi-definite matrices, and $\tau$ deterministic probability measures supported on $\R^+$.

If $\verti{g_{\Sigma}(z)-g_\tau(z)}\leq O_z(\epsilon_n)$, then $\verti{g_{\nu}(z)-g_{\chi}(z)} \leq O_z(\epsilon_n)$.
\end{thm}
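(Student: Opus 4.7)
The strategy is to derive a self-consistent bound for $g_\nu - g_\chi$ via the subordination identity for the free multiplicative convolution with a Marchenko-Pastur distribution, and then close the bootstrap by invoking Lemma \ref{technicalLemmaO_z}. With $l_\nu(z) = z/(1 - \gamma_n - \gamma_n z g_\nu(z))$ and $l_\chi(z)$ defined analogously (these are the functions $l_{\check\nu^\Sigma}$ and $l_{\check\chi^\tau}$ of the excerpt), the Marchenko-Pastur self-consistent equation recalled in the introduction rewrites as the exact subordination
\begin{align*}
g_\nu(z) = \frac{l_\nu(z)}{z}\, g_{\mu_\Sigma}\bigl(l_\nu(z)\bigr), \qquad g_\chi(z) = \frac{l_\chi(z)}{z}\, g_\tau\bigl(l_\chi(z)\bigr),
\end{align*}
which is just the trace of the formula $\bf G_\boxtimes^\Sigma(z) = z^{-1} l_{\check\nu^\Sigma}(z)\, \cal G_\Sigma\pt{l_{\check\nu^\Sigma}(z)}$ combined with $\Tr \bf G_\boxtimes^\Sigma(z) = n g_{\nu^\Sigma}(z)$. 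Inserting an intermediate term $l_\nu g_\tau(l_\nu)$, the difference splits as
\begin{align*}
g_\nu(z) - g_\chi(z) \;=\; \frac{l_\nu}{z}\bigl[\, g_\Sigma(l_\nu) - g_\tau(l_\nu) \,\bigr] \;+\; \frac{1}{z}\bigl[\, l_\nu g_\tau(l_\nu) - l_\chi g_\tau(l_\chi) \,\bigr].
\end{align*}

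The first ``hypothesis'' term is controlled directly by applying the assumption $\verti{g_\Sigma(w) - g_\tau(w)} \le O_w(\epsilon_n)$ at $w = l_\nu$. For the second ``smooth'' term, $w \mapsto w g_\tau(w)$ is holomorphic on $\C^+$ with $\verti{(wg_\tau(w))'} \le 1/\Im(w) + \verti{w}/\Im(w)^2$, so the mean value inequality on a segment joining $l_\chi$ to $l_\nu$ in $\C^+$ bounds this term by such a factor times $\verti{l_\nu - l_\chi}$. A direct manipulation of $\alpha_\nu - \alpha_\chi = -\gamma_n z(g_\nu - g_\chi)$ with $\alpha_\cdot = z/l_\cdot$ then produces the key identity
\begin{align*}
l_\nu - l_\chi \;=\; \gamma_n\, l_\nu\, l_\chi\, (g_\nu - g_\chi),
\end{align*}
and assembling everything yields a self-consistent inequality of the shape $\verti{g_\nu - g_\chi} \le P(z)\,\epsilon_n + Q(z)\, \verti{g_\nu - g_\chi}$, where $P(z)$ and $Q(z)$ are polynomial expressions in $\verti{z}$, $1/\Im(z)$, $\verti{l_\nu}$, $\verti{l_\chi}$, $1/\Im(l_\nu)$ and $1/\Im(l_\chi)$.

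To promote this into a genuine $O_z(\epsilon_n)$ bound, polynomial-in-$z$ control of $l_\nu$ and $l_\chi$ is needed. The lower bounds $\Im(l(z)) \ge \Im(z)$ and $\verti{l(z)} \ge \Im(z)$ come for free because $\check \nu$ and $\check \chi$ are probability measures on $\R^+$ (Lemma~6.1 of \cite{moi1}), so the F-transforms $l = -1/g_{\check\mu}$ are Nevanlinna functions mapping $\C^+$ into $\C^+$ with the standard Krein lower bound. The main obstacle, which I expect to require the most care, is the matching polynomial \emph{upper} bound $\verti{l(z)} \le \mathrm{poly}(\verti{z},1/\Im(z))$. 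This can be extracted from the defining identity $l\,(1 - \gamma_n - \gamma_n l\, g_\Sigma(l)) = z$ together with the trivial a priori estimate $\verti{g_\Sigma(l)} \le 1/\Im(l) \le 1/\Im(z)$, which forces $\verti{\alpha} = \verti{z/l}$ to be bounded below by a polynomial in $\Im(z)/\verti{z}$.

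Once $P(z)$ and $Q(z)$ are polynomial in $\verti{z}$ and $1/\Im(z)$, the proof concludes via Lemma \ref{technicalLemmaO_z}. In the regime where the feedback coefficient $Q(z) \le 1/2$, absorbing yields $\verti{g_\nu - g_\chi} \le 2 P(z)\,\epsilon_n \le O_z(\epsilon_n)$. In the complementary regime, the trivial a priori bound $\verti{g_\nu - g_\chi} \le 2/\Im(z)$, combined with the fact that $Q(z) > 1/2$ forces a polynomial in $\verti{z}/\Im(z)$ to be large, is precisely of the form required by Lemma \ref{technicalLemmaO_z} to conclude $\verti{g_\nu - g_\chi} \le O_z(\epsilon_n)$. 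The lemma is custom-tailored to stitch these two regimes into a single uniform bound, completing the argument.
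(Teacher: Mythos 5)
The overall architecture you propose---express $g_\nu$ and $g_\chi$ via the subordination $g_\nu(z) = z^{-1}l_\nu(z)\,g_{\mu_\Sigma}(l_\nu(z))$, split off a hypothesis term and a smooth term, use the identity $l_\nu - l_\chi = \gamma_n\,l_\nu l_\chi\,(g_\nu - g_\chi)$, derive a self-consistent inequality, and close with Lemma~\ref{technicalLemmaO_z}---is in the same family as the paper's argument, which also works with the reciprocal Cauchy transforms $l_{\check\nu}, l_{\check\chi}$ and the self-consistent equation $l = \cal F(l) = z + \gamma_n l + \gamma_n l^2 g_\Sigma(l)$, establishes the same $O_z(1)$ bounds on $l$ (Lemma~\ref{lem66}), and closes with the same bootstrap lemma. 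Your splitting and your algebraic identity are correct, and your treatment of the hypothesis term (evaluating the assumption at $w = l_\nu$ and using the composition rule from Lemma~\ref{lem66}) is sound.

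The gap is in the closing step, and it is not cosmetic. Your self-consistent inequality has the form $\verti{g_\nu - g_\chi} \leq P(z)\,\epsilon_n + Q(z)\,\verti{g_\nu - g_\chi}$ with a feedback coefficient $Q(z)$ that, by the mean-value bound you quote, scales like $\gamma_n\,\verti{l_\nu}\verti{l_\chi}\,\bigl(1/\Im(z) + \verti{l_\cdot}/\Im(z)^2\bigr)/\verti{z}$. Using $\Im(z) \leq \verti{l_\cdot} \leq O(\verti{z}/\Im(z))$, this $Q(z)$ is a polynomial in $\verti{z}/\Im(z)^2$ that exceeds $1$ on a \emph{fixed} region near the real axis --- a region that does not shrink as $n \to \infty$. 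Absorption therefore fails on a fixed neighbourhood of the real line, exactly where the estimate is supposed to be interesting. Worse, Lemma~\ref{technicalLemmaO_z} cannot rescue you there: its complementary regime must be of the form $\epsilon_n\,\verti{z}^{\alpha_5}/\Im(z)^{\alpha_5+\alpha_6} \geq c$, because the trick in its proof writes $1 \leq \bigl(\epsilon_n\,\mathrm{poly}(z)/c\bigr)^{\alpha_0+1}$ to convert an $\epsilon_n$-free a priori bound into an $\epsilon_n\cdot\mathrm{poly}(z)$ bound. Your bad regime $\{Q(z) > 1/2\}$ is a condition on $z$ alone, with no $\epsilon_n$; the a priori bound $\verti{g_\nu - g_\chi} \leq 2/\Im(z)$ cannot be promoted to $O_z(\epsilon_n)$ on a set where only a fixed polynomial in $z$ is large.

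What the paper does that you would need is to close the loop via the contraction mapping $\cal F$ acting on $l$ in the semi-metric $d(\omega_1,\omega_2) = \verti{\omega_1-\omega_2}/\bigl(\Im(\omega_1)^{1/2}\Im(\omega_2)^{1/2}\bigr)$ on the domain $\bf D = \{\Im(\omega)\geq\Im(z),\ \Im(z^{-1}\omega)\geq 0\}$. In this metric $\cal F$ is $k_{\cal F}$-Lipschitz with $k_{\cal F} = \frac{\verti{z}/\Im(z)^2}{1+\verti{z}/\Im(z)^2}$, which is \emph{strictly} less than $1$ for every $z \in \C^+$; this gives a quantitative fixed-point stability estimate (\cite[Lemma~6.14]{moi1}) whose applicability condition $k_{\cal F}(1 + d(l_{\check\chi},\cal F(l_{\check\chi}))) < 1$ reduces to $\epsilon_n\,\mathrm{poly}(z) \leq c$ precisely because $d(l_{\check\chi},\cal F(l_{\check\chi}))$ is $\epsilon_n$-sized (Proposition~\ref{prop65}). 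That $\epsilon_n$-dependence of the bad regime is what makes Lemma~\ref{technicalLemmaO_z} applicable; the Euclidean mean-value estimate you use instead lacks it. To repair your argument you would either need to reproduce the semi-metric contraction (at which point you are essentially writing the paper's proof), or find an independent reason why $Q(z)$ is bounded by $1 - c/(1+\verti{z}/\Im(z)^2)$, which the naive derivative bound for $w \mapsto w g_\tau(w)$ does not give.
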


The proof of this result may be decomposed in several steps. First we translate the definition of the measures $\nu$ and $\chi$ into appropriate self-consistent equations on their reciprocal Cauchy transforms (Proposition \ref{prop63}). Then we see that $\chi$ is an approximate fixed point of the equation corresponding to $\nu$ (Proposition \ref{prop65}). Finally we use the stability of these self-consistent equations and the tools developed in \cite{LC21} and \cite{moi1} to conclude.

The key function in the upcoming paragraphs  
$ l_{\check \nu^{\Sigma}}(z)=-1/g_{\check \nu}(z)$ is known as the reciprocal Cauchy transform of the measure $\tilde \nu^{\Sigma}$. As such some classical properties of this function may be found in the seminal book \cite[Section 3.4]{mingo2017free}. We will nonetheless provide short proofs for all the properties we need in this article.

    \begin{prop}[Self-consistent equation for reciprocal Cauchy transforms]\label{prop63} If  $\mu$ is a measure supported on $\R^+$, we let $\nu=\mmp({\gamma_n}) \boxtimes \mu$, $\check \nu = (1-{\gamma_n}) \cdot \delta_0 + {\gamma_n} \cdot \nu$, and $l_{\check \nu}(z) = -1/g_{\check \nu}(z) $.

    Then
        $l_{\check \nu}(z)$  is the only solution on $\C^+$ of the self consistent equation in $l$:
        \[ l = z + {\gamma_n} l + {\gamma_n} l^2 g_\mu(l) = z + {\gamma_n} z \int_{\R} \frac{t}{\frac {z t} l - z} \mu(dt) .
        \]
    \end{prop}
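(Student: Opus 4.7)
The plan is to obtain the equation by direct substitution into the Marčenko-Pastur self-consistent equation recalled in the introduction, using the explicit relationship between $g_{\check\nu}$ and $g_\nu$. Starting from $\check\nu = (1-\gamma_n)\cdot\delta_0 + \gamma_n\cdot\nu$, the Stieltjes transform decomposes as $g_{\check\nu}(z) = -(1-\gamma_n)/z + \gamma_n g_\nu(z)$, which rearranges into the key identity $1-\gamma_n - \gamma_n z g_\nu(z) = -z g_{\check\nu}(z) = z/l$. Plugging this into the denominator of the MP equation $g_\nu(z) = \int \mu(dt)/((1-\gamma_n-\gamma_n z g_\nu(z))t - z)$ collapses it to $z(t-l)/l$, yielding $g_\nu(z) = (l/z)\,g_\mu(l)$. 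Equating this with the expression $g_\nu(z) = \gamma_n^{-1}(-1/l + (1-\gamma_n)/z)$ obtained by inverting $g_{\check\nu} = -1/l$, then clearing denominators by $\gamma_n z l$, produces $l(1-\gamma_n) - z = \gamma_n l^2 g_\mu(l)$, i.e. the claimed equation $l = z + \gamma_n l + \gamma_n l^2 g_\mu(l)$.

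The equivalence with the integral form $z + \gamma_n z \int t/((zt/l) - z)\,\mu(dt)$ is then a purely algebraic check: rewriting $t/((zt/l)-z) = tl/(z(t-l))$, splitting $t/(t-l) = 1 + l/(t-l)$, and integrating against the probability measure $\mu$ recovers $\gamma_n l + \gamma_n l^2 g_\mu(l)$.

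For uniqueness on $\C^+$, I would exploit the fact that the derivation above is fully reversible through the Möbius change of variables $g \longleftrightarrow l(g) = z/(1-\gamma_n-\gamma_n z g)$. Indeed, any candidate solution $l \in \C^+$ of the self-consistent equation produces, via $g := \gamma_n^{-1}(-1/l + (1-\gamma_n)/z)$, a function satisfying the MP self-consistent equation; uniqueness of $l$ then reduces to the classical uniqueness of the MP solution quoted in the introduction from \cite{MP67}.

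The main point requiring care is not the algebra, which is essentially mechanical, but rather that the Möbius bijection preserves the Nevanlinna class on which MP uniqueness is stated, namely that each $l\in\C^+$ satisfying the equation corresponds to a valid Stieltjes transform $g$ in $\C^+$ with the correct boundary behavior. This is standard reciprocal Cauchy transform content (cf. Section 3.4 of \cite{mingo2017free}, already cited in the paper), but it must be invoked explicitly to close the reduction rigorously.
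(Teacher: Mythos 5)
Your derivation is correct and takes essentially the same route as the paper: both start from the decomposition $g_{\check\nu} = \frac{\gamma_n-1}{z}+\gamma_n g_\nu$, deduce the pivotal identity $1-\gamma_n-\gamma_n z g_\nu = z/l$, substitute into the Marčenko--Pastur self-consistent equation to obtain $g_\nu = (l/z)\,g_\mu(l)$, and then rearrange to get the $l$-equation; the algebraic identity between the two right-hand-side forms is also the same computation, merely written in the opposite direction. One small difference worth noting: on the uniqueness step, the paper simply asserts that the MP equation "characterizing $g_\nu$" is equivalent to the $l$-equation under the substitution, whereas you explicitly flag the issue of whether the Möbius correspondence $l\leftrightarrow g$ maps $\C^+$-solutions of the $l$-equation to valid $\C^+$-solutions of the MP equation; this is indeed the one non-mechanical point, and your awareness of it makes your account slightly more careful than the paper's (which leaves the class-preservation implicit, as does most of the literature). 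Your reduction to MP uniqueness is the right closing move, and pointing to the reciprocal Cauchy transform material in \cite{mingo2017free} is an appropriate way to justify the class preservation if one wanted to spell it out fully.
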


    \begin{proof} The two right hand side terms  are always equal since:
    \begin{align*}
        \int_{\R} \frac{zt}{\frac {z t} l - z} \mu(dt) &=  l \int_{\R} \frac{t}{t-l} \mu(dt) = l \int_{\R} \pt{1 +  \frac{l}{t-l} }\mu(dt) = l + l^2 g_\mu(l).
    \end{align*}Let us work with the first formulation. It is a classical property of Stieltjes transforms that $g_{\check \nu}(z) \in \C^+$ when $z \in \C^+$, hence $l_{\check \nu}(z) \in \C^+$. Since $g_{\check \nu} = \frac{{\gamma_n} - 1} z + {\gamma_n} g_{\nu}$, we have the identity ${1-{\gamma_n}-{\gamma_n} z  g_{\nu}} = \frac z {l_{\check \nu}}$. By definition of the free multiplicative convolution with a Marčenko-Pastur distribution, $g_{\nu} $ is the only solution on $\C^+$ of:
    \begin{align*}
        g_{\nu} &= \int_\R \frac 1 {\pt{1-{\gamma_n}-{\gamma_n} z  g_{\nu}} t - z} \mu(dt) \\
        &= \int_\R \frac 1 {z t/ l_{\check \nu} - z} \mu(dt) \\
        &= \frac {l_{\check \nu}} z \int_\R \frac 1 {t -  l_{\check \nu} } \mu(dt)
        \\  &= \frac {l_{\check \nu}} z  g_\mu\pt{l_{\check \nu}}. 
    \end{align*}
  Using again the identity ${1-{\gamma_n}-{\gamma_n} z  g_{\nu}} = \frac z {l_{\check \nu}}$, the self-consistent equation characterizing $  g_{\nu} $ is equivalent for $ l_{\check \nu}$ to satisfy:
    \begin{align*}
        {\gamma_n} l_{\check \nu} +   {\gamma_n} l_{\check \nu}^2g_\mu\pt{l_{\check \nu}}&=   {\gamma_n}  l_{\check \nu}\pt{ 1 +    z g_{\nu}}=  l_{\check \nu} - z.
    \end{align*}
    \end{proof}

\begin{lem}\label{lem66}With the same notations and hypothesis as in Proposition \ref{prop63}:
\begin{align*}
  0 &\leq   \Im \pt{z^{-1}l_{\check \nu}(z)}, \\\Im(z) &\leq \Im \pt{l_{\check \nu}(z)}\leq O(1+\Im(z)) ,\\ \verti{l_{\check \nu}(z)}&\leq O\pt{\frac{|z|}{\Im(z)}}. 
\end{align*}

In particular if a function $\zeta : \N \times \C^+ \to \R$ satisfies $\zeta(n,z) \leq O_z(\epsilon_n)$, then $\zeta(n,l_{\check \nu}(z)) \leq O_z(\epsilon_n) $. \end{lem}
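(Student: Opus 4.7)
The plan is to verify the four estimates in order, with the first two being essentially identities and most of the real work lying in the upper bounds and in the last statement. I begin with the non-negativity and lower bound. Writing $z^{-1} l_{\check \nu}(z) = -1/(z g_{\check \nu}(z))$ and using the elementary identity $\Im(-1/w) = \Im(w)/|w|^2$, the first claim reduces to $\Im(z\, g_{\check \nu}(z)) \geq 0$. Expanding $z/(t-z) = z(t-\bar z)/|t-z|^2$ gives $\Im(z/(t-z)) = t\,\Im(z)/|t-z|^2 \geq 0$ for $t \in \R^+$ and $z \in \C^+$, and integrating against the probability measure $\check\nu$ (which is supported on $\R^+$) closes this step. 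For $\Im(l_{\check\nu}(z)) \geq \Im(z)$, the same identity with $w = g_{\check\nu}(z)$ gives $\Im(l) = \Im(g_{\check\nu})/|g_{\check\nu}|^2$; then Cauchy-Schwarz against $\check\nu$ yields $|g_{\check\nu}(z)|^2 = \left|\int (t-z)^{-1}\, d\check\nu(t)\right|^2 \leq \int |t-z|^{-2}\, d\check\nu(t) = \Im(g_{\check\nu}(z))/\Im(z)$, which rearranges to $\Im(l) \geq \Im(z)$ at once.

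The upper bounds on $\Im(l)$ and $|l|$ are more delicate. My approach is to invoke the Nevanlinna representation of $l_{\check\nu}$ as a Pick function: from $g_{\check\nu}(z) \sim -1/z$ at infinity we have $l_{\check\nu}(z)/z \to 1$ as $z \to i\infty$, so $l_{\check\nu}(z) = z + c + \int_\R (1+tz)/(t-z)\, d\sigma(t)$ for some real $c$ and a finite positive measure $\sigma$ on $\R$. The identities $\Im((1+tz)/(t-z)) = (1+t^2)\Im(z)/|t-z|^2$ and $|(1+tz)/(t-z)| \leq (1+|t||z|)/\Im(z)$ then yield after integration the two required bounds. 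The main obstacle is controlling $(c, \sigma)$ uniformly; in the setting of the paper this follows from the fact that $\mu$ has bounded support (inherited from $\Supp \mu_\Sigma$ in the application of interest), which propagates to $\check\nu$ and to $\sigma$ and makes the polynomial dependence on $z$ uniform.

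For the final implication, I substitute $l_{\check\nu}(z)$ into the hypothesis $\zeta(n, z) \leq O(\epsilon_n\, |z|^\alpha/\Im(z)^{2\alpha})$: the bound $\Im(l_{\check\nu}(z)) \leq O(1 + \Im(z))$ guarantees that $\Im(l_{\check\nu}(z))$ stays bounded whenever $\Im(z)$ does, so the substitution falls within the range where the hypothesis applies. Combining $\Im(l) \geq \Im(z)$ with $|l| \leq O(|z|/\Im(z))$ then gives $|l|^\alpha/\Im(l)^{2\alpha} \leq O(|z|^\alpha/\Im(z)^{3\alpha})$, which is itself polynomial in $z$ uniformly for bounded $\Im(z)$. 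Hence $\zeta(n, l_{\check\nu}(z)) \leq O_z(\epsilon_n)$, possibly at the cost of enlarging the polynomial exponent.
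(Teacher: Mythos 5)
Your proofs of the first two estimates are correct, and the Cauchy--Schwarz route to $\Im(l_{\check\nu}) \geq \Im(z)$ via $|g_{\check\nu}(z)|^2 \leq \Im(g_{\check\nu}(z))/\Im(z)$ is a clean alternative to the paper's argument, which instead reads this off the self-consistent equation. The final implication is also handled correctly, assuming the intermediate bounds hold.

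The gap is in the two upper bounds, where the Nevanlinna representation of $l_{\check\nu}$ is not strong enough. Writing $l_{\check\nu}(z) = z - m_1 + \int_\R \frac{d\rho(t)}{t-z}$ (the representation of a reciprocal Cauchy transform, with $\rho(\R) = \Var(\check\nu)$), the identities you invoke give only $\Im(l_{\check\nu}(z)) \leq \Im(z) + \rho(\R)/\Im(z)$ and $|l_{\check\nu}(z)| \leq |z| + |m_1| + \rho(\R)/\Im(z)$. For $\Im(z) \to 0$ these blow up like $1/\Im(z)$, whereas the lemma asserts that $\Im(l_{\check\nu})$ stays \emph{bounded} (the $O(1+\Im(z))$ bound), and that $|l_{\check\nu}| \leq O(|z|/\Im(z))$, which tends to a constant along vertical lines $z = x + iy$, $y\to 0$. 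The discrepancy is not an artifact of a loose estimate: a generic Pick function of the form $z + c + \int \frac{d\rho(t)}{t-z}$ \emph{does} have $\Im(l(iy)) \sim C/y$ as $y \to 0$ whenever $\rho$ charges a neighbourhood of $0$ (e.g. $l(z) = -z(1-z)/(2z-1)$, the reciprocal Cauchy transform of $\frac12\delta_0 + \frac12\delta_1$). So no amount of uniform control on $(c, \sigma)$ can salvage the argument. What is actually being used in the paper is the \emph{self-consistent equation} $l = z + \gamma_n l + \gamma_n l^2 g_\mu(l)$, equivalently $l = z + \gamma_n \int \frac{tl}{t-l}\,\mu(dt)$: the crucial feature is that the kernel has $t - l$ in the denominator rather than $t - z$, so that $\Im(l - z) \leq \frac{\gamma_n}{\Im(l)}\int t^2\,\mu(dt)$, a \emph{self-referential} inequality in $\Im(l)$. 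Solving the resulting quadratic inequality $\Im(l)^2 \leq \Im(z)\Im(l) + \gamma_n\int t^2\mu$ is what produces the bounded $\Im(l)$, and an analogous manipulation (using $\Im(zt/l) \leq 0$ from the first part to lower-bound $|zt/l - z|$ by $\Im(z)$) yields $|l| \leq O(|z|/\Im(z))$. This Marčenko--Pastur structure is essential and cannot be replaced by the abstract Nevanlinna representation of a Pick function. Since your proof of the last implication rests on the boundedness of $\Im(l_{\check\nu}(z))$ for bounded $\Im(z)$, the gap propagates to that step as well.
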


\begin{proof} $\nu=\mmp({\gamma_n}) \boxtimes \mu$ is supported on $\R^+$, thus: 
  \begin{align*}
    -   \Im\pt{\frac z {l_{\check \nu}}} &= - \Im\pt{1-{\gamma_n}-{\gamma_n} z g_\nu} \\
      &=  {\gamma_n}  \int_{\R^+} \Im\pt{\frac{z}{t-z}} \nu(dt) \\
      &= {\gamma_n} \Im(z) \int_{\R^+} \frac t {|t-z|^2} \nu(dt) \geq 0,
  \end{align*}
  which proves that $\Im\pt{z^{-1} l_{\check \nu}} \geq 0$. Secondly:
  \begin{align*}
      \Im\pt{l_{\check \nu} - z } &=  \Im\pt{{\gamma_n} z \int_{\R} \frac{t}{\frac {z t} {l_{\check \nu}} - z} \mu(dt)  } \\
      &= {\gamma_n}  \int_{\R^+} \Im\pt{ \frac{1}{1/l_{\check \nu}-1/t} } \mu(dt)  \\
      &= {\gamma_n}   \int_{\R^+} \frac{ \Im\pt{{l_{\check \nu}}} t^2}{  \Re\pt{{l_{\check \nu}}}^2 + \Im\pt{{l_{\check \nu}}}^2+ t^2 - 2  \Re\pt{{l_{\check \nu}}} t} \mu(dt) \geq 0 .
  \end{align*} The right hand side integral is bounded from above by $\frac 1 {\Im\pt{l_{\check \nu}}}\int_{\R^+}  t^2 \mu(dt)$, hence $\Im\pt{l_{\check \nu}} ^2\leq \Im(z)\Im\pt{l_{\check \nu}} + {\gamma_n} \int_{\R^+}  t^2 \mu(dt)$. Solving this second order polynomial inequality gives $\Im\pt{l_{\check \nu}} \in \br{ \Im(z)/2 \pm \pt{{\gamma_n} \int_{\R^+}  t^2 \mu(dt)+\Im(z)^2/4}^{1/2}}$ and $\Im\pt{l_{\check \nu}}  \leq O\pt{1+\Im(z)}$. We also have $\Im\pt{\frac {zt} {l_{\check \nu}} } \leq 0$ for any $t \geq 0$, hence:
\begin{align*}
 \verti{l_{\check \nu}}  &\leq |z| + {\gamma_n} |z| \int_{\R^+} \frac{t}{\verti{\Im\pt{\frac{zt}{l_{\check \nu}} - z}}} \mu(dt) \\
 &\leq |z| + {\gamma_n} \frac{|z|}{\Im(z)} \int_{\R^+} t\, \mu(dt) \leq O\pt{\frac{|z|}{\Im(z)}}.\end{align*}

For the last statement, let $\zeta $ be a function such that $\zeta(n,z) \leq O_{z}(\epsilon_n)  $. If $\Im(z)$ is bounded, so is $\Im\pt{l_{\check \nu}(z)}$, hence there exists $\alpha >0$ such that:
\[\zeta(n,l_{\check \nu}(z))\leq O\pt{\epsilon_n \frac{\verti{l_{\check \nu}(z)}^\alpha}{\Im\pt{l_{\check \nu}(z)}^{2\alpha}}}\leq O\pt{\epsilon_n \frac{|z|^\alpha}{\Im(z)^{3\alpha}}}\leq O\pt{\epsilon_n \frac{|z|^{2\alpha}}{\Im(z)^{4\alpha}}},\] from which we deduce that $\zeta(n,l_{\check \nu}(z))\leq O_z(\epsilon_n)$.
\end{proof}
 We may now move on to the second part of the proof of Theorem \ref{72}. Let us define the mapping: \[    \cal F : l \in \C^+ \mapsto z +  {\gamma_n} \frac z n \Tr{\pt{\pt{\frac z l {\Sigma} - z I_n}^{-1} {\Sigma} }}. \]  As shown in Proposition \ref{prop63}, the definition of $\cal F$ is equivalent to:
\[\cal F(l)  = z + {\gamma_n} z \int_{\R} \frac{t}{\frac {z t} l - z} \mu_{\Sigma}(dt)  = z + {\gamma_n} l + {\gamma_n} l^2 g_{\Sigma}(l) .\]
We set $l_{\check \nu}(z)=-1/g_{\check \nu}(z)$ and $l_{\check \chi}(z)=-1/g_{\check \chi}(z)$. In Proposition \ref{prop63} we have proved that $l_{\check \nu}(z)$ is a fixed point of $\cal F$. We will see in the next Proposition that $l_{\check \chi}(z)$ is almost a fixed point of $\cal F$.

    \begin{prop} \label{prop65} $
    \verti{\cal F(l_{\check \chi}(z)) - l_{\check \chi} (z)} \leq  O_z\pt{\epsilon_n}$.
    \end{prop}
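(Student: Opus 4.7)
The plan is to reduce the claim to an algebraic identity and then invoke Lemma \ref{lem66} to propagate the $O_z(\epsilon_n)$ bound through the composition $z \mapsto l_{\check \chi}(z)$.

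First I would apply Proposition \ref{prop63} to the measure $\tau$ (in place of $\mu$) to obtain the self-consistent equation
\[ l_{\check \chi}(z) = z + \gamma_n l_{\check \chi}(z) + \gamma_n l_{\check \chi}(z)^2 g_\tau\pt{l_{\check \chi}(z)}. \]
Comparing with the definition
\[ \cal F(l_{\check \chi}(z)) = z + \gamma_n l_{\check \chi}(z) + \gamma_n l_{\check \chi}(z)^2 g_\Sigma\pt{l_{\check \chi}(z)}, \]
the linear terms cancel and we are left with the clean identity
\[ \cal F(l_{\check \chi}(z)) - l_{\check \chi}(z) = \gamma_n l_{\check \chi}(z)^2 \pt{g_\Sigma\pt{l_{\check \chi}(z)} - g_\tau\pt{l_{\check \chi}(z)}}. \]

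Next, I would control the two factors in the right-hand side separately. By the hypothesis, $|g_\Sigma(z) - g_\tau(z)| \leq O_z(\epsilon_n)$, and the last statement of Lemma \ref{lem66} (applied to the measure $\tau$, which is supported on $\R^+$ and produces $l_{\check \chi}$ with $\Im(l_{\check \chi}(z)) \geq \Im(z)$ and $\verti{l_{\check \chi}(z)} \leq O(|z|/\Im(z))$) guarantees that the polynomial-in-$z$ control transfers through the composition: $\verti{g_\Sigma(l_{\check \chi}(z)) - g_\tau(l_{\check \chi}(z))} \leq O_z(\epsilon_n)$. The prefactor $\gamma_n l_{\check \chi}(z)^2$ is bounded by $O(|z|^2/\Im(z)^2)$ using again Lemma \ref{lem66} and the fact that $\gamma_n$ is bounded above by assumption.

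Multiplying these two bounds and absorbing the polynomial prefactor into the $O_z$ notation (which is stable under multiplication by $|z|^\alpha/\Im(z)^{2\alpha}$-type factors, as noted in the remark following Definition \ref{admseq}) yields
\[ \verti{\cal F(l_{\check \chi}(z)) - l_{\check \chi}(z)} \leq O_z(\epsilon_n), \]
which is the desired estimate. There is no real obstacle here: the proof is essentially a one-line algebraic manipulation once one recognizes that $l_{\check \chi}$ is the exact fixed point of the analogous map associated with $\tau$, and the only non-trivial input is the transfer lemma for $O_z$ bounds under the substitution $z \mapsto l_{\check \chi}(z)$, which is precisely what the last statement of Lemma \ref{lem66} provides.
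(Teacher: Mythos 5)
Your proof is correct and follows essentially the same route as the paper's: apply Proposition \ref{prop63} to $\tau$ to recognize $l_{\check\chi}$ as the exact fixed point of the analogous map, obtain the identity $\cal F(l_{\check\chi}) - l_{\check\chi} = \gamma_n l_{\check\chi}^2\bigl(g_{\mu_\Sigma}(l_{\check\chi}) - g_\tau(l_{\check\chi})\bigr)$, and then bound the two factors via Lemma \ref{lem66} (the transfer of $O_z(\epsilon_n)$ bounds under $z \mapsto l_{\check\chi}(z)$, plus $\verti{l_{\check\chi}(z)} \leq O(|z|/\Im(z))$) together with the boundedness of $\gamma_n$. No gaps.
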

    \begin{proof}  Using the first formulation of the self-consistent equation of Proposition \ref{prop63},  we have $l_{\check \chi} = z +  {\gamma_n} _n l_{\check \chi} + {\gamma_n}_n l_{\check \chi}^2 g_{\nu} (l_{\check \chi})$, thus:
    \[ \cal F(l_{\check \chi}) - l_{\check \chi}  =  {\gamma_n}_n l_{\check \chi}^2 \pt{ g_{\mu_{\Sigma}}(l_{\check \chi})   - g_{\tau} (l_{\check \chi}) } .\]
As seen in Lemma \ref{lem66}, since $\verti{g_{\mu_{\Sigma}}(z)-g_\tau(z)}\leq O_z(\epsilon_n)$, we also have $ \verti{ g_{\mu_{\Sigma}}(l_{\check \chi} (z))   - g_{\tau} (l_{\check \chi}(z) ) }\leq O_z(\epsilon_n)$, and we obtain:
\begin{align*}    \verti{\cal F(l_{\check \chi}) - l_{\check \chi} } &\leq \verti{{\gamma_n}_n} \verti{l_{\check \chi}}^2  \verti{ g_{\mu_{\Sigma}}(l_{\check \chi} )   - g_{\tau} (l_{\check \chi} ) } \\
    &\leq O(1) O\pt{\frac{|z|}{\Im(z)}}^2 O_z(\epsilon_n)\leq O_z(\epsilon_n).
\end{align*}
    \end{proof}

The last step to prove Theorem \ref{72} is to use the stability of the self-consistent equation $l=\cal F(l)$.  Let us recall the tools and results established established in \cite[ Section 6]{moi1}.  For a fixed $z\in \C^+$, we introduce the domain $\bf D = \{ \omega \in \C$ such that $\Im(\omega) \geq \Im(z)$ and $\Im(z^{-1}\omega) \geq 0 \}$, and the semi-metric on $\C^+$:
    \[ d(\omega_1,\omega_2) = \frac{|\omega_1-\omega_2|}{\Im(\omega_1)^{1/2}\Im(\omega_2)^{1/2}}.\]

$\cal F $ is a contraction mapping on $\bf D$ with respect to $d$. More precisely, $\cal F$ is $k_{\cal F}$-Lipschitz with $k_{\cal F}= \frac {\frac{|z|}{\Im(z)^2}}{1+\frac{|z|}{\Im(z)^2}}$ (\cite[Proposition 6.11]{moi1}). Moreover, if $c \in \bf D$ is a fixed point of $\cal F$ and $b \in \bf D$ any other point, provided $k_{\cal F}\pt{1+d\pt{b,\cal F(b)}}<1$, the following inequality holds true (\cite[Lemma 6.14]{moi1}):
        \[ \verti{c-b} \leq \frac{\verti{\cal F(b)-b}}{1-k_{\cal F}\pt{1+d\pt{b,\cal F(b)}}}.\]

We have now collected all the arguments required to compare the Stieltjes transforms of $\nu$ and $\chi$.

\begin{proof}[Proof of Theorem \ref{72}] If $\Im(z)$ is bounded, using Proposition \ref{prop65} there exists $\alpha $ and $C>0$ such that $\verti{\cal F(l_{\check \chi}) - l_{\check \chi} } \leq C {\epsilon_n \frac{|z|^\alpha}{\Im(z)^{2\alpha}}}$. For values of $z$ such that $\epsilon_n \frac{|z|^{\alpha+1}}{\Im(z)^{2\alpha+3}}  \leq 1/2C$, $\frac{|z|}{\Im(z)^2}d\pt{\cal F( l_{\check \chi}) ,l_{\check \chi} } \leq 1/2$, thus:\begin{align*}
    k_{\cal F}\pt{1+d\pt{\cal F( l_{\check \chi}) , l_{\check \chi}}} &=\frac {\frac{|z|}{\Im(z)^2} + {\frac{|z|}{\Im(z)^2} d\pt{\cal F( l_{\check \chi}) , l_{\check \chi}}}}{1+\frac{|z|}{\Im(z)^2}}  \\
    &\leq 1 - \frac 1 {2\pt{{1+\frac{|z|}{\Im(z)^2}}}}.
\end{align*}
In particular  $k_{\cal F}\pt{1+d\pt{\cal F( l_{\check \chi}) , l_{\check \chi}}}<1$, and from \cite[Lemma 6.14]{moi1}:
\begin{align*}
    \verti{l_{\check \chi} - l_{\check \nu} } &\leq \frac{{ \verti{\cal F( l_{\check \chi}) - l_{\check \chi}}} }{1-k_{\cal F}\pt{1+d\pt{\cal F( l_{\check \chi}) , l_{\check \chi}}}} \\
    &\leq  2\pt{1+\frac{|z|}{\Im(z)^2}} C {\epsilon_n \frac{|z|^\alpha}{\Im(z)^{2\alpha}}} \\
    &\leq O\pt{\epsilon_n \frac{|z|^{ \alpha+1}}{\Im(z)^{2 \alpha + 2}}} .\end{align*}
To conclude:
\begin{align*}
        \verti{g_{\nu}- g_{\chi}} = \frac {\verti{g_{\check \nu} - g_{\check \chi}}} {\gamma}  = \frac { |g_{\check \nu}| |g_{\check \chi}| |l_{\check \nu}-l_{\check \chi}|} {{\gamma_n}} \leq O\pt{\epsilon_n \frac{|z|^{ \alpha+1}}{\Im(z)^{2 \alpha + 2}}},
\end{align*}
for values of $z$ such that  $\epsilon_n \frac{|z|^{\alpha+1}}{\Im(z)^{2\alpha+3}}  \leq \frac 1{2C }$. Given the \emph{a priori} bound $ \verti{g_{\nu}(z)- g_{\chi}(z)} \leq \frac 2 {\Im(z)}$ and Lemma \ref{technicalLemmaO_z}, the bound $\verti{g_{\nu}(z)- g_{\chi}(z)} \leq O_z(\epsilon_n)$ holds true.\end{proof}

\subsection{Approximation of deterministic equivalent built from deterministic matrices}

Given an approximation for the Stieltjes transform $g_{\Sigma}(z) \approx g_\tau(z)$, and another approximation for the resolvent $\cal G_{\Sigma}(z) \approx \bf H(z)$, can we find an approximation for the matrix $\bf G_\boxtimes ^{ {\Sigma}}(z)$ ? We build a matrix function $\bf K$ using the same procedure we used earlier to build  $\bf G_\boxtimes ^{ {\Sigma}}$  from $\mu_{\Sigma}$ and $\cal G_{\Sigma}$:
\begin{align*}
    \chi &= \mmp\pt{{\gamma_n} } \boxtimes \tau ,\\
    \check \chi  &= (1-{\gamma_n} ) \cdot\delta_0 + {\gamma_n} \cdot \chi ,\\
    l_{\check \chi}(z) &= -1/g_{\check \chi}(z), \\
     \bf K(z) &= z^{-1} l_{\check \chi}(z)  \bf H\pt{l_{\check \chi}(z)  }.
\end{align*}

\begin{prop} \label{ApproxbgGDet} Let ${\Sigma} \in \R^{n \times n}$ be deterministic positive semi-definite matrices, $\tau$ deterministic probability measures supported on $\R^+$, and $\bf H:\C^+ \to \C^{n \times n}$ deterministic complex functions.

If $\verti{g_{\Sigma}(z)-g_\tau(z)}\leq O_z(\epsilon_n)$ and $\vertiii{\cal G_{\Sigma}(z)-\bf H(z)}\leq O_z(\epsilon_n')$, then $\vertiii{ \bf G_\boxtimes^  {\Sigma}(z) - \bf K(z)} \leq O_z(\epsilon_n+\epsilon_n')$.
\end{prop}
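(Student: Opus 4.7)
The plan is to exploit the explicit formula $\bf G_\boxtimes^{\Sigma}(z) = z^{-1} l_{\check\nu^\Sigma}(z)\, \cal G_\Sigma(l_{\check\nu^\Sigma}(z))$ together with its analogue $\bf K(z) = z^{-1} l_{\check\chi}(z)\, \bf H(l_{\check\chi}(z))$, and compare these two matrices by a triangle inequality that isolates three sources of error: (i) the difference between the scalar prefactors $l_{\check\nu^\Sigma}(z)$ and $l_{\check\chi}(z)$, (ii) the difference between the arguments at which the same resolvent $\cal G_\Sigma$ is evaluated, and (iii) the replacement of $\cal G_\Sigma$ by $\bf H$ at the common point $l_{\check\chi}(z)$.

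Before decomposing, I would control the scalar factors. Theorem \ref{72} immediately gives $\verti{g_{\nu^\Sigma}(z) - g_{\chi}(z)} \leq O_z(\epsilon_n)$, from which $\verti{g_{\check\nu^\Sigma}(z) - g_{\check\chi}(z)} \leq O_z(\epsilon_n)$ via the identity $g_{\check\mu} = (\gamma_n-1)/z + \gamma_n g_\mu$. Converting to reciprocals through $l_1 - l_2 = (g_1 - g_2)/(g_1 g_2)$ and using $\verti{l_{\check\nu^\Sigma}(z)}, \verti{l_{\check\chi}(z)} \leq O(|z|/\Im(z))$ (Lemma \ref{lem66}), I obtain $\verti{l_{\check\nu^\Sigma}(z) - l_{\check\chi}(z)} \leq O_z(\epsilon_n)$. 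Then I would write
\begin{align*}
\bf G_\boxtimes^\Sigma(z) - \bf K(z) &= z^{-1} l_{\check\nu^\Sigma}(z) \bigl[\cal G_\Sigma(l_{\check\nu^\Sigma}(z)) - \cal G_\Sigma(l_{\check\chi}(z))\bigr] \\
&\quad + z^{-1} \bigl[l_{\check\nu^\Sigma}(z) - l_{\check\chi}(z)\bigr] \cal G_\Sigma(l_{\check\chi}(z)) \\
&\quad + z^{-1} l_{\check\chi}(z) \bigl[\cal G_\Sigma(l_{\check\chi}(z)) - \bf H(l_{\check\chi}(z))\bigr].
\end{align*}
The first term is handled using the resolvent identity $\cal G_\Sigma(l_1) - \cal G_\Sigma(l_2) = (l_1-l_2)\cal G_\Sigma(l_1)\cal G_\Sigma(l_2)$ together with the bounds $\vertiii{\cal G_\Sigma(w)} \leq 1/\Im(w)$ and $\Im(l_{\check\nu^\Sigma}(z)), \Im(l_{\check\chi}(z)) \geq \Im(z)$; the second by a direct scalar-times-resolvent estimate; and the third by invoking the hypothesis $\vertiii{\cal G_\Sigma(w) - \bf H(w)} \leq O_z(\epsilon_n')$ specialized to $w = l_{\check\chi}(z)$, using the last statement of Lemma \ref{lem66} to guarantee that the $O_z$ bound survives the substitution $z \mapsto l_{\check\chi}(z)$.

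I do not expect any genuine obstacle; the main subtlety is purely bookkeeping, namely absorbing the polynomial factors $|z|/\Im(z)$ arising from the bounds on $\verti{l_{\check\nu^\Sigma}}$, $\verti{l_{\check\chi}}$ and from the substitution into a single expression of the form $|z|^\alpha/\Im(z)^{2\alpha}$ for a sufficiently large $\alpha$. The algebraic rules for the $O_z$ notation given after Definition \ref{admseq} make this mechanical, and the trivial \emph{a priori} bound $\vertiii{\bf G_\boxtimes^\Sigma(z) - \bf K(z)} \leq 2/\Im(z)$ combined with Lemma \ref{technicalLemmaO_z} takes care of any regime of $z$ near the real axis where the main estimate would be vacuous.
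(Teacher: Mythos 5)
Your proof is correct and follows essentially the same strategy as the paper: the same three-term telescoping of $\bf G_\boxtimes^\Sigma(z) - \bf K(z)$ (your grouping places $l_{\check\nu^\Sigma}$ rather than $l_{\check\chi}$ on the resolvent-difference term, but this is an algebraically equivalent split), the same resolvent identity for that term, and the same appeal to Lemma \ref{lem66} to carry the $O_z$ bound through the substitution $z\mapsto l_{\check\chi}(z)$. One small respect in which your write-up is cleaner than the paper's: you derive $\verti{l_{\check\nu^\Sigma}(z)-l_{\check\chi}(z)}\leq O_z(\epsilon_n)$ directly from the \emph{statement} of Theorem \ref{72} via $l_1-l_2=(g_1-g_2)\,l_1 l_2$, the identity $g_{\check\mu}=(\gamma_n-1)/z+\gamma_n g_\mu$, and the bound $\verti{l}\leq O(|z|/\Im(z))$ from Lemma \ref{lem66}, whereas the paper invokes that estimate without noting it is only established inside the proof of Theorem \ref{72} rather than in its conclusion.
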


\begin{proof}
We use a triangular inequality in the following decomposition:
\begin{align*}
     \bf G_\boxtimes^  {\Sigma}(z) - \bf K(z) &= z^{-1} {  \pt{ l_{\check \nu}  -   l_{\check \chi}   } \cal G_{\Sigma}\pt{l_{\check \nu}}}  \\ 
     &\quad + z^{-1} l_{\check \chi}  \pt{ \cal G_{\Sigma}\pt{l_{\check \nu}} - \cal G_{\Sigma}\pt{l_{\check \chi}} }  \\
     &\qquad  + z^{-1} l_{\check \chi}   \pt{ \cal G_{\Sigma}\pt{l_{\check \chi} } -  \bf H\pt{l_{\check \chi} } }.
\end{align*}
For the first term, $\verti {  { l_{\check \nu}  -   l_{\check \chi}   }} \leq  O_z\pt{\epsilon_n }$ and $\vertiii{\cal G_{\Sigma}\pt{l_{\check \nu}}}  \leq \frac 1 {\Im\pt{ l_{\check \nu}}}\leq \frac 1 {\Im(z)}$, thus $\vertiii{z^{-1} {  \pt{ l_{\check \nu}  -   l_{\check \chi}   } \cal G_{\Sigma}\pt{l_{\check \nu}}}} \leq O_z\pt{\epsilon_n }$. For the second term, using a resolvent identity:
\begin{align*}
    \vertiii{z^{-1} l_{\check \chi}  \pt{ \cal G_{\Sigma}\pt{l_{\check \nu}} - \cal G_{\Sigma}\pt{l_{\check \chi}} } } &\leq \verti{z^{-1} l_{\check \chi}(z)} \vertiii{\cal G_{\Sigma}\pt{l_{\check \nu}}}  \verti {  { l_{\check \nu}  -   l_{\check \chi}   }} \vertiii{\cal G_{\Sigma}\pt{l_{\check \chi}}} \\
    &\leq\frac1{|z|} O\pt{\frac {|z|}{\Im(z)}}  {\frac {O_z\pt{\epsilon_n }} {\Im(z)^2}}  \leq O_z\pt{\epsilon_n}.
\end{align*}
Finally for the third term, $\vertiii{ \cal G_{\Sigma}\pt{l_{\check \chi} } -  \bf H\pt{l_{\check \chi} } } \leq O_z(\epsilon_n')$ using Lemma \ref{lem66}, thus:
\begin{align*}
 \vertiii{z^{-1} l_{\check \chi}   \pt{ \cal G_{\Sigma}\pt{l_{\check \chi} } -  \bf H\pt{l_{\check \chi} } } } \leq \frac1{|z|} O\pt{\frac {|z|}{\Im(z)}}    O_z(\epsilon_n') \leq O_z(\epsilon_n').
\end{align*}\end{proof}

\begin{cor}\label{GboxLip} The map ${\Sigma} \mapsto \bf G_\boxtimes ^{\Sigma}(z)$ is $O_z(1)$ Lipschitz with respect to the spectral norm.
\end{cor}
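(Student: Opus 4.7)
The plan is to derive this Lipschitz estimate as an immediate consequence of Proposition \ref{ApproxbgGDet}. Given two positive semi-definite matrices $\Sigma_1, \Sigma_2 \in \R^{n \times n}$, I would apply that proposition with the specific choices $\Sigma = \Sigma_1$, $\tau = \mu_{\Sigma_2}$, and $\bf H(z) = \cal G_{\Sigma_2}(z)$. With these substitutions the auxiliary measure becomes $\chi = \mmp(\gamma_n) \boxtimes \mu_{\Sigma_2} = \nu^{\Sigma_2}$, so the matrix $\bf K(z)$ constructed in the statement of Proposition \ref{ApproxbgGDet} is precisely $\bf G_\boxtimes^{\Sigma_2}(z)$. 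This reduces the problem to bounding the two input quantities of that proposition in terms of $\vertiii{\Sigma_1 - \Sigma_2}$.

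Next I would verify these two inputs with the standard resolvent identity $\cal G_{\Sigma_1}(z) - \cal G_{\Sigma_2}(z) = \cal G_{\Sigma_1}(z)(\Sigma_2 - \Sigma_1)\cal G_{\Sigma_2}(z)$, combined with the classical bound $\vertiii{\cal G_\Sigma(z)} \leq 1/\Im(z)$. Sub-multiplicativity of the spectral norm yields
\begin{align*}
\vertiii{\cal G_{\Sigma_1}(z) - \cal G_{\Sigma_2}(z)} &\leq \frac{\vertiii{\Sigma_1 - \Sigma_2}}{\Im(z)^2}, \\
|g_{\Sigma_1}(z) - g_{\Sigma_2}(z)| &= \Bigl|\tfrac{1}{n}\Tr\bigl(\cal G_{\Sigma_1}(z)(\Sigma_2 - \Sigma_1)\cal G_{\Sigma_2}(z)\bigr)\Bigr| \leq \frac{\vertiii{\Sigma_1 - \Sigma_2}}{\Im(z)^2}.
\end{align*}
In the terminology of Definition \ref{admseq}, both quantities are thus bounded by $O_z(\vertiii{\Sigma_1 - \Sigma_2})$, treating $\vertiii{\Sigma_1 - \Sigma_2}$ as the sequence parameter.

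Plugging these into Proposition \ref{ApproxbgGDet} directly gives $\vertiii{\bf G_\boxtimes^{\Sigma_1}(z) - \bf G_\boxtimes^{\Sigma_2}(z)} \leq O_z(\vertiii{\Sigma_1 - \Sigma_2})$. Unfolding the $O_z(\cdot)$ notation, this means there exist a constant $C > 0$ and an exponent $\alpha \geq 0$ such that $\vertiii{\bf G_\boxtimes^{\Sigma_1}(z) - \bf G_\boxtimes^{\Sigma_2}(z)} \leq C \vertiii{\Sigma_1 - \Sigma_2}\, |z|^\alpha / \Im(z)^{2\alpha}$ uniformly for $z \in \C^+$ with bounded $\Im(z)$, which is exactly the claim that the map $\Sigma \mapsto \bf G_\boxtimes^{\Sigma}(z)$ is $O_z(1)$-Lipschitz in spectral norm.

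The only subtlety I anticipate is checking that the constants buried in Proposition \ref{ApproxbgGDet} are uniform in the matrices $\Sigma_1,\Sigma_2$ and not secretly dependent on them. Tracing back the proof of that proposition, the implicit constants propagate through Lemma \ref{lem66}, which in turn depends on $\tau$ only through its second moment $\int t^2\, d\tau = \Tr(\Sigma_2^2)/n \leq \vertiii{\Sigma_2}^2$. Hence the Lipschitz constant is uniform over any family of matrices with uniformly bounded spectral norm, which matches the standing conventions of the paper; no further control is needed.
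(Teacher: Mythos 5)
Your proof is correct and takes essentially the same route as the paper: the resolvent identity yields the $O_z(\vertiii{\Sigma_1-\Sigma_2})$ bounds on $\vertiii{\cal G_{\Sigma_1}(z)-\cal G_{\Sigma_2}(z)}$ and $\verti{g_{\Sigma_1}(z)-g_{\Sigma_2}(z)}$, which are then fed into Proposition \ref{ApproxbgGDet} with $\tau=\mu_{\Sigma_2}$ and $\bf H=\cal G_{\Sigma_2}$ (the paper additionally cites Theorem \ref{72}, but that bound is already absorbed into the proof of the proposition). Your closing observation about uniformity of the implicit constants via the moment dependence in Lemma \ref{lem66} is a sound point that the paper leaves implicit.
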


\begin{proof} Using a resolvent identity we have:
\begin{align*}
   \vertiii{ \cal G_{\Sigma}(z) - \cal G_{{\Sigma}'}(z)} \leq  \vertiii{ \cal G_{\Sigma}(z)} \vertiii{{\Sigma}' - {\Sigma}}\vertiii{ \cal G_{\Sigma}(z)} \leq \frac{\vertiii{{\Sigma} - {\Sigma}'}}{\Im(z)^2}. \end{align*} In particular $\verti{g_{\Sigma}(z)-g_{{\Sigma}'(z)}}\leq \vertiii{ \cal G_{\Sigma}(z) - \cal G_{{\Sigma}'}(z)}\leq O_z\pt{\vertiii{{\Sigma} - {\Sigma}'}}$. The result follows from Theorem \ref{72} and Proposition \ref{ApproxbgGDet}. 
\end{proof}

\subsection{Concentration of deterministic equivalents built from random matrices}
If ${\Sigma}$ is random and satisfies a typical $\cal E(1/\sqrt n)$ Lipschitz concentration property, from the approximations $\E \br{g_{\Sigma}(z)}\approx g_\tau(z) $ and $\E\br{\cal G_{\Sigma}(z)} \approx \bf H(z)$, we may deduce that $g_{\Sigma}(z) \approx g_\tau(z)$ a.s., but we cannot expect that $\cal G_{\Sigma}(z) \approx \bf H(z)$ since $\cal G_{\Sigma}(z)$ and $\E\br{\cal G_{\Sigma}(z)}$ are not necessarily close in spectral norm. We can however prove that $\bf G_\boxtimes^{\Sigma}(z)$ is linearly concentrated around $\bf K(z)$.

\begin{prop} \label{ApproxbgGRnd} Let ${\Sigma} \in \R^{n \times n}$ be random positive semi-definite matrices such that ${\Sigma} \propto_{\vertif{\cdot}} \cal E(1/\sqrt n)$. Then: \begin{enumerate}
    \item $g_{\nu}(z) \propto \cal E(O_z(1/ n))$ and $\bf G_\boxtimes^{\Sigma}(z)\propto_{\vertiii{\cdot}} \cal E(O_z(1/\sqrt n))$.
    \item If $\tau$  are deterministic probability measures supported on $\R^+$ such that $\verti{\E\br{g_{\Sigma}(z)}-g_\tau(z)}\leq O_z(\epsilon_n)$, then $\verti{g_\nu(z)-g_\chi(z) }\leq O_z(\epsilon_n+\sqrt{\log n} / n)$ a.s. 
    \item If in addition $\bf H:\C^+ \to \C^{n \times n}$ are deterministic complex functions such that $\vertiii{\E\br{\cal G_{\Sigma}(z)}-\bf H(z)}\leq O_z(\epsilon_n')$, then $\bf G_\boxtimes^  {\Sigma}(z) \in_{\vertiii{\cdot}} \bf K(z) \pm \cal E(O_z(\epsilon_n+\epsilon_n'+1/\sqrt n))$.
\end{enumerate} 
\end{prop}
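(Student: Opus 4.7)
The plan is to establish the three parts in order, leveraging the Lipschitz structure of $\Sigma \mapsto \bf G_\boxtimes^\Sigma(z)$ combined with the deterministic approximation results of the previous subsections. For part (1), I would first upgrade Corollary \ref{GboxLip} to the Frobenius norm: using the resolvent identity $\vertif{\cal G_\Sigma(z)-\cal G_{\Sigma'}(z)}\leq \vertiii{\cal G_\Sigma(z)}\vertif{\Sigma-\Sigma'}\vertiii{\cal G_{\Sigma'}(z)}\leq \vertif{\Sigma-\Sigma'}/\Im(z)^2$ and running the same three-term decomposition as in the proof of Proposition \ref{ApproxbgGDet}, one obtains $\vertif{\bf G_\boxtimes^\Sigma(z) - \bf G_\boxtimes^{\Sigma'}(z)} \leq O_z(\vertif{\Sigma - \Sigma'})$. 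Together with $\Sigma \propto_{\vertif{\cdot}} \cal E(1/\sqrt n)$ and items (2) and (7) of Proposition \ref{PropConc}, this yields $\bf G_\boxtimes^\Sigma(z) \propto_{\vertiii{\cdot}} \cal E(O_z(1/\sqrt n))$. For the Stieltjes transform I would use $g_\nu(z) = (1/n)\Tr \bf G_\boxtimes^\Sigma(z)$ together with item (9) of the same proposition: the normalized trace is $1/\sqrt n$-Lipschitz in Frobenius, so the composition is $O_z(1/\sqrt n)$-Lipschitz in Frobenius on $\Sigma$ and $g_\nu(z) \propto \cal E(O_z(1/n))$.

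For part (2), the plan is to transfer Theorem \ref{72} to the random setting through concentration of $g_\Sigma$. The same Lipschitz computation shows that $\Sigma \mapsto g_\Sigma(z)$ is $O_z(1/\sqrt n)$-Lipschitz in Frobenius, hence $g_\Sigma(z) \propto \cal E(O_z(1/n))$; Proposition \ref{PropConc}~(10) then gives $\verti{g_\Sigma(z) - \E[g_\Sigma(z)]} \leq O_z(\sqrt{\log n}/n)$ almost surely. Combined with the hypothesis $\verti{\E[g_\Sigma(z)] - g_\tau(z)} \leq O_z(\epsilon_n)$, we obtain $\verti{g_\Sigma(z) - g_\tau(z)} \leq O_z(\epsilon_n + \sqrt{\log n}/n)$ a.s. The claim then follows by applying Theorem \ref{72} realization by realization, since its proof only uses the Stieltjes transform comparison and is insensitive to whether $\mu_\Sigma$ is random or not.

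For part (3), item (4) of Proposition \ref{PropConc} combined with (1) already gives $\bf G_\boxtimes^\Sigma(z) \in_{\vertif{\cdot}} \E[\bf G_\boxtimes^\Sigma(z)] \pm \cal E(O_z(1/\sqrt n))$, hence also in spectral norm. By item (5) of the same proposition it therefore suffices to establish the deterministic bound $\vertiii{\E[\bf G_\boxtimes^\Sigma(z)] - \bf K(z)} \leq O_z(\epsilon_n + \epsilon_n' + 1/\sqrt n)$. My plan is to reuse the three-term decomposition from the proof of Proposition \ref{ApproxbgGDet}, take expectations, and bound each piece: the third term $z^{-1}l_{\check\chi}(\E[\cal G_\Sigma(l_{\check\chi})] - \bf H(l_{\check\chi}))$ is $O_z(\epsilon_n')$ in spectral norm by hypothesis and Lemma \ref{lem66}; the first two terms are bounded pointwise by $O_z(\verti{l_{\check\nu} - l_{\check\chi}})$, and the stability argument behind Theorem \ref{72} gives $\verti{l_{\check\nu} - l_{\check\chi}} \leq O_z(\verti{g_\Sigma(l_{\check\chi}) - g_\tau(l_{\check\chi})})$, whose expectation is controlled by $\verti{\E[g_\Sigma(l_{\check\chi})] - g_\tau(l_{\check\chi})} + \E\verti{g_\Sigma(l_{\check\chi}) - \E[g_\Sigma(l_{\check\chi})]} \leq O_z(\epsilon_n + 1/n)$, exactly as in part (2).

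The main obstacle I anticipate lies in part (3): unlike in Proposition \ref{ApproxbgGDet}, here $l_{\check\nu}$ is genuinely random, and one must commute expectation with the three-term decomposition while ensuring that the contraction estimate from the proof of Theorem \ref{72} applies to the random data. The stability condition $k_{\cal F}(1 + d(\cal F(l_{\check\chi}), l_{\check\chi})) < 1$ only holds on an event of large probability, and the complementary contribution to $\E[\cdot]$ must be absorbed using the a priori bound $\vertiii{\bf G_\boxtimes^\Sigma(z)} \leq 1/\Im(z)$ together with Lemma \ref{technicalLemmaO_z}, in the same spirit as the reductions used earlier in the paper.
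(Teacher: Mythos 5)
Your parts (1) and (2) are correct; part (1) is in fact a small improvement over what the paper writes. The paper derives $\bf G_\boxtimes^\Sigma(z)\propto_{\vertiii{\cdot}}\cal E(O_z(1/\sqrt n))$ from the spectral-norm Lipschitz bound of Corollary \ref{GboxLip} and then invokes the identity $\Tr\bf G_\boxtimes^\Sigma=n\,g_\nu$; since $(1/n)\Tr$ is only $1$-Lipschitz for the spectral norm, this yields $g_\nu\propto\cal E(O_z(1/\sqrt n))$, which is weaker than the $1/n$ rate announced in the statement. Your route — establishing a Frobenius-norm Lipschitz estimate $\vertif{\bf G_\boxtimes^\Sigma(z)-\bf G_\boxtimes^{\Sigma'}(z)}\leq O_z(\vertif{\Sigma-\Sigma'})$ by pairing the Cauchy--Schwarz bound $\verti{g_\Sigma(w)-g_{\Sigma'}(w)}\leq\vertif{\Sigma-\Sigma'}/(\sqrt n\,\Im(w)^2)$ with $\vertif{\cal G_\Sigma(w)}\leq\sqrt n/\Im(w)$ so the $\sqrt n$ factors cancel in each term of the decomposition — is what actually delivers $\bf G_\boxtimes^\Sigma(z)\propto_{\vertif{\cdot}}\cal E(O_z(1/\sqrt n))$ and hence $g_\nu(z)\propto\cal E(O_z(1/n))$ via Proposition \ref{prop202}(9). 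Part (2) coincides with the paper's argument.

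For part (3) you take a genuinely different route from the paper, and the comparison is instructive. You decompose $\E[\bf G_\boxtimes^\Sigma(z)]-\bf K(z)$, push the expectation through the three terms, and then conclude via the equivalence in Proposition \ref{prop202}(5). The paper instead keeps the decomposition random: on the full-measure event where part (2)'s a.s.\ bound holds, the stability argument gives $\verti{l_{\check\nu}-l_{\check\chi}}\leq O_z(\epsilon_n+\sqrt{\log n}/n)$ a.s., so the first two terms are bounded a.s.\ in spectral norm; the third term is handled by noting $\cal G_\Sigma(z)\in_{\vertiii{\cdot}}\bf H(z)\pm\cal E(O_z(\epsilon_n'+1/\sqrt n))$, transporting this through $z\mapsto l_{\check\chi}(z)$ via Lemma \ref{lem66}, and multiplying by the bounded scalar $z^{-1}l_{\check\chi}(z)$; the two are then combined using Proposition \ref{prop202}(6) (a.s.\ bounded perturbation preserves linear concentration). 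Your expectation route is workable, but it forces you to integrate the random quantity $\verti{l_{\check\nu}-l_{\check\chi}}$ over the event where the contraction estimate fails — precisely the obstacle you flag — which must be absorbed using the $O_z(1)$ a priori bound on $\verti{l_{\check\nu}-l_{\check\chi}}$ from Lemma \ref{lem66} and the Gaussian tail of $g_\Sigma(l_{\check\chi})$. The paper's route sidesteps this entirely because the a.s.\ bound makes the stability condition hold simultaneously on the same full-measure set, so nothing needs to be integrated. Both reach the same estimate; the paper's version is the cleaner one, and your flagged concern is exactly where the expectation route costs extra work.
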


\begin{proof} We refer to Proposition \ref{PropConc} for the properties of Lipschitz and linear concentration used in this proof. The map ${\Sigma} \mapsto  \bf G_\boxtimes^  {\Sigma} $ is $O_z(1)$ Lipschitz with respect to the spectral norm, thus $\bf G_\boxtimes^  {\Sigma} \propto_{\vertiii{\cdot}} \cal E(O_z(1/\sqrt n))$. Remembering the identity $\Tr \bf G_\boxtimes ^{\Sigma} = n g_{\nu}$, we also deduce that  $g_{\nu} \propto \cal E(O_z(1/\sqrt n))$. 

The map ${\Sigma} \mapsto \cal G_{\Sigma} $ is $1/\Im(z)^2$ Lipschitz, thus $\cal G_{\Sigma} \propto_ {\vertif{\cdot}} \cal E(O_z(1/\sqrt n))$ and $g_{\Sigma} \propto \cal E(O_z(1/ n))$. We deduce that $ \verti{\E\br{g_{\Sigma}}-g_{\Sigma} }\leq O_z(\sqrt{\log n} / n)$ a.s., hence $\verti{g_{\Sigma}-g_\tau }\leq O_z(\epsilon_n+\sqrt{\log n} / n)$ a.s. We can apply Theorem \ref{72} uniformly in this set of full measure, and obtain that $\verti{g_\nu-g_\chi }\leq O_z(\epsilon_n+\sqrt{\log n} / n)$ a.s.

In the decomposition:
\begin{align*}
     \bf G_\boxtimes^  {\Sigma}(z) - \bf K(z) &= z^{-1} {  \pt{ l_{\check \nu}  -   l_{\check \chi}   } \cal G_{\Sigma}\pt{l_{\check \nu}}}  \\ 
     &\quad + z^{-1} l_{\check \chi}  \pt{ \cal G_{\Sigma}\pt{l_{\check \nu}} - \cal G_{\Sigma}\pt{l_{\check \chi}} }  \\
     &\qquad  + z^{-1} l_{\check \chi}   \pt{ \cal G_{\Sigma}\pt{l_{\check \chi} } -  \bf H\pt{l_{\check \chi}  }},
\end{align*}the first two terms are bounded by $O_z(\epsilon_n+\sqrt{\log n} / n)\leq O_z(\epsilon_n+1 / \sqrt n )$ a.s. in spectral norm (see the proof of Proposition \ref{ApproxbgGDet}). For the third term, $\cal G_{\Sigma} (z) \propto_ {\vertif{\cdot}} \cal E(O_z(1/\sqrt n))$ and $\vertiii{\E\br{\cal G_{\Sigma}(z)}-\bf H(z)}\leq O_z(\epsilon_n')$, thus $\cal G_{\Sigma} (z)\in_ {\vertiii{\cdot}} \bf H (z) \pm \cal E(O_z(\epsilon_n' + 1/\sqrt n))$. From Lemma \ref{lem66}, we also have $ \cal G_{\Sigma}\pt{l_{\check \chi} } \in_{\vertiii{\cdot}}   \bf H\pt{l_{\check \chi}  } \pm \cal E(O_z(\epsilon_n' + 1/\sqrt n))$. Finally $\verti{z^{-1} l_{\check \chi}}\leq 1/\Im(z)$, and combining the above estimates and concentration properties leads to $\bf G_\boxtimes^  {\Sigma} (z)\in_{\vertiii{\cdot}}   \bf K(z) \pm \cal E(O_z(\epsilon_n+\epsilon_n'+1/\sqrt n))$. \end{proof}

\newpage

\section{Single-layer neural network with deterministic data} \label{Sec5}

\subsection{Setting} 
In this section we consider the Conjugate Kernel matrix associated to a single-layer artificial neural network with deterministic input. The model is made of:
\begin{itemize}
    \item a random weight matrix $W \in \R^{d \times d_0}$, with variance parameter $\sigma_W^2>0$,
    \item a deterministic data matrix $X \in \R^{ d_0 \times n}$, $\sigma_X^2 > 0$,
    \item two random biases matrices $B$ and $D \in \R^{d \times n }$, $\sigma_B^2,\sigma_D^2 \geq 0$,
    \item and an activation function $f : \R \to \R$.
\end{itemize} As output of the neuron, we set $Y =f(WX/\sqrt{d_0}+B)+D \in \R^{d \times n}$, where the function $f$ is applied entry-wise. Our goal is to investigate the spectral properties of the Conjugate Kernel matrix $K= Y^\top Y / d$. For $z \in \C^+$ we define its resolvent $\cal G_{K}(z)=\pt{K - z I_n}^{-1}$ and Stieltjes transform $g_K(z) =(1/n)  \Tr \cal G_{K}(z) $. We also define the following objects:
\begin{align*}      \tilde \sigma^2 &= \sigma_W^2 \sigma_X^2 + \sigma_B^2 , \\
    \tilde f (t)&= f(\tilde \sigma t), \\ 
    \frak a &=  {\vertih{\tilde f}^2  -  \frac{\sigma_W^2\sigma_X^2}{ \tilde \sigma^2} \zeta_1(\tilde f)^2 + \sigma_D^2 }, \\
    \frak b &= \zeta_1(\tilde f)^2 \frac{\sigma_W^2}{\tilde \sigma^2},\\K_X &= X^\top X /{ d_0}, \\ \Delta_X &= K_X- \sigma_X^2 I_n, \\ \Sigma &= \E\br{K}, \\
 \Sigmal &= \frak a I_n +  \frak b K_X.
\end{align*}
\begin{rem} We always have $\frak a \geq  0$ since $\zeta_1(f)^2 \leq \vertih{\tilde f}^2 $ and $\sigma_W^2\sigma_X^2 \leq \tilde \sigma^2$. Moreover $\frak a=0$ if and only if $f$ is a linear function and there is no bias in the model (that is $\sigma_B^2=\sigma_D^2=0$).\end{rem} Like in the rest of this article, we sometimes omit the indices $n$ and $z$ for a better readability, even if we are implicitly dealing with sequences of matrices, measures, and complex functions.

\begin{ass} \label{ANNDetAss}
\begin{enumerate} \item $W$, $B$ and $D$ are random, independent, with i.i.d. $\cal N(\sigma_W^2)$, $\cal N(\sigma_B^2)$ and $\cal N(\sigma_D^2)$ entries respectively.     \item $\tilde f$ is Lipschitz continuous and Gaussian centered, that is $\E\br{\tilde f(\cal N)}=\E\br{f(\tilde \sigma \cal N)}=0$.
    \item $X$ is deterministic and $\vertiii{K_X}$ is bounded.
     \item $\vertii{\vec{\rm{diag}}(\Delta_X)}$ is bounded and $\vertim{\Delta_X} $ converges to $0$.
     \item The ratio ${\gamma_n}= \frac n d$ is bounded from above and away from $0$.
\end{enumerate}
\end{ass} 

We refer to the Remark \ref{ANNDetAssrem1}  for a detailed discussion about the assumption (4).
The main result 
of this section is Theorem \ref{ANNDetDetEq} that gives a deterministic equivalent for $\cal G_K(z)$ and $g_K(z)$. To prove this result, we will combine the general results on resolvent matrices recalled in Section \ref{Sec4}, with the linearization techniques of Section \ref{Sec3}. We will wrap up this section by applying our framework to a simple yet original model having weakly correlated entries.

\subsection{Technicalities and linearization of $\Sigma$}   

\begin{prop} \label{propa} Under Assumptions \ref{ANNDetAss}, $Y \propto_{\vertif{\cdot}} \cal E(1)$. The rows of $Y$ are i.i.d. sampled from the distribution of a random vector $y=f(X^\top w/\sqrt {d_0} +b)+\tilde b$, where $w \in \R^{d_0}$ and $b,\tilde b \in \R^n$ are independent Gaussian vectors with i.i.d. $\cal N(\sigma_W^2)$, $\cal N(\sigma_B^2)$ and $\cal N(\sigma_D^2)$ coordinates respectively. $\vertii{\E\br{y}}$ is moreover bounded.
\end{prop}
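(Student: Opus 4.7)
The plan is to verify the three claims of the proposition (row structure, Lipschitz concentration, bounded expectation of $y$) by combining the Gaussianity of $(W, B, D)$ with the Lipschitz property of $f$ and the Hermite-based expansions from Section~\ref{Sec3}.

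\textbf{Row structure.} I first rewrite $Y = f(WX/\sqrt{d_0} + B) + D$ row by row. If $W_i$, $B_i$, $D_i$ denote the $i$-th rows of $W$, $B$, $D$, then the $i$-th column of $Y^\top$ is $f(X^\top W_i^\top/\sqrt{d_0} + B_i^\top) + D_i^\top$, matching the announced form $y = f(X^\top w/\sqrt{d_0} + b) + \tilde b$ after setting $w = W_i^\top$, $b = B_i^\top$, $\tilde b = D_i^\top$. Independence of the Gaussian entries yields that the triples $(W_i, B_i, D_i)$ are i.i.d. and mutually independent, so the rows of $Y$ are i.i.d. with the stated law.

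\textbf{Lipschitz concentration.} The random matrix $(W, B, D)$ is a linear image of a matrix with i.i.d. standard Gaussian entries under a linear map of bounded operator norm (determined by the fixed variances $\sigma_W$, $\sigma_B$, $\sigma_D$), hence $(W, B, D) \propto_{\vertif{\cdot}} \cal E(1)$ by Proposition~\ref{PropConc}(1)--(3). It remains to check that the map $(W, B, D) \mapsto Y$ is $O(1)$-Lipschitz in Frobenius norm. Since $\vertiii{X}/\sqrt{d_0} = \vertiii{K_X}^{1/2}$ is bounded by Assumption~\ref{ANNDetAss}(3), the linear step $W \mapsto WX/\sqrt{d_0}$ is $O(1)$-Lipschitz; adding $B$ and $D$ contributes $1$-Lipschitz perturbations, and the entry-wise application of the Lipschitz function $f$ preserves the Lipschitz constant in Frobenius norm. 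Composing and applying Proposition~\ref{PropConc}(2) gives $Y \propto_{\vertif{\cdot}} \cal E(1)$.

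\textbf{Bounded expectation.} Since $\tilde b$ is centered and independent, $\E[y_j] = \E[f((X^\top w)_j/\sqrt{d_0} + b_j)]$, and the argument is a centered Gaussian of variance $s_j^2 = \sigma_W^2 (K_X)_{jj} + \sigma_B^2 = \tilde\sigma^2 + \sigma_W^2 (\Delta_X)_{jj}$. Writing $\alpha_j = s_j/\tilde\sigma$, Corollary~\ref{Expansionzeta} applied to $\tilde f$ yields
\[
\E[y_j] = \zeta_0(\tilde f_{\alpha_j}) = \zeta_0(\tilde f) + O(\alpha_j - 1) = O((\Delta_X)_{jj}),
\]
where I crucially use the centering $\zeta_0(\tilde f) = 0$ from Assumption~\ref{ANNDetAss}(2), together with $\alpha_j - 1 = O((\Delta_X)_{jj})$ uniformly in $j$ because $\vertim{\Delta_X} \to 0$. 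Summing the squares yields $\vertii{\E[y]}^2 \leq O(\vertii{\vec{\rm{diag}}(\Delta_X)}^2) = O(1)$ by Assumption~\ref{ANNDetAss}(4).

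The main point of subtlety, though not a serious obstacle, is this last step: it is the Gaussian centering of $\tilde f$ (not of $f$ directly) that makes the first-order Taylor expansion around $\alpha_j = 1$ produce the required $O((\Delta_X)_{jj})$ bound, and it is the $\ell^2$ control on $\vec{\rm{diag}}(\Delta_X)$, rather than merely a pointwise bound on $\vertim{\Delta_X}$, that converts this coordinate-wise estimate into the claimed bound on $\vertii{\E[y]}$.
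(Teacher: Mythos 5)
Your proof is correct and follows essentially the same route as the paper: identify the i.i.d. row structure, combine Gaussian concentration of $(W,B,D)$ with the Lipschitz map bound using $\vertiii{X/\sqrt{d_0}}$, and use the first-order expansion of Corollary~\ref{Expansionzeta} together with $\zeta_0(\tilde f)=0$ and the $\ell^2$ control on $\vec{\rm{diag}}(\Delta_X)$ to bound $\vertii{\E[y]}$. The only cosmetic difference is that you parametrize by $\alpha_j = s_j/\tilde\sigma$ rather than passing through the paper's covariance matrix $S = (\sigma_W^2 K_X + \sigma_B^2 I_n)/\tilde\sigma^2$, but the computation is identical.
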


\begin{proof}    
The map $(W,B,D) \mapsto f(WX/\sqrt {d_0}+B)+D$ is Lipschitz with respect to the product Frobenius norm since $f$ is Lipschitz and $\vertiii{X/\sqrt {d_0}}$ is bounded. Given the Gaussian concentration $(W,B,D) \propto_{\vertif{\cdot}} \cal E(1)$, we immediately obtain that $Y \propto_{\vertif{\cdot}} \cal E(1)$. From the expression:
\[Y_{ij} = f\pt{\sum_{k=1}^{d_0} W_{ik}X_{kj}/\sqrt {d_0}+B_{ij}}+D_{ij},\] we see that the rows of $Y$ are independent and have the same distribution as $y = f\pt{X^\top w /\sqrt {d_0}+ b}+\tilde b$.  For the last statement, we may write $y = \tilde f(u)+\tilde b$, where $u$ is a centered Gaussian vector with covariance matrix $S = \pt{\sigma_W^2 K_X + \sigma_B^2 I_n}/{\tilde \sigma^2}$. We have $S - I_n = (\sigma_W^2 / \tilde \sigma^2) \Delta_X$, hence the  random variables $u_i$ are centered Gaussian with covariance $1 + O\pt{(\Delta_X)_{ii}}$. Since $\zeta_0(\tilde f)=\E\br{f(\tilde \sigma \cal N)}=0$, using the first order expansion given by Corollary \ref{Expansionzeta} applied to the function $\tilde f$, we have $\E\br{\tilde f(u_i)}=O\pt{(\Delta_X)_{ii}}$ uniformly on $i \in \lint 1 , n \rint$. We deduce that $\vertii{\E\br{y}} =   \vertii{\E\br{\tilde f(u)}} =  O\pt{\vertii{\vec{\rm{diag}}(\Delta_X)}}=O(1)$.
\end{proof}

\begin{cor} \label{propb}Under Assumptions \ref{ANNDetAss}, $  \vertiii{\Sigma }$ is bounded, and moreover:
\begin{align*}
    \vertiii{\Sigma - \Sigmal } &\leq O(\vertim{\Delta_X}+n  \zeta_2(\tilde f)^2 \vertim{\Delta_X}^2 +n \zeta_3(\tilde f)^2 \vertim{\Delta_X}^3 ), \\
    \verti{g_\Sigma(z)-g_\Sigmal(z)}&\leq  O(\vertim{\Delta_X}+\sqrt n  \zeta_2(\tilde f)^2 \vertim{\Delta_X}^2 +\sqrt n \zeta_3(\tilde f)^2 \vertim{\Delta_X}^3 ).
\end{align*}
\end{cor}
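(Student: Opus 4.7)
The plan is to cast the covariance $\Sigma$ into the framework of Section~\ref{Sec3} and then invoke Proposition~\ref{SigmaLinNorm} directly. From Proposition~\ref{propa}, the rows of $Y$ are i.i.d.\ copies of $y=\tilde f(u)+\tilde b$, where $u\in\R^n$ is centered Gaussian with covariance $S=(\sigma_W^2 K_X+\sigma_B^2 I_n)/\tilde\sigma^2$ and $\tilde b\sim\cal N(0,\sigma_D^2 I_n)$ independent of $u$. Since $\tilde b$ is centered and independent of $\tilde f(u)$, taking the expectation row by row gives
\[
\Sigma=\E\br{yy^\top}=\E\br{\tilde f(u)\tilde f(u)^\top}+\sigma_D^2 I_n.
\]
By construction $S_{ii}=1+(\sigma_W^2/\tilde\sigma^2)(\Delta_X)_{ii}$, so the matrix $\Delta:=S-I_n=(\sigma_W^2/\tilde\sigma^2)\Delta_X$ plays exactly the role of the matrix $\Delta$ in Section~\ref{Sec3}.

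Next I would check that Assumptions~\ref{AssFuncGaussVector2} hold for the pair $(\tilde f,u)$: $\tilde f$ is Lipschitz and Gaussian-centered by Assumption~\ref{ANNDetAss}(2); $I_n+\Delta=S$ is positive semi-definite as a positive combination of $K_X$ and $I_n$; $\vertiii{\Delta}\leq O(1)$ and $\vertii{\vec{\mathrm{diag}}(\Delta)}\leq O(1)$ by Assumption~\ref{ANNDetAss}(3)--(4); and $\vertim{\Delta}\leq O(\vertim{\Delta_X})\to 0$. Applying Proposition~\ref{SigmaLinNorm} to the covariance matrix of $\tilde f(u)$ yields a matrix
\[
\vertih{\tilde f}^2 I_n+\zeta_1(\tilde f)^2\,\frac{\sigma_W^2}{\tilde\sigma^2}\,\Delta_X
\]
that approximates $\E[\tilde f(u)\tilde f(u)^\top]$ with the exact quantitative estimates stated in Proposition~\ref{SigmaLinNorm}, with $\epsilon_n=\zeta_2(\tilde f)^2\vertim{\Delta}^2+\zeta_3(\tilde f)^2\vertim{\Delta}^3$, which is controlled by the corresponding expression in $\vertim{\Delta_X}$.

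Adding back the constant diagonal contribution $\sigma_D^2 I_n$ and using $\Delta_X=K_X-\sigma_X^2 I_n$, the approximant becomes
\[
\Big(\vertih{\tilde f}^2-\frac{\sigma_W^2\sigma_X^2}{\tilde\sigma^2}\zeta_1(\tilde f)^2+\sigma_D^2\Big)I_n+\zeta_1(\tilde f)^2\frac{\sigma_W^2}{\tilde\sigma^2}K_X=\frak a\,I_n+\frak b\,K_X=\Sigmal,
\]
so the spectral-norm and Stieltjes-transform bounds given by Proposition~\ref{SigmaLinNorm} transfer verbatim to $\vertiii{\Sigma-\Sigmal}$ and $\verti{g_\Sigma(z)-g_\Sigmal(z)}$, after substituting $\vertim{\Delta}\leq O(\vertim{\Delta_X})$. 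The boundedness of $\vertiii{\Sigma}$ is then a direct consequence: Proposition~\ref{SigmaLinNorm} guarantees $\vertiii{\Sigma-\Sigmal}$ is bounded, and $\vertiii{\Sigmal}\leq \frak a+\frak b\vertiii{K_X}$ is bounded under Assumption~\ref{ANNDetAss}(3).

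I do not foresee a real obstacle here; the work is essentially bookkeeping to match the notations of Section~\ref{Sec3} with the neural-network variables and to verify that the bias $D$ simply contributes a $\sigma_D^2 I_n$ term absorbed into $\frak a$. The only point requiring a little care is that the function to which Proposition~\ref{SigmaLinNorm} is applied must be $\tilde f$ (not $f$) so that the rescaling by $\tilde\sigma$ is built in and the Gaussian-centering hypothesis is the one already assumed.
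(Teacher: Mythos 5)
Your proof follows the same approach as the paper's own: both identify the rows of $Y$ as i.i.d.\ copies of $\tilde f(u)+\tilde b$ with $\Delta = (\sigma_W^2/\tilde\sigma^2)\Delta_X$, verify Assumptions~\ref{AssFuncGaussVector2}, apply Proposition~\ref{SigmaLinNorm} to $\tilde f$, and absorb the independent-bias contribution $\sigma_D^2 I_n$ and the $-\sigma_X^2 I_n$ shift of $\Delta_X$ into $\frak a$ to land on $\Sigmal = \frak a I_n + \frak b K_X$. The bookkeeping and the key insight (apply the linearization to $\tilde f$, not $f$, so the Gaussian-centering hypothesis is the one assumed) are the same as in the paper's proof.
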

\begin{proof} As seen in the last Proposition, the rows of $Y$ are i.i.d. sampled from the distribution of a vector $y = \tilde f(u)+\tilde b$ where $u$ is a centered Gaussian vector with covariance matrix $S = I_n + \Delta$, and $\Delta = (\sigma_W^2 / \tilde \sigma^2) \Delta_X$. The Assumptions \ref{AssFuncGaussVector2} are satisfied, and we deduce from Proposition \ref{SigmaLinNorm} that $  \vertiii{\Sigma - \Sigmal }$ is bounded. Since $\vertiii{K_X}$ is bounded and $\Sigmal = \frak a I_n +  \frak b K_X$, $  \vertiii{\Sigma }$ is also bounded. From the same proposition, we have the following estimates in spectral norm, with $\epsilon_n  =\vertim{\Delta_X}+n  \zeta_2(\tilde f)^2 \vertim{\Delta_X}^2 +n \zeta_3(\tilde f)^2 \vertim{\Delta_X}^3 $:
\begin{align*}
    \E\br{\tilde f(u) \tilde f(u)^\top } &= \vertii{\tilde f}^2 I_n + \zeta_1(\tilde f)^2 \Delta+ O(  \epsilon_n) \\
    &= \vertii{\tilde f}^2 I_n + \zeta_1(\tilde f)^2  (\sigma_W^2 / \tilde \sigma^2) \pt{K_X - \sigma_X^2 I_n }+O(  \epsilon_n)\\
   \Sigma =  \E\br{y y^\top} &=  \E\br{\tilde f(u) \tilde f(u)^\top } + \sigma_D^2 I_n \\
    &= \pt{\vertii{\tilde f}^2  -  \frac{\sigma_W^2 \sigma_X^2  }{\tilde \sigma^2} \zeta_1(\tilde f)^2 + \sigma_D^2 } I_n + \zeta_1(\tilde f)^2 \frac{\sigma_W^2}{\tilde \sigma^2}K_X + O(  \epsilon_n)\\
    &= \Sigmal + O(  \epsilon_n).
\end{align*}
The proof for the Stieltjes transforms is similar.
\end{proof}

\subsection{Propagation of the approximate orthogonality}
The contents of this paragraph will not be useful in this section, but rather in later stages of the article to study multi-layers networks by induction. Similar results may be found in \cite[Section D]{FW20} under the name of propagation of approximate orthogonality (see Remark \ref{ANNDetAssrem1} for more details on this concept), and proved by slightly different means.

\begin{lem}\label{PropOrth}
    Let $\sigma_Y^2 = \vertih{\tilde f}^2 + \sigma_D^2  $, and $\Delta_Y = K- \sigma_Y^2 I_n=Y^\top Y / d - \sigma_Y^2 I_n$. Under Assumptions \ref{ANNDetAss}, there exists an event $\cal B$ with $\Pp(\cal B^c) \leq c e^{-n/c} $ for some constant $c>0$, such that uniformly on $\cal B$, $\vertii{\vec{\rm{diag}}(\Delta_Y)}$ and $\vertiii{K}$ are bounded, and $\vertim{\Delta_Y} \leq O\pt{ \vertim{\Delta_X} + \sqrt{\log n / n }}$.
\end{lem}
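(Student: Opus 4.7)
The plan is to transfer the Lipschitz concentration $Y \propto_{\vertif{\cdot}} \cal E(1)$ provided by Proposition~\ref{propa} to the sample covariance $K$ via Proposition~\ref{ConcProd}, and then combine this with the entry-wise approximation of $\Sigma$ by $\Sigmal$ coming from Proposition~\ref{SigmaLinNorm}. Standard bounds (writing $Y = \E[Y] + (Y - \E[Y])$ and using that $\vertiii{\E[Y]} = \sqrt d\,\vertii{\E[y]} = O(\sqrt n)$ together with sample-covariance operator-norm bounds for the centered part) give $\E[\vertiii{Y}] \leq O(\sqrt n)$. Applying Proposition~\ref{ConcProd} to the pair $(Y^\top, Y)$ yields a base event $\cal B_0 = \{\vertiii{Y} \leq 2c\sqrt n\}$ with $\Pp(\cal B_0^c) \leq ce^{-n/c}$, on which $\vertiii{K} = \vertiii{Y}^2/d \leq O(1)$ is bounded and $(K\mid \cal B_0) \propto_{\vertif{\cdot}} \cal E(1/\sqrt n)$.

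For the entry-wise bound on $\Delta_Y$, the $1$-Lipschitz (in Frobenius norm) functionals $M \mapsto M_{ij}$ yield $(K_{ij}\mid \cal B_0) \propto \cal E(1/\sqrt n)$. Combining Proposition~\ref{PropConc}(10) with a union bound over the $n^2$ entries (tuning the implicit constant in the $O(\cdot)$) gives $\max_{i,j}|K_{ij} - \E[K_{ij}\mid\cal B_0]| \leq O(\sqrt{\log n / n})$ on a sub-event of $\cal B_0$ of high probability. A short Cauchy--Schwarz argument using polynomial moments of $K_{ij}$ shows that $|\E[K_{ij}\mid\cal B_0] - \Sigma_{ij}|$ is exponentially small. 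Applying the entry-wise bound of Proposition~\ref{SigmaLinNorm} gives $\vertim{\Sigma - \Sigmal} \leq O(\vertim{\Delta_X})$, and the explicit identities $(\Sigmal)_{ii} = \sigma_Y^2$ and $(\Sigmal)_{ij} = \frak b\,(\Delta_X)_{ij}$ for $i \neq j$ yield $\vertim{\Sigmal - \sigma_Y^2 I_n} \leq O(\vertim{\Delta_X})$. Chaining these four estimates produces the claimed bound on $\vertim{\Delta_Y}$.

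For the diagonal norm, decompose
\[ \vertii{\vec{\rm{diag}}(\Delta_Y)} \leq \vertii{\vec{\rm{diag}}(K - \Sigma)} + \vertii{\vec{\rm{diag}}(\Sigma - \sigma_Y^2 I_n)}. \]
The deterministic second term is controlled by the Lipschitz continuity of $\sigma \mapsto \vertih{\tilde f_\sigma}^2$: from $|S_{ii}^{1/2}-1| = O((\Delta_X)_{ii})$ one deduces $|\Sigma_{ii} - \sigma_Y^2| \leq O((\Delta_X)_{ii})$, so squaring and summing yields $O(\vertii{\vec{\rm{diag}}(\Delta_X)}) = O(1)$. For the random first term, the i.i.d.\ structure of the rows of $Y$ gives $\Var(K_{ii}) = \Var(y_i^2)/d = O(1/n)$ (since $y_i$ is a Lipschitz function of a Gaussian and hence has bounded fourth moment), so $\E[\vertii{\vec{\rm{diag}}(K - \Sigma)}^2] = \sum_i \Var(K_{ii}) = O(1)$. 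The functional $M \mapsto \vertii{\vec{\rm{diag}}(M - \Sigma)}$ is $1$-Lipschitz in Frobenius norm, so the Lipschitz concentration of $(K\mid\cal B_0)$ forces this quantity to concentrate around its conditional expectation (which is $O(1)$ by the $L^2$ bound) with error $O(\sqrt{\log n/n})$, which gives the $O(1)$ bound.

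The main obstacle is the diagonal-norm bound $\vertii{\vec{\rm{diag}}(\Delta_Y)} = O(1)$: a naive entry-wise estimate would give only $O(\sqrt n)$, so it is essential to combine the $L^2$ centering (from the variance computation) with the Lipschitz concentration of the full diagonal functional of $K$. The final event $\cal B$ is obtained by intersecting $\cal B_0$ with the various sub-events on which the above deviations are controlled.
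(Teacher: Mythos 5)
Your proposal is correct and follows essentially the same route as the paper: transfer concentration to $K$ via Proposition~\ref{ConcProd}, establish the entry-wise bound $\vertim{K-\Sigma}=O(\sqrt{\log n/n})$ by a union bound, compare $\Sigma$ to $\sigma_Y^2 I_n$ through Proposition~\ref{SigmaLinNorm}, and control $\vertii{\vec{\rm{diag}}(\Delta_Y)}$ by splitting into a deterministic mean-shift (via the Hermite expansion) plus a fluctuation. Your explicit variance computation $\sum_i \Var(K_{ii}) = O(n/d) = O(1)$ makes visible a term that the paper's displayed inequality $\E[\vertii{\vec{\rm{diag}}(\Delta_Y)}^2]\le \sum_i O((\Delta_X)_{ii}^2)$ leaves implicit in the concentration step, so on that point your argument is slightly more careful than the paper's.
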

\begin{proof} We have $\vertiii{\E\br{Y}}^2\leq \vertif{\E\br{Y}}^2 \leq d \vertii{\E[y]}^2\leq O(n)$. From Proposition \ref{ConcProd}, there exists a constant $c>0$ and an event $\cal B$ with $\Pp(\cal B^c) \leq c e^{-n/c} $, such that $\pt{K| \cal B} \propto_{\vertif{\cdot}} \cal E(1/\sqrt n)$, and $\vertiii{Y}\leq 2 c \sqrt n$ on $\cal B$, thus $\vertiii{K}$ is bounded on $\cal B$. We will check the other statements in expectation first, and in a second stage use concentration to obtain bounds on the random objects.

For any $i \in \lint 1,n\rint $, $Y_{ii}$ has the same distribution as $\tilde f(u_i)+\tilde b_i$, where $u_i$ and $\tilde b_i$ are centered Gaussian variables, independent, with variances $1 + O\pt{(\Delta_X)_{ii}}$ and $\sigma_D^2$ respectively. Using  the first order expansion given by Corollary \ref{Expansionzeta} applied to the function $\tilde f^2$, uniformly on $i \in \lint 1,n\rint $ we have:
\begin{align*}
    \E\br{Y_{ii}^2} &= \E\br{\pt{\tilde f(u_i)+b_i}^2} \\
    &= \zeta_0(\tilde f^2) + O\pt{(\Delta_X)_{ii}} + \sigma_D^2 \\
    &= \vertih{\tilde f}^2 + \sigma_D^2 + O\pt{(\Delta_X)_{ii}} \\
     &= \sigma_Y^2 + O\pt{(\Delta_X)_{ii}} .
\end{align*}
We deduce that $\E\br{\vertii{\vec{\rm{diag}}(\Delta_Y)}^2} \leq {\sum_{i=1}^n O\pt{(\Delta_X)_{ii}^2}} \leq O\pt{\vertii{\vec{\rm{diag}}(\Delta_X)}^2} \leq O(1)$. Since $\pt{\vec{\rm{diag}}(\Delta_Y) |\cal B}\propto_{\vertii{\cdot}} \cal E(1/\sqrt n)$, $\vertii{\vec{\rm{diag}}(\Delta_Y)-\E\br{\vec{\rm{diag}}(\Delta_Y)}}\leq O(1)$ a.s. on $\cal B$, and $\vertii{\vec{\rm{diag}}(\Delta_Y)}\leq O(1)$ a.s. on $\cal B$.

Finally $\vertim{K-\Sigma} \leq O\pt{\sqrt{\log n/n}}$ a.s. on $\cal B$ using the general properties of concentration (see Proposition \ref{PropConc}(10)), and from Proposition \ref{SigmaLinNorm}: \begin{align*}\vertim{\Sigma - \sigma_Y^2 I_n }&\leq \vertim{\Sigma-\Sigmal} + O\pt{ \vertim{\Delta_X}} \leq O\pt{ \vertim{\Delta_X} },\end{align*} which proves the result after a final triangular inequality.
\end{proof}

\subsection{Deterministic equivalent and consequences}

We let $\nu^\Sigmal = \mmp(\gamma_n) \boxtimes \mu_\Sigmal$, and we refer to the beginning of Section \ref{Sec4} for the definition of the deterministic equivalent matrix $\bf G_\boxtimes^{\Sigmal}(z)$.

\begin{rem}\label{Linthecaseb=0}  If $\frak b = 0$, then $\Sigmal$ does not depend on $X$ and the objects $\nu^\Sigmal$ and $\bf G_\boxtimes^{\Sigmal}(z)$ are fully explicit. Indeed $\Sigmal = \frak a I_n$, $\mu_\Sigmal = \delta_{\frak a} $, and $\nu^\Sigmal = \mmp(\gamma_n) \boxtimes \delta_{\frak a}  = \frak a \mmp(\gamma_n)$. Since $\Sigmal$ is a multiple of the identity matrix, so are $\cal G_\Sigmal(z)$ and $\bf G_\boxtimes^\Sigmal$, hence:
\begin{align*}
   \bf G_\boxtimes^{\Sigmal}(z) &= \pt{\frac 1 n \Tr  \bf G_\boxtimes^{\Sigmal}(z)} I_n = g_{\frak a \mmp(\gamma_n)}(z) I_n.
\end{align*}
Note that if $f \neq 0$, then $\frak a \neq 0$ and $g_{\frak a \mmp(\gamma_n)}(z)=\frak a {g_{\mmp(\gamma_n)}\pt{  z / \frak a }}$. 

In the case where $\frak  b \neq 0$, we can describe the deterministic equivalents as functions of $X$. Indeed $\mu_\Sigmal = \frak a + \frak b \mu_{K_X}$,  $\nu^\Sigmal = \mmp(\gamma_n) \boxtimes \pt{\frak a + \frak b \mu_{K_X}}$, and since $\cal G_\Sigmal(z) = \frac 1 {\frak b} \cal G_{K_X}\pt{\frac{z-\frak a}{\frak b}}$:
\begin{align*}   
    \bf G_\boxtimes ^{\Sigmal}(z) &=  z^{-1} l_{\check \nu^\Sigmal}(z)  \cal G_L\pt{  l_{\check \nu^\Sigmal}(z)  } \\
    &= \frac{ l_{\check \nu^\Sigmal}(z) }{\frak b z} \cal G_{K_X}\pt{ \frac{ l_{\check \nu^L}(z) - \frak a}{\frak b} } ,
\end{align*}where $\check \nu^\Sigmal = (1-\gamma_n ) \cdot\delta_0 + \gamma_n\cdot  \nu^\Sigmal$ and $l_{\check \nu^\Sigmal}(z) = -1/g_{\check \nu^\Sigmal}(z)$.
\end{rem}

For the next result let us denote:
\begin{align*}
        \epsilon_n &= \frac 1 n +\vertim{\Delta_X}+\sqrt n  \zeta_2(\tilde f)^2 \vertim{\Delta_X}^2 +\sqrt n \zeta_3(\tilde f)^2 \vertim{\Delta_X}^3 , \\  \epsilon'_n &= \frac 1 {\sqrt n} +\vertim{\Delta_X}+n  \zeta_2(\tilde f)^2 \vertim{\Delta_X}^2 +n \zeta_3(\tilde f)^2 \vertim{\Delta_X}^3 .
    \end{align*}
\begin{thm}\label{ANNDetDetEq} Uniformly under Assumptions \ref{ANNDetAss}, the following concentration properties hold true:
\begin{enumerate}
    \item $g_K (z) \propto  \cal E\pt{O_z(1/ n)}$ and  $\cal G_K (z) \propto_{\vertif{\cdot}}   \cal E\pt{O_z(1/\sqrt n)}$.
    \item $g_K(z) \in g_{\nu^L}(z)\pm \cal E\pt{O_z(\epsilon_n)}$ and $\cal G_K (z)\in_{\vertiii{\cdot}}  \bf G_\boxtimes ^{\Sigmal} (z) \pm \cal E\pt{O_z(\epsilon'_n)}$.
\end{enumerate}
\end{thm}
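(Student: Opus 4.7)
The plan is to reduce Theorem \ref{ANNDetDetEq} to the general deterministic equivalent Theorem \ref{DetEqSimpl} applied to the matrix $K=Y^\top Y/d$, and then transport the resulting equivalent from $\Sigma=\E[K]$ to its linearized version $\Sigmal$ using the regularity results of Section \ref{Sec4} together with the linearization bounds of Corollary \ref{propb}. The hypotheses of Theorem \ref{DetEqSimpl} are verified by Proposition \ref{propa}: $Y\propto_{\vertif{\cdot}}\cal E(1)$, the rows of $Y$ are i.i.d.\ with the law of a vector $y$ having $\vertii{\E[y]}$ bounded, and $\vertiii{\Sigma}$ is bounded thanks to Corollary \ref{propb}. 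The ratio $\gamma_n=n/d$ is controlled by Assumption \ref{ANNDetAss}(5).

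From Theorem \ref{DetEqSimpl} we then directly obtain the concentration statement (1), and also the preliminary equivalents
\[
g_K(z)\in g_{\nu^\Sigma}(z)\pm\cal E(O_z(1/n)),\qquad \cal G_K(z)\in_{\vertif{\cdot}}\bf G_\boxtimes^{\Sigma}(z)\pm\cal E(O_z(1/\sqrt n)),
\]
the second of which implies in particular $\cal G_K(z)\in_{\vertiii{\cdot}}\bf G_\boxtimes^{\Sigma}(z)\pm\cal E(O_z(1/\sqrt n))$ since $\vertiii{\cdot}\leq\vertif{\cdot}$.

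It now remains to replace $\Sigma$ by $\Sigmal$ in these equivalents. Corollary \ref{propb} gives the deterministic bounds $\verti{g_\Sigma(z)-g_{\Sigmal}(z)}\leq O(\epsilon_n)$ (in fact $O_z(\epsilon_n)$, trivially) and $\vertiii{\Sigma-\Sigmal}\leq O(\epsilon'_n)$. Applying Theorem \ref{72} to the pair $(\Sigma,\mu_{\Sigmal})$ transfers the former into the pointwise Stieltjes estimate $\verti{g_{\nu^\Sigma}(z)-g_{\nu^{\Sigmal}}(z)}\leq O_z(\epsilon_n)$. For the resolvent, Corollary \ref{GboxLip} (the spectral Lipschitz property of $\Sigma\mapsto\bf G_\boxtimes^{\Sigma}(z)$) yields $\vertiii{\bf G_\boxtimes^{\Sigma}(z)-\bf G_\boxtimes^{\Sigmal}(z)}\leq O_z(\epsilon'_n)$.

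Finally, since $\bf G_\boxtimes^{\Sigma}(z)$ and $\bf G_\boxtimes^{\Sigmal}(z)$ are both deterministic (as $X$ is deterministic, $\Sigma=\E[K]$ is deterministic), the equivalence property Proposition \ref{PropConc}(5) converts the deterministic comparisons above into the announced centered equivalents: $g_K(z)\in g_{\nu^{\Sigmal}}(z)\pm\cal E(O_z(\epsilon_n))$ and $\cal G_K(z)\in_{\vertiii{\cdot}}\bf G_\boxtimes^{\Sigmal}(z)\pm\cal E(O_z(\epsilon'_n))$, noting that $1/n\leq\epsilon_n$ and $1/\sqrt n\leq\epsilon'_n$ so the original concentration scales are absorbed. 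The only place where care is required is in the bookkeeping of polynomial factors in $z$, but the $O_z(\cdot)$ notation together with Lemma \ref{technicalLemmaO_z} makes this automatic; no genuine obstacle remains beyond concatenating the pieces already established in Sections \ref{Sec3} and \ref{Sec4}.
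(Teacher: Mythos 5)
Your proof is correct and follows essentially the same route as the paper's: verify Assumptions \ref{DetEqAss} via Proposition \ref{propa}, invoke Theorem \ref{DetEqSimpl} for part (1) and the preliminary equivalents around $\Sigma$, then transport to $\Sigmal$ via Corollary \ref{propb}, Theorem \ref{72}, and Corollary \ref{GboxLip}, finishing with Proposition \ref{PropConc}(5). Your explicit mention of Proposition \ref{PropConc}(5) and the absorption of the $1/n$ and $1/\sqrt n$ terms into $\epsilon_n$, $\epsilon'_n$ makes the last step slightly more transparent than the paper's terse ``which imply the linear concentration properties (2),'' but the substance is identical.
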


\begin{rem} \label{ANNDetAssrem1} Assumption \ref{ANNDetAss}(4) states roughly that the data matrix is close to being an orthogonal matrix (up to rescaling), a notion that was first introduced in \cite{FW20} under the name of approximate orthogonality. If this is true, the Conjugate Kernel model may be compared to a classical model derived from matrices with i.i.d. entries. To clarify this point, the assumption on $\vertim{\Delta_X}$ measures how the entries of $Y$ are dependent and how far they are from being standard Gaussian random variables individually. The additional bound on $\vertii{\vec{\rm{diag}}(\Delta_X)}$ ensures that the latter phenomenon occurs somewhat uniformly.

    In order for the deterministic equivalents to be meaningful, we need that $\epsilon_n$ converges to $0$ for the Stieltjes transforms, and that $\epsilon'_n$ converges to $0$ for the resolvent matrices. This is a stronger assumption that depends both on the data matrix $X$ and on the activation function $f$. More precisely, $K_X$ should not be too far from $I_n$ entry-wise, to an extent that also depends on how far $f$ is from acting linearly on Gaussians.
    
    In practice, for the Stieltjes transforms and a general activation function $f$ we need $\vertim{\Delta_X}=o\pt{n^{-1/4}}$. If $\zeta_2(\tilde f)=0$, which happens for instance if $f$ is odd symmetric, this convergence can be relaxed to $\vertim{\Delta_X} = o\pt{n^{-1/6}}$. If $\zeta_2(\tilde f)=\zeta_3(\tilde f)=0$, no additional hypothesis is required as  $\epsilon_n = \vertim{\Delta_X}$ converges already to $0$. For the resolvent matrices, the equivalent statements boil down to  $\vertim{\Delta_X}=o\pt{n^{-1/2}}$ in general, $\vertim{\Delta_X} = o\pt{n^{-1/3}}$ if $\zeta_2(\tilde f)=0$, and no additional hypothesis if $\zeta_2(\tilde f)=\zeta_3(\tilde f)=0$.
\end{rem}

\begin{proof}[Proof of Theorem \ref{ANNDetDetEq}] $Y$ satisfies the Assumptions \ref{DetEqAss} as seen in Proposition \ref{propa}. We deduce from Theorem \ref{DetEqSimpl} the Lipschitz concentration properties (1) and the linear concentration properties  $\cal G_K (z)\in_{\vertif{\cdot}}  \bf G_\boxtimes ^{\Sigma}(z)  \pm \cal E\pt{O_z(1/\sqrt n)}$ and $g_K (z)\in g_{\nu}(z)\pm \cal E\pt{O_z(1/n)}$. Moreover from Theorem \ref{72} and Corollaries \ref{propb} and \ref{GboxLip}:
\begin{align*}
   \verti{ g_{\nu^\Sigma}(z)-g_{\nu^\Sigmal}(z)} &\leq O_z\pt{\verti{g_\Sigma(z)-g_\Sigmal(z)}}\\
   &\leq O_z(\epsilon_n),\\
   \vertiii{ \bf G_\boxtimes^{\Sigma} (z)- \bf G_\boxtimes^{\Sigmal}(z)} & \leq O_z\pt{ \vertiii{\Sigma - \Sigmal}} \\&\leq O_z(\epsilon_n'),
\end{align*}which imply the linear concentration properties (2).
\end{proof}

Similarly to Corollary \ref{DetEqCor}, we may deduce from the deterministic equivalents the following spectral properties:
\begin{cor}\label{CorDetAnn}
   Uniformly under Assumptions \ref{DetEqAss}:
   \begin{enumerate}
       \item   $\verti{g_K(z) - g_{\nu^\Sigmal}(z)} \leq \sqrt {\log n} \, O_z(\epsilon_n) $ a.s. and     $\vertim{{\cal G_K(z) - \bf G_\boxtimes^\Sigmal(z) }}  
       \leq \sqrt {\log n} \, O_z(\epsilon_n')$ a.s.
    \item 
 If $f$ is not linear, or if is $f$ linear and the eigenvalues of $K_X$ are bounded from below,  there exists $\theta >0$ such that $D(\mu_K, \nu^\Sigmal) \leq O(\epsilon_n^{\theta})$ a.s. 
\item If moreover $\mu_{K_X}$ converges weakly to a measure $\mu_\infty$ and if $\gamma_n \to \gamma_ \infty$, then $\mu_K$ converges weakly to $\nu_\infty = \mmp(\gamma_\infty) \boxtimes \pt{\frak a + \frak b \mu_\infty}$, and more precisely:  \[ D(\mu_K,\nu_\infty) \leq O\pt{D(\mu_{K_X},\mu_\infty)+|\gamma_n-\gamma_\infty|+\epsilon_n^{\theta}} \quad \rm{a.s.}\]
   \end{enumerate}
\end{cor}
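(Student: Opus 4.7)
The plan is to mirror the three-step approach one would use for Corollary \ref{DetEqCor}, propagating Theorem \ref{ANNDetDetEq} first to pointwise almost sure estimates, then to a Kolmogorov distance estimate, and finally to weak limits.

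For part $(1)$, the idea is to combine the Lipschitz concentration around the expectation with the linear concentration around the deterministic equivalent. From Theorem \ref{ANNDetDetEq}$(1)$ and Proposition \ref{PropConc}$(10)$, one obtains $\verti{g_K(z) - \E[g_K(z)]} \leq O_z(\sqrt{\log n}/n)$ almost surely; Theorem \ref{ANNDetDetEq}$(2)$ combined with Proposition \ref{PropConc}$(5)$ yields $\verti{\E[g_K(z)] - g_{\nu^\Sigmal}(z)} \leq O_z(\epsilon_n)$. Since $\epsilon_n \geq 1/n$ by definition, the triangle inequality gives the first bound. For the entry-wise bound on the resolvent, I would first observe that for each $(i,j)$ the map $M \mapsto M_{ij}$ is $1$-Lipschitz in Frobenius norm, so each entry of $\cal G_K(z)$ has Gaussian tails with scale $O_z(1/\sqrt n)$; a union bound over the $n^2$ entries then yields $\vertim{\cal G_K(z) - \E[\cal G_K(z)]} \leq O_z(\sqrt{\log n/n})$ almost surely. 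Combining with $\vertim{\E[\cal G_K(z)] - \bf G_\boxtimes^{\Sigmal}(z)} \leq \vertiii{\E[\cal G_K(z)] - \bf G_\boxtimes^{\Sigmal}(z)} \leq O_z(\epsilon_n')$ and $\epsilon_n' \geq 1/\sqrt n$ concludes this step.

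For part $(2)$, the natural tool is Proposition \ref{OzandKol}, applied conditionally on the almost sure event produced in part $(1)$. The integrability and moment hypotheses are immediate from the compactness of the relevant supports (note that $\vertiii{\Sigmal}$ is bounded by Corollary \ref{propb}). The delicate point is the uniform Hölder continuity of $\cal F_{\nu^\Sigmal}$, which fails at the origin as soon as $\gamma_n > 1$ because a Dirac mass appears. This is the exact scenario addressed by Remark \ref{remHoldtrick}: transfer the estimate to $\check \nu^\Sigmal = (1-\gamma_n) \cdot \delta_0 + \gamma_n \cdot \nu^\Sigmal$, whose cumulative distribution function is uniformly $1/2$-Hölder continuous. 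The dichotomy in the hypothesis, namely that either $f$ is non-linear (so $\frak a > 0$) or $K_X$ has eigenvalues bounded away from zero, is precisely what guarantees in both cases that the spectrum of $\Sigmal = \frak a I_n + \frak b K_X$ is uniformly contained in a fixed compact of $(0,\infty)$, as required by that remark.

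For part $(3)$, a triangle inequality in Kolmogorov distance gives
\begin{align*}
    D(\mu_K, \nu_\infty) \leq D(\mu_K, \nu^\Sigmal) + D(\nu^\Sigmal, \nu_\infty),
\end{align*}
where the first term is controlled by part $(2)$. For the second, I would use Remark \ref{Linthecaseb=0} to write explicitly $\mu_\Sigmal = \frak a + \frak b \mu_{K_X}$, then bound the Stieltjes transform gap $\verti{g_{\nu^\Sigmal}(z) - g_{\nu_\infty}(z)}$ by Theorem \ref{72} applied with $\tau = \frak a + \frak b \mu_\infty$, plus an additional estimate that quantifies the dependence on $\gamma_n - \gamma_\infty$ by direct analysis of the self-consistent equation of Proposition \ref{prop63}. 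Converting back to Kolmogorov distance by reapplying the argument of Proposition \ref{OzandKol} together with Remark \ref{remHoldtrick} produces the announced quantitative bound. The main obstacle throughout is the subtle handling of the Dirac mass at the origin that appears in $\nu^\Sigmal$ whenever $\gamma_n > 1$; everything else is a matter of propagating the $O_z$ estimates provided by Sections \ref{Sec2}--\ref{Sec4}.
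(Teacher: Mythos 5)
Your parts (1) and (2) are essentially the same argument as the paper's, up to a harmless reshuffling in (1) (you decompose $g_K - g_{\nu^{\Sigma_{\rm lin}}}$ through $\E[g_K]$, whereas the paper applies the tail bound coming from the linear concentration of Theorem~\ref{ANNDetDetEq}(2) directly, plus a union bound and Borel--Cantelli over the $n^2$ entries for the resolvent; both yield the claimed $\sqrt{\log n}\,O_z(\cdot)$ estimates since $\epsilon_n \geq 1/n$ and $\epsilon_n' \geq 1/\sqrt n$). Your treatment of the Dirac mass at $0$ via $\check\nu^{\Sigma_{\rm lin}}$ and Remark~\ref{remHoldtrick}, and your observation that either branch of the hypothesis in (2) keeps the spectrum of $\Sigma_{\rm lin}$ in a fixed compact of $(0,\infty)$, are both what the paper does.

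Part (3) has a genuine gap. You propose to control $D\bigl(\nu^{\Sigma_{\rm lin}},\nu_\infty\bigr)$ by first bounding the Stieltjes transform gap (via Theorem~\ref{72} and a $\gamma$-perturbation of Proposition~\ref{prop63}), and then converting back to Kolmogorov distance via Proposition~\ref{OzandKol}. But Proposition~\ref{OzandKol} only gives $D \leq O(\delta_n^\theta)$ when the Stieltjes transforms differ by $O_z(\delta_n)$; applied here with $\delta_n = D(\mu_{K_X},\mu_\infty) + |\gamma_n - \gamma_\infty|$, this produces $D\bigl(\nu^{\Sigma_{\rm lin}},\nu_\infty\bigr) \leq O\bigl(\bigl(D(\mu_{K_X},\mu_\infty) + |\gamma_n - \gamma_\infty|\bigr)^\theta\bigr)$, which is strictly weaker than the announced linear bound (for $\theta<1$ and small $\delta_n$). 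The paper avoids this loss by invoking the $1$-Lipschitz (subadditivity) property of free multiplicative convolution in Kolmogorov distance, namely \cite[Proposition 4.13]{bercovici1993free}:
\begin{align*}
  D\bigl(\nu^{\Sigma_{\rm lin}},\nu_\infty\bigr) \leq D\bigl(\mmp(\gamma_n),\mmp(\gamma_\infty)\bigr) + D\bigl(\mu_{\Sigma_{\rm lin}},\frak a + \frak b\,\mu_\infty\bigr) \leq O\bigl(|\gamma_n-\gamma_\infty| + D(\mu_{K_X},\mu_\infty)\bigr),
\end{align*}
where the last step uses that the Kolmogorov distance is invariant under the affine map $t\mapsto \frak a + \frak b\,t$ and that $D(\mmp(\gamma_n),\mmp(\gamma_\infty)) = O(|\gamma_n-\gamma_\infty|)$. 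So the missing ingredient in your argument is precisely this direct Lipschitz control of $\boxtimes$ with respect to $D$, rather than a detour through the Stieltjes transform.
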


\begin{proof} By definition of the $O_z(\epsilon_n')$ notation, we may find $\alpha>0$ such that uniformly in $z \in \C^+$ with bounded $\Im(z)$, $\cal G_K (z)\in_{\vertiii{\cdot}}  \bf G_\boxtimes ^{\Sigmal} (z) \pm \cal E\pt{O(\epsilon'_n(z))}$ where $\epsilon'_n(z)=\epsilon'_n \frac{|z|^\alpha}{\Im(z)^{2 \alpha}}$. Let us now fix $z \in \C^+$. The maps $M \mapsto M_{ij}$ are linear and $1$-Lipschitz with respect to the spectral norm. By definition of the linear concentration, there are constants $C>0$ such that for any $n,t,i$ and $j$:
\[ \Pp\pt{\verti{\pt{\cal G_K(z)-\bf G_\boxtimes^\Sigmal(z)}_{ij}}\geq t} \leq C e^{- \frac{t^2}{C \epsilon'_n(z)^2} }.\]
We choose $t_n =\epsilon'_n(z)  \sqrt{4 C \log n }$, and we use  a union bound:
\begin{align*}
  \Pp\pt{  \vertim{{\cal G_K(z) - \bf G_\boxtimes^\Sigmal(z) }} \geq t_n} \leq n^2 C e^{- \frac{ t_n^2}{C \epsilon'_n(z)^2} } = C e^{2 \log n - 4 \log n} = C /  n^2.
\end{align*}
These probabilities are summable, so using Borel-Cantelli lemma:
\[ \vertim{{\cal G_K(z) - \bf G_\boxtimes^\Sigmal(z) }}\leq t_n \leq O\pt{ \sqrt {\log n} \, \epsilon'_n(z)} \quad \rm{a.s.} \] We deduce that $\vertim{{\cal G_K(z) - \bf G_\boxtimes^\Sigmal(z) }}  \leq \sqrt {\log n} \, O_z(\epsilon_n')$ a.s. The proof for the Stieltjes transforms is similar.

If $f$ is not linear, or if $f$ is linear and the eigenvalues of $K_X$ are bounded from below, then the eigenvalues of $\Sigmal$ are bounded from below. In this case, for values of $\gamma_n \leq 1$ the cumulative distribution function $\cal F_{\nu^\Sigmal}$ are uniformly Hölder continuous with exponent $\beta = 1/2$, and for values of $\gamma_n \geq 1$ the cumulative distribution function $\cal F_{\check \nu^\Sigmal}$ are uniformly Hölder continuous with exponent $\beta = 1/2$ (see \cite[Section 8.3]{moi1} and Remark \ref{remHoldtrick}). In both cases, from Proposition \ref{OzandKol} we deduce the second assertion.

 Finally for the third assertion, using the properties of free multiplicative convolution \cite[Proposition 4.13]{bercovici1993free} we deduce that $\nu^\Sigmal$ converges weakly to $\nu_\infty$, and that:
\begin{align*}
 D(\nu^\Sigmal,\nu^\infty) &\leq D(\mmp(\gamma_n),\mmp(\gamma_\infty))+D(\mu_\Sigmal,\frak a+ \frak b \mu_\infty) \\
&\leq O\pt{|\gamma_n-\gamma_\infty|+D(\mu_{K_X},\mu_\infty)}. 
\end{align*}
\end{proof}

\subsection{Application to another model involving entry-wise operations} \label{practicalex}

In this paragraph we show how our framework applies to other models of random matrices, not strictly related to artificial neural networks.

We consider $U \in \R^{n \times n}$ a Gaussian random matrix filled with $\cal N$ random variables, i.i.d. within the columns and weakly correlated within the rows. More precisely, we consider $u \in \R^n$ a Gaussian random vector, centered, with covariance matrix:
\[ S = \begin{pmatrix} 1 & 1/n & \dots & & 1/n \\ 1/n & 1 & 1/n && \vdots& \\
\vdots & 1/n & \ddots & & \\
& &  &\ddots & 1/n \\
1/n & \dots & & 1/n & 1\end{pmatrix}.\] We let $U$ be the random matrix with i.i.d. sampled columns from the distribution of $u$. Let $f=\tanh$ be the hyperbolic tangent function, $B$ and $D\in \R^{n \times n}$ independent random matrices filled with i.i.d. $\cal N$ random variables, and $Y = f(U+B)+D$. We want to apply the contents of this section to study the spectral properties of the sample covariance matrix $K= Y^\top Y / n$, its resolvent $\cal G_{K}(z)=\pt{K - z I_n}^{-1}$ and Stieltjes transform $g_K(z) =(1/n)  \Tr \cal G_{K}(z) $.

Let us denote by $J \in \R^{n \times n}$ the matrix whose entries are all equal to $1$, so that $S = I_n + (J-I_n)/n$. 
In law $U=XW$ where $W \in \R^{n \times n}$ is a matrix filled with i.i.d. $\cal N$ entries, independent from the other sources of randomness, and $X = S^{1/2}$. Up to a transposition in the independence structure, the model $Y$ is equal in law as $f(XW+B)+D$, and satisfies the Assumptions \ref{ANNDetAss}, with  $\Delta_X =  (J-I_n)/n$ and $\sigma_W^2=\sigma_X^2=\sigma_B^2=\sigma_D^2=1$. Indeed $\tilde \sigma^2 = 2$, $
    \tilde f(t) = \tanh(\sqrt 2 t)$, and by odd symmetry of $\tanh$ it is clear that $  \zeta_0(\tilde f)=\zeta_2(\tilde f)=0$. We also have $\vertiii{\Delta_X}=1-1/n$, $\vec{\rm{diag}}(\Delta_X)=0$, $\vertim{\Delta_X}=1/n$ and $\epsilon_n = O(1/n)$.
    
    The constants $\frak a$ and $\frak b$ are linked to Gaussian moments of the hyperbolic tangent function and may be numerically approximated. 
    The matrix $\Sigmal = \frak a I_p + \frak b  S = (\frak a + \frak b) I_p + \frak b (J-I_n) / n$ is explicitly diagonalizable and $\mu_\Sigmal =  \frac {n-1} n \cdot \delta_{\frak a + \frak b-\frak b / n} + \frac 1 n \cdot \delta_{\frak a + 2 \frak b -\frak b / n }$. The measure $\nu^\Sigmal  = \tilde \nu^\Sigmal = \mmp(1) \boxtimes \mu_L$ is characterized by its Stieltjes transform $g_{\nu^\Sigmal}(z)$, which is the only solution $g \in \C^+$ of the self-consistent equation:\begin{align*} g &= \int_\R \frac 1 {-zg t- z} \mu_\Sigmal(dt) \\
&= \frac 1 {-zg (\frak a + \frak b-\frak b / n)- z} + \frac 1 n \pt{  \frac 1 {-zg (\frak a + 2\frak b-\frak b / n)- z} - \frac 1 {-zg (\frak a + \frak b-\frak b / n)- z}}\end{align*}
This equation may be rewritten as a cubic polynomial equation and solved explicitly. The deterministic equivalent matrix $\bf G_\boxtimes^\Sigmal(z)$ may also be explicitly computed. The interested reader can check that:
\begin{align*}
     \bf G_\boxtimes^\Sigmal(z) 
     &= - \frac {  g_{\nu^\Sigmal}(z) \frak b  } {\pt{  g_{\nu^\Sigmal}(z) (\frak a + \frak b-\frak b / n) +1}\pt{  { g_{\nu^\Sigmal}(z) (\frak a + 2 \frak b-\frak b / n) +1}}} \frac J n \\
     &\quad      - \frac 1 {\pt{  g_{\nu^\Sigmal}(z) (\frak a + \frak b-\frak b / n) +1}} \frac {I_n} z 
\end{align*}

Theorem \ref{ANNDetDetEq} applied to this model provides the following deterministic equivalents: $g_K(z) \in g_{\nu^\Sigmal}(z)\pm \cal E\pt{O_z(1/n)}$, and $\cal G_K (z)\in_{\vertiii{\cdot}}  \bf G_\boxtimes ^{\Sigmal} (z) \pm \cal E\pt{O_z(1/\sqrt n)}$.

Since $\mu_\Sigmal$ converges weakly to $\delta_{\frak a + \frak b}$, $\nu^L$ and $\mu_K$ converge weakly to $\nu_\infty = \mmp(1) \boxtimes \delta_{\frak a + \frak b} = \pt{\frak a + \frak b} \mmp(1)$, and $D(\mu_K,\nu^\Sigmal)\leq O(n^{-\theta})$ for some $\theta>0$. To go further, Corollary \ref{CorDetAnn}(3) is not helpful since the measures are  discrete. As a matter of fact, $D\pt{\mu_\Sigmal,\delta_{\frak a + \frak b} }=1-1/n$ does not vanish. However we can compare directly the Stieltjes transforms of $\Sigmal$ and $\delta_{\frak a+ \frak b}$:
\begin{align*}
    \verti{g_{\Sigmal}(z)-g_{\delta_{\frak a+ \frak b}} (z)}&=\verti{ \frac{1-1/n} {\frak a + \frak b - \frak b/n - z}+ \frac {1/n} {\frak a + 2\frak b - \frak b/n - z} - \frac{1}{\frak a + \frak b - z} } \\
    &\leq O_z(1/n)
\end{align*}
From Theorem \ref{72}, we have $\verti{g_{\nu^\Sigmal(z)}-g_{\nu^\infty}(z)}\leq O_z(1/n)$, hence $D(\nu^\Sigmal,\nu^\infty)\leq O(n^{-\theta})$ for some $\theta>0$ from Proposition \ref{OzandKol}. We conclude that $\mu_K$ converges to $\nu_\infty$ as speed $O(n^{-\theta})$ in Kolmogorov distance.

\newpage

\section{Single-layer neural network with random data} \label{Sec6}

\subsection{Setting} In this section we study the Conjugate Kernel matrix associated to a single-layer artificial neural network with random input. The hypothesis are thus the same as in the last section, excepted for the random data matrix. We consider:
\begin{itemize}
     \item a random weight matrix $W \in \R^{d \times d_0}$, with variance parameter $\sigma_W^2>0$,
    \item a random data matrix $X \in \R^{ d_0 \times n}$, $\sigma_X^2 > 0$,
    \item two random biases matrices $B$ and $D \in \R^{d \times n }$, $\sigma_B^2,\sigma_D^2 \geq 0$,
    \item and an activation function $f : \R \to \R$.
\end{itemize} As output of the neuron, we set $Y =f(WX/\sqrt{d_0}+B)+D \in \R^{d \times n}$, where the function $f$ is applied entry-wise. Our goal is to investigate the spectral properties of the Conjugate Kernel matrix $K= Y^\top Y / d$. For $z \in \C^+$ we define its resolvent $\cal G_{K}(z)=\pt{K - z I_n}^{-1}$ and Stieltjes transform $g_K(z) =(1/n)  \Tr \cal G_{K}(z) $. We also define the following objects:
\begin{align*}
    \tilde \sigma^2 &= \sigma_W^2 \sigma_X^2 + \sigma_B^2 , \\
    \tilde f (t)&= f(\tilde \sigma t), \\ 
    \frak a &=  {\vertih{\tilde f}^2  -  \frac{\sigma_W^2\sigma_X^2}{ \tilde \sigma^2} \zeta_1(\tilde f)^2 + \sigma_D^2 }, \\
    \frak b &= \zeta_1(\tilde f)^2 \frac{\sigma_W^2}{\tilde \sigma^2},\\K_X &= X^\top X / {d_0}, \\ \Delta_X &= K_X- \sigma_X^2 I_n, \\
 \Sigma_X &= \frak a I_n +  \frak b K_X.
\end{align*}

Like in the rest of this article, we sometimes omit the indices $n$ and $z$ for a better readability, even if we are implicitly dealing with sequences of matrices, measures, and complex functions. For reasons that we will understand later, we do not make hypothesis on the random matrix $X$ itself, but rather on $X$ conditioned with a high probability event (see the definition before Proposition \ref{ConcProd}). 

\begin{ass} \label{ANNRndAss} 
\begin{enumerate} \item $W$, $B$ and $D$ are random, independent, with i.i.d. $\cal N(\sigma_W^2)$, $\cal N(\sigma_B^2)$ and $\cal N(\sigma_D^2)$ entries respectively.    \item $\tilde f$ is Lipschitz continuous and Gaussian centered, that is $\E\br{\tilde f(\cal N)}=\E\br{f(\tilde \sigma \cal N)}=0$.
 \item $X$ is random, independent from $W$, $B$ and $D$.  There is an event $\cal B$ with $\Pp\pt{\cal B^c} \leq O(\sqrt{\log n}/n)$, such that  $ (X|\cal B) \propto_{\vertif{\cdot} }\cal E(1)$.
 \item There is a sequence $\epsilon_n$ converging to $0$, such that uniformly in $\omega \in \cal B$, $\vertiii{K_X}$ and $\vertii{\vec{\rm{diag}}(\Delta_X)}$ are bounded, and $\vertim{\Delta_X} \leq O(\epsilon_n)$.
     \item The ratio $\gamma_n= \frac  n d$ is bounded from above and away from $0$.
     \item There is a sequence $\hat \epsilon_n\geq 0$ such that $\verti{\E\br{g_{K_X}(z)} - g_\tau (z)} \leq O_z(\hat\epsilon_n)$ for some sequence of measures $\tau$ supported on $\R^+$.
    \item There is a sequence $\hat \epsilon'_n\geq \hat \epsilon_n$ such that $\vertiii{\E\br{\cal G_{K_X}(z)} - \bf H (z)} \leq O_z(\hat\epsilon'_n)$ for some sequence of matrix functions $\bf H : \C^+ \to \C^{n \times n}$ satisfying $\vertiii{\bf H}\leq 1 /\Im(z)$.
\end{enumerate}
\end{ass}

\begin{rem}\label{hpeyolo} Compared to Assumptions  \ref{ANNDetAss}, in the assertions (3) and (4) we ask the data matrix $X$ to be independent from the other random matrices, well concentrated, and  uniformly approximately orthogonal on an event of high probability.

Let us explain why $\Pp\pt{\cal B^c} \leq O(\sqrt{\log n}/n)$ is a convenient choice to simplify our results. We claim that, up to a $O_z(\sqrt{\log n}/n)$ error that will blend into similar terms, it will be equivalent for us to compute expectations on the whole probability set or on $\cal B$. Indeed, if a random function $\zeta : \N \times \C^+ \to \C$ satisfies the \emph{a priori} bound $\zeta(n,z) \leq 1/\Im(z)$, then we have:
\begin{align*}
    \verti{\E_{\cal B}\br{\zeta } -\E\br{\zeta}}&= \verti{\E\br{\bf 1_{\cal B}\zeta} / \Pp(\cal B) -\E\br{\zeta}} \\
    &= \frac {\verti{\E\br{\bf 1_{\cal B^c}\zeta} + \E\br{\zeta}\Pp(\cal B^c)} } {\Pp(\cal B)}\\
    &\leq \frac{2\Pp\pt{\cal B^c}}{1-\Pp\pt{\cal B^c}} \frac 1 {\Im(z)} \\ &\leq  O_z(\Pp\pt{\cal B^c}) \leq O_z(\sqrt{\log n}/n).
\end{align*}

The assertions (6) and (7) correspond   to deterministic equivalents for the Stieltjes transform and the resolvent of $K_X$. If $\zeta_1(\tilde f) = 0$ then $\frak b =0$, and as seen in Remark \ref{Linthecaseb=0} the deterministic equivalents will not depend on $X$. In this case $\tau$ and $\bf H(z)$ will not appear in the results and the assertions (6) and (7) are in essence  empty.
\end{rem}

 \subsection{Deterministic equivalent and consequences}
 
We let $\nu^\Sigmax = \mmp(\gamma_n) \boxtimes \mu_\Sigmax$, and we refer to the beginning of Section \ref{Sec4} for the definition of the deterministic equivalent matrix $\bf G_\boxtimes^{\Sigmax}(z)$.
Similarly to the process used to express $\bf G_\boxtimes^\Sigmax(z)$ as a function of $\mu_{K_X}$ and $\cal G_{K_X}$ (see Remark \ref{Linthecaseb=0}), we define the objects:
\begin{align*}
    \chi &= \mmp\pt{\gamma_n } \boxtimes \pt{\frak a + \frak b \tau},\\
    \check \chi  &= (1-\gamma_n ) \cdot\delta_0 + \gamma_n \cdot \chi, \\
    l_{\check \chi}(z) &= -1/g_{\check \chi}(z), \\
     \bf K(z) &=\frac{ z^{-1} l_{\check \chi}(z) }{\frak b} \bf H\pt{\frac{l_{\check \chi}(z) - \frak a}{\frak b} } &\rm{if} \, \frak b \neq 0 ,\\
     \bf K(z) &=g_{\frak a \mmp(\gamma_n)}(z) I_n &\rm{if} \, \frak b = 0.
\end{align*}

\begin{lem} \label{LemEbfG}Under Assumptions \ref{ANNRndAss}, $\verti{\E\br{g_{\nu^\Sigmax}(z)}-g_\chi(z)} \leq \zeta_1(\tilde f) O_z(\hat\epsilon_n+ \sqrt{\log n}/n)$, and 
    $\vertiii{\E\br{\bf G_\boxtimes^  \Sigmax(z)} - \bf K(z) } \leq \zeta_1(\tilde f) O_z(\hat\epsilon_n'+1/\sqrt n) $.
\end{lem}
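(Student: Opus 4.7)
The plan is to reduce the problem to Proposition \ref{ApproxbgGRnd} applied to the random matrix $\Sigmax = \frak a I_n + \frak b K_X$. The degenerate case $\frak b = 0$ (equivalently $\zeta_1(\tilde f) = 0$) is immediate: then $\Sigmax = \frak a I_n$ is deterministic, $\nu^{\Sigmax} = \chi = \frak a \cdot \mmp(\gamma_n)$, and by direct inspection $\bf G_\boxtimes^{\Sigmax}(z) = \bf K(z) = g_{\frak a \mmp(\gamma_n)}(z) I_n$, so both inequalities hold with zero on each side and the $\zeta_1(\tilde f)$ prefactor poses no issue.

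In the remaining case $\frak b \neq 0$, my first task is to promote Assumption \ref{ANNRndAss}(3) into a Lipschitz concentration for $K_X$ on a slightly smaller event. Since $\vertiii{K_X}$ is bounded on $\cal B$, so is $\vertiii{X}/\sqrt n$, and on that region the map $X \mapsto X^\top X / d_0$ is $O(1/\sqrt n)$-Lipschitz in Frobenius norm. A conditioning argument in the spirit of Proposition \ref{ConcProd} then yields an event $\cal B' \subset \cal B$ with $\Pp((\cal B')^c) \leq O(\sqrt{\log n}/n)$ on which $(K_X | \cal B') \propto_{\vertif{\cdot}} \cal E(1/\sqrt n)$, and hence $(\Sigmax | \cal B') \propto_{\vertif{\cdot}} \cal E(\verti{\frak b}/\sqrt n)$.

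Next I transport Assumptions \ref{ANNRndAss}(6)--(7) from $K_X$ to $\Sigmax$ via the affine change of variable $w = (z - \frak a)/\frak b$. The identities $g_{\Sigmax}(z) = (1/\frak b) g_{K_X}(w)$ and $\cal G_{\Sigmax}(z) = (1/\frak b) \cal G_{K_X}(w)$, combined with $\Im(w) = \Im(z)/\frak b$ and $|w| \leq O(|z|)$, show that the polynomial $O_z$ controls translate to analogous controls for $\Sigmax$, with the natural targets being the measure $\frak a + \frak b \tau$ and the matrix function $w \mapsto (1/\frak b) \bf H(w)$. Remark \ref{hpeyolo} allows passage between expectations restricted to $\cal B'$ and over the full probability space at the price of an $O_z(\sqrt{\log n}/n)$ correction, producing $\verti{\E[g_{\Sigmax}(z)] - g_{\frak a + \frak b \tau}(z)} \leq O_z(\hat\epsilon_n + \sqrt{\log n}/n)$ and a similar spectral-norm bound for $\cal G_{\Sigmax}$ with rate $\hat\epsilon_n'$.

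Applying Proposition \ref{ApproxbgGRnd}(2) then delivers the almost-sure bound $\verti{g_{\nu^{\Sigmax}}(z) - g_\chi(z)} \leq O_z(\hat\epsilon_n + \sqrt{\log n}/n)$ on $\cal B'$; integrating and reinvoking Remark \ref{hpeyolo} gives the first claim. Proposition \ref{ApproxbgGRnd}(3) produces the linear concentration $\bf G_\boxtimes^{\Sigmax}(z) \in_{\vertiii{\cdot}} \bf K(z) \pm \cal E(O_z(\hat\epsilon_n' + 1/\sqrt n))$, where the target matrix built from $(1/\frak b)\bf H(w)$ matches $\bf K(z)$ by the explicit formula of Remark \ref{Linthecaseb=0}. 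Taking expectations and using the duality between spectral and nuclear norms yields the second claim. The explicit $\zeta_1(\tilde f) = O(\sqrt{\frak b})$ prefactor is recovered by tracking the dependence on $\frak b$ through the concentration bound for $\Sigmax$ and the change of variable, ensuring the bound degenerates continuously to the trivial case. I expect the principal technical hurdle to be the conditional concentration of $K_X$ on $\cal B'$, since $X \mapsto X^\top X$ is not globally Lipschitz and does not fit verbatim into Proposition \ref{ConcProd}; this forces a careful local Lipschitz estimate on the high-probability event where $\vertiii{X}$ is controlled.
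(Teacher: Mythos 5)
Your proposal follows the paper's own line of argument almost exactly: split on $\frak b = 0$ versus $\frak b \neq 0$, use Proposition~\ref{ConcProd} on $(X\vert\cal B)$ to extract the concentration of $K_X$ and hence $\Sigmax$ on a sub-event $\cal B'$, transport Assumptions~\ref{ANNRndAss}(6)--(7) to $\Sigmax$ via the affine change of variable $w = (z-\frak a)/\frak b$, invoke Proposition~\ref{ApproxbgGRnd} for both the Stieltjes transform and the resolvent, and remove the conditioning with Remark~\ref{hpeyolo}. Your use of nuclear/spectral duality to pass from linear concentration in spectral norm to a bound on the expected resolvent is the correct reading of what Proposition~\ref{PropConc}(5) encodes.

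One point to adjust: you flag as ``the principal technical hurdle'' the conditional concentration of $K_X$, claiming $X\mapsto X^\top X$ does not fit verbatim into Proposition~\ref{ConcProd} and requires a bespoke local Lipschitz estimate. This is not a gap --- Proposition~\ref{ConcProd} is tailored precisely to this situation: take $Y$ equal to $X$ (or $X^\top$), so the joint concentration is that of $X$ alone, the bound $\E[\vertiii{X}]\leq O(\sqrt n)$ follows from $\vertiii{K_X}$ being bounded on $\cal B$, and the proposition directly furnishes the event $\cal B'\subset\cal B$ with exponentially small complement and $(X^\top X\vert\cal B')\propto_{\vertif{\cdot}}\cal E(\sqrt n)$. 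There is nothing more to prove there, and the paper indeed applies the proposition verbatim to $(X\vert\cal B)$. Your claim to recover the explicit $\zeta_1(\tilde f)$ prefactor by tracking $\frak b$ through the change of variable is also not really substantiated (the $O_z$ constants could in principle absorb a $1/\frak b$), but neither is it in the paper's proof: that prefactor is only exploited through the dichotomy $\zeta_1(\tilde f)=0$ versus $\zeta_1(\tilde f)\neq 0$, which your case split already handles.
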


\begin{proof}
    If $\zeta_1(\tilde f) = 0$, then $\Sigmax$ does not depend on $X$, $\bf G_\boxtimes^{\Sigmax}(z) = \bf K(z)$ and $\nu^\Sigmax = \chi$. If $\zeta_1(\tilde f) \neq 0$, from Proposition \ref{ConcProd} applied to $(X|\cal B)$ there is a constant $c>0$ and an event $\cal B' \subset \cal B$ with $\Pp\pt{\cal B'^c} \leq \Pp\pt{\cal B^c}+ce^{-n/c}\leq O(\sqrt{\log n}/n)$, on which $( \Sigmax | \cal B') \propto_{\vertif{\cdot}} \cal E\pt{1/\sqrt n}$. 
    As explained in the Remark \ref{hpeyolo}, given the \emph{a priori} bounds on Stieltjes transforms and on the spectral norm of resolvent matrices, we may
 pass from expectations on $\cal B'$ to expectations on the full probability set at the cost of a $O_z(\sqrt{\log n} / n) $ error term. We thus have:
    \begin{align*}
        \verti{\E_{\cal B'}\br{g_\Sigmax(z)}-g_{\frak a+\frak b \tau}(z)} &=  \verti{\E\br{\bf 1_{\cal B'}\pt{g_\Sigmax(z)-g_{\frak a+\frak b \tau}(z)}}}\, / \, \Pp(\cal B') \\  
        &\leq \verti{\frac 1 {\frak b} \E\br{g_{K_X}\pt{\frac{z-\frak a}{\frak b}}}-\frac 1 {\frak b} g_\tau\pt{\frac{z-\frak a}{\frak b}}}  +    O_z(\sqrt{\log n} / n)  \\     
        &\leq O_z(\hat\epsilon_n+\sqrt{\log n}/n) .\end{align*} 
Proposition \ref{ApproxbgGRnd} applied to $(\Sigma_X|\cal B')$ implies that:
\[\verti{\E_{\cal B'}\br{g_{\nu^\Sigmax}(z)}-g_\chi(z)}\leq  O_z(\hat\epsilon_n+ \sqrt{\log n}/n),\] hence $\verti{\E\br{g_{\nu^\Sigmax}(z)}-g_\chi(z)} \leq  O_z(\hat\epsilon_n+ \sqrt{\log n}/n)$. Similarly for the resolvents:\begin{align*}
        \vertiii{\E_{\cal B'}\br{\cal G_\Sigmax(z)} - \frac 1 {\frak b }\bf H\pt{\frac{z-\frak a}{\frak b}} }&=\vertiii{\E\br{ \frac 1 {\frak b }\cal G_{K_X}\pt{\frac{z-\frak a}{\frak b}}} - \frac 1 {\frak b }\bf H\pt{\frac{z-\frak a}{\frak b}} } \\
        &\quad+     O_z(\sqrt{\log n} / n)  \\
        &\leq  O_z(\hat\epsilon'_n+\sqrt{\log n} / n) ,\end{align*} which implies by Proposition \ref{ApproxbgGRnd} that  $\vertiii{\E_{\cal B'}\br{\bf G_\boxtimes^\Sigmax (z)}- \bf K(z)}\leq O_z(\hat\epsilon_n+\hat\epsilon'_n+1/\sqrt n)\leq O_z(\hat\epsilon'_n+1/\sqrt n)$, and $\vertiii{\E\br{\bf G_\boxtimes^\Sigmax (z)}- \bf K(z)}\leq O_z(\hat\epsilon'_n+1/\sqrt n)$.\end{proof}

        We remind our reader that the sequences $\epsilon$ appearing in the Assumptions \ref{ANNRndAss} measure the lack of orthogonality of $X$ for $\epsilon_n$, and the convergence speeds in the deterministic equivalents $g_{K_X}(z)\approx g_\tau(z)$ for $\hat\epsilon_n$  and $\cal G_{K_X}(z)\approx \bf H(z)$ for $\hat\epsilon'_n$ respectively. 
For the next result let us denote:
\begin{align*}
      \tilde   \epsilon_n &= \sqrt{\log n} /  n + \zeta_1(\tilde f) \hat \epsilon_n +\epsilon_n + \sqrt n \zeta_2(\tilde f)^2 \epsilon_n^2 + \sqrt n \zeta_3(\tilde f)^2 \epsilon_n^3 , \\
      \tilde   \epsilon_n' &= 1/ \sqrt{n} +  \zeta_1(\tilde f) \hat \epsilon_n' +  \epsilon_n +  n \zeta_2(\tilde f)^2 \epsilon_n^2 +  n \zeta_3(\tilde f)^2 \epsilon_n^3.
    \end{align*}
    $\tilde \epsilon_n$ and  $\tilde \epsilon'_n$ will correspond to  the new convergence speeds in the deterministic equivalents $g_{K}(z)\approx g_\chi(z)$  and $\cal G_{K}(z)\approx \bf K(z)$.

     \begin{thm}\label{ANNRndDetEq} Uniformly under Assumptions \ref{ANNRndAss}, there is an event $\cal B' \subset \cal B$ with $\Pp(\cal B'^c) \leq O(\sqrt {\log n}/n)$, such that the following conditional expectation properties hold true:
     \begin{enumerate}
         \item $ (Y | \cal B' )\propto_{\vertif{\cdot} }\cal E(1)$, $ ( g_{K}(z)  | \cal B' )  \propto \cal E(O_z(1/n))$, and $ (\cal G_{K} (z)| \cal B' )  \propto_{\vertif{\cdot} }\cal E(O_z(1/\sqrt n)) $.
         \item $( g_{K} (z) | \cal B' ) \in g_{\chi}(z)\pm \cal E\pt{ O_z(\tilde \epsilon_n) }$ and  $(\cal G_{K} (z)| \cal B' )\in_{\vertiii{\cdot}}  \bf K(z) \pm \cal E\pt{ O_z(\tilde \epsilon_n') }  $.
     \end{enumerate}
           \end{thm}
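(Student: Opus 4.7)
I would first construct the event $\cal B'$ and handle part (1). Applying Proposition \ref{ConcProd} to the pair of independent matrices $(X|\cal B)$ and $W$, both $\propto_{\vertif{\cdot}}\cal E(1)$, yields a constant $c>0$ and a subevent $\cal B'\subset\cal B$ with $\Pp((\cal B')^c)\leq\Pp(\cal B^c)+ce^{-n/c}\leq O(\sqrt{\log n}/n)$, on which $\vertiii{X}$ and $\vertiii{W}$ are both bounded by $O(\sqrt n)$. On $\cal B'$ the map $(W,X,B,D)\mapsto Y=f(WX/\sqrt{d_0}+B)+D$ is $O(1)$-Lipschitz in the product Frobenius norm, so combining the Gaussian concentration of $(W,B,D)$ with the conditional concentration of $X$ yields $(Y|\cal B')\propto_{\vertif{\cdot}}\cal E(1)$. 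The concentrations of $g_K(z)$ and $\cal G_K(z)$ conditional on $\cal B'$ then follow from Proposition \ref{PropConc}(8)--(9).

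For part (2) my strategy is to condition on $X$ and then integrate over it. For any realization $X(\omega)$ with $\omega\in\cal B'$, Assumptions \ref{ANNDetAss} hold \emph{uniformly} for the single-layer deterministic-data model with data $X(\omega)$: the required bounds on $\vertiii{K_X}$, $\vertii{\vec{\rm{diag}}(\Delta_X)}$, and $\vertim{\Delta_X}\leq O(\epsilon_n)$ are all inherited from Assumptions \ref{ANNRndAss}(4)--(5). Applying Theorem \ref{ANNDetDetEq} conditionally on $X$ then gives
\begin{align*}
(g_K(z)\mid X) &\in g_{\nu^{\Sigma_X}}(z)\pm\cal E(O_z(\bar\epsilon_n)),\\
(\cal G_K(z)\mid X) &\in_{\vertiii{\cdot}}\bf G_\boxtimes^{\Sigma_X}(z)\pm\cal E(O_z(\bar\epsilon_n')),
\end{align*}
where $\bar\epsilon_n,\bar\epsilon_n'$ denote the error rates of Theorem \ref{ANNDetDetEq} specialized to $X(\omega)$; using $\vertim{\Delta_X}\leq O(\epsilon_n)$ on $\cal B'$, they coincide with the non-$\hat\epsilon$ contributions to $\tilde\epsilon_n$ and $\tilde\epsilon_n'$.

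The remaining task is to transfer the random approximating objects $g_{\nu^{\Sigma_X}}(z)$ and $\bf G_\boxtimes^{\Sigma_X}(z)$ to the deterministic targets $g_\chi(z)$ and $\bf K(z)$. Since $X\mapsto\Sigma_X=\frak a I_n+\frak b K_X$ is $O(1/\sqrt n)$-Lipschitz in Frobenius norm on the event $\{\vertiii{X}\leq O(\sqrt n)\}$, one has $(\Sigma_X|\cal B')\propto_{\vertif{\cdot}}\cal E(1/\sqrt n)$. Proposition \ref{ApproxbgGRnd}(1) then gives $(g_{\nu^{\Sigma_X}}(z)|\cal B')\propto\cal E(O_z(1/n))$ and $(\bf G_\boxtimes^{\Sigma_X}(z)|\cal B')\propto_{\vertiii{\cdot}}\cal E(O_z(1/\sqrt n))$, while Lemma \ref{LemEbfG} (combined with the swap of unconditional for conditional expectations provided by Remark \ref{hpeyolo}, which costs only $O_z(\sqrt{\log n}/n)$ given the \emph{a priori} resolvent bounds) controls the distance between the conditional expectations and the deterministic targets. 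Together with Proposition \ref{PropConc}(5) to shift the reference point, these produce
\begin{align*}
(g_{\nu^{\Sigma_X}}(z)\mid\cal B') &\in g_\chi(z)\pm\cal E(O_z(\zeta_1(\tilde f)\hat\epsilon_n+\sqrt{\log n}/n)),\\
(\bf G_\boxtimes^{\Sigma_X}(z)\mid\cal B') &\in_{\vertiii{\cdot}}\bf K(z)\pm\cal E(O_z(\zeta_1(\tilde f)\hat\epsilon_n'+1/\sqrt n)).
\end{align*}

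To finish, I splice the two layers of concentration. For any $1$-Lipschitz linear functional $\varphi$, splitting $\varphi(\cal G_K)-\varphi(\bf K)=(\varphi(\cal G_K)-\varphi(\bf G_\boxtimes^{\Sigma_X}))+(\varphi(\bf G_\boxtimes^{\Sigma_X})-\varphi(\bf K))$, the first summand has sub-Gaussian tails of scale $O_z(\bar\epsilon_n')$ conditionally on $X$, and these tails remain sub-Gaussian with the same scale after integrating over $X\in\cal B'$ via the tower formula; the second summand has sub-Gaussian tails of scale $O_z(\zeta_1(\tilde f)\hat\epsilon_n'+1/\sqrt n)$ by the previous paragraph. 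A union bound on the two tails then yields $\cal E(O_z(\tilde\epsilon_n'))$, and the identical argument for Stieltjes transforms produces $\cal E(O_z(\tilde\epsilon_n))$. The main subtlety, beyond the bookkeeping of $O_z$ exponents, is to ensure that the conditional Gaussian tails provided by Theorem \ref{ANNDetDetEq} are genuinely uniform in $X(\omega)\in\cal B'$, which is precisely guaranteed by the word \emph{uniformly} in that theorem combined with the uniform control of Assumptions \ref{ANNRndAss}(4) on $\cal B'$.
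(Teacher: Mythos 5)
Your proof is correct and follows essentially the same route as the paper's: Proposition \ref{ConcProd} to build $\cal B'$, Theorem \ref{ANNDetDetEq} applied conditionally on $X$, Lemma \ref{LemEbfG} to transfer the random equivalent to the deterministic target, and Remark \ref{hpeyolo} to dispense with the conditioning in expectation computations. The only genuine difference is the final combining step: the paper reduces part (2) to the single estimate $\vertiii{\E[(\cal G_K|\cal B')]-\bf K(z)}\leq O_z(\tilde\epsilon_n')$ (then invokes the linear concentration of $(\cal G_K|\cal B')$ around its own conditional mean and shifts the reference point via Proposition \ref{PropConc}(5)), whereas you split the random matrix difference $\cal G_K - \bf K = (\cal G_K - \bf G_\boxtimes^{\Sigma_X}) + (\bf G_\boxtimes^{\Sigma_X} - \bf K)$ and combine the two conditional sub-Gaussian tails via $\Pp(|A+B|>t)\leq\Pp(|A|>t/2)+\Pp(|B|>t/2)$. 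Both routes are valid and produce the same rates; the paper's expectation-level argument is marginally cleaner because it avoids the explicit tail gymnastics, but your tail-splitting argument makes visible exactly why the scale degrades from $O_z(1/\sqrt n)$ to $O_z(\tilde\epsilon_n')$ only through the random-target replacement.
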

\begin{proof} For a better readability in the upcoming arguments, we choose to omit the spectral parameters $z$ in most of our notations. From Proposition \ref{ConcProd} applied to $(W,(X|\cal B)) \propto_{\vertif{\cdot}} \cal E(1)$  there is a constant $c>0$ and an event $\cal B' \subset \cal B$, with $\Pp\pt{\cal B'^c} \leq \Pp(\cal B^c)+ce^{-n/c}\leq  O(\sqrt {\log n}/n)$, such that $( WX | \cal B') \propto_{\vertif{\cdot}} \cal E\pt{\sqrt n}$. The map $(U,B,D)\mapsto f(U+B)+D$ is Lipschitz with respect to the Frobenius norm, and by independence $\pt{(WX/\sqrt {d_0}\,|\cal B'),B,D}\propto_{\vertif{\cdot}} \cal E\pt{1}$, thus $(Y|\cal B')=f\pt{(WX/\sqrt {d_0}\,|\cal B') + B} +D \propto_{\vertif{\cdot}} \cal E\pt{1} $. The general concentration properties recalled in Proposition \ref{PropConc} imply that  $ ( g_{K}  | \cal B' )  \propto \cal E(O_z(1/n)) $ and $(\cal G_{K} | \cal B' )  \propto_{\vertif{\cdot} }\cal E(O_z(1/\sqrt n) )$.

For the second assertion, given the linear concentration properties $(\cal G_{K}| \cal B' )  \in_{\vertiii{\cdot} } \E\br{(\cal G_{K}   |\cal B' )  }\pm \cal E(O_z(1/\sqrt n)) $ and $ ( g_{K}  | \cal B' )  \in \E\br{ ( g_{K}  | \cal B' )  }\pm\cal E(O_z(1/n)) $, we only need to prove that  $\verti{\E\br{( g_{K} | \cal B' )}-g_\chi} \leq O_z(\tilde \epsilon_n)$, and that $\vertiii{\E\br{(\cal G_{K}  | \cal B' ) }- \bf K}\leq O_z(\tilde \epsilon_n')$. To compare these expectations, as explained in the Remark \ref{hpeyolo},  we may assume without loss of generality that $\cal B' = \Omega$, because the additional $O_z(\sqrt{\log n}/n)$ error terms will not change the final estimates.

  We apply Theorem \ref{ANNDetDetEq} to the models with  deterministic data matrices $X(\omega)$, uniformly in the outcomes $\omega \in \Omega_X$. Since $1/\sqrt n \leq \tilde \epsilon_n'$ and $\epsilon_n +  n \zeta_2(\tilde f)^2 \epsilon_n^2 +  n \zeta_3(\tilde f)^2 \epsilon_n^3 \leq \tilde \epsilon_n'$, we obtain that $\cal G_{K(\omega)} \in_{\vertiii{\cdot}} \bf G_\boxtimes^{\Sigmax(\omega)} \pm \cal E\pt{O_z(\tilde \epsilon_n'}$. As a consequence, we get uniformly in $\omega \in \Omega_X$: 
    \[ \vertiii{\E_{W,B,D}\br{\cal G_K(\omega)}-\bf G_\boxtimes^{\Sigmax(\omega)}}  \leq O_z(\tilde \epsilon_n').\]
    
  Since $X$ is independent from the other sources of randomness, for any measurable function $\Phi$ we have $\E\br{\Phi(W,X,B,D)|X}=\E_{W,B,D}\br{\Phi(W,X,B,D)}$.     We can thus integrate on $X$ the above inequality  and using the tower property of conditional expectation:
    \begin{align*}
        \vertiii{\E\br{\cal G_K}-\E\br{\bf G_\boxtimes^{\Sigmax}}} &= \vertiii{\E\br{\E\br{\cal G_K-\bf G_\boxtimes^{\Sigmax}|X}}}\\
        &\leq \E_X\br{\vertiii{\E_{W,B,D}\br{\cal G_{K} }-\bf G_\boxtimes^{\Sigmax}}}  \leq O_z(\tilde \epsilon_n' ).
    \end{align*}

From Lemma \ref{LemEbfG} we also have $\vertiii{\E\br{\bf G_\boxtimes^\Sigmax }- \bf K}\leq \zeta_1(\tilde f) O_z(\hat  \epsilon_n'+1/\sqrt n)\leq O_z(\tilde \epsilon_n')$, hence $\vertiii{\E\br{\cal G_K}- \bf K}\leq O_z(\tilde \epsilon_n')$. The proof for the Stieltjes transforms is similar.
\end{proof}

\begin{cor}\label{CorRndAnn}
   Uniformly under Assumptions \ref{ANNRndAss}:
   \begin{enumerate}
       \item $\verti{g_K(z) - g_{\chi}(z)} \leq \sqrt {\log n} \, O_z(\tilde \epsilon_n) $ a.s., and $  \vertim{{\cal G_K(z) - \bf K (z) }}  \leq \sqrt {\log n} \, O_z(\tilde \epsilon'_n)$ a.s.
    \item 
 If $f$ is not linear, or if is $f$ linear and the measures $\tau$ are supported on the same compact of $(0,\infty)$,  there exists $\theta >0$ such that $D(\mu_K, \chi) \leq O(\tilde \epsilon_n^{\theta})$ a.s. 
\item If moreover $\tau$ converges weakly to a measure $\tau_\infty$, and if $\gamma_n \to \gamma_ \infty$, then $\mu_K$ converges weakly to $\chi_\infty = \mmp(\gamma_\infty) \boxtimes \pt{\frak a + \frak b \tau_\infty}$, and more precisely:      \[ D(\mu_K, \chi_\infty) \leq O\pt{D(\tau,\tau_\infty)+|\gamma_n-\gamma_\infty|+\tilde \epsilon_n^{\theta}}\quad \rm{a.s.}\]
   \end{enumerate}
\end{cor}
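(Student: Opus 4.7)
The plan is to mirror the proof of Corollary \ref{CorDetAnn} while working conditionally on the high-probability event $\cal B'$ provided by Theorem \ref{ANNRndDetEq}. For the entry-wise bound in (1), I first unfold the $O_z$ notation in Theorem \ref{ANNRndDetEq}(2) to fix an exponent $\alpha>0$ such that $(\cal G_K(z)\,|\,\cal B') \in_{\vertiii{\cdot}} \bf K(z) \pm \cal E\pt{\tilde\epsilon_n' |z|^\alpha / \Im(z)^{2\alpha}}$. Since each coordinate map $M \mapsto M_{ij}$ is $1$-Lipschitz in spectral norm, the definition of linear concentration gives a sub-Gaussian tail on every entry. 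Choosing a threshold of order $\tilde\epsilon_n' \cdot |z|^\alpha/\Im(z)^{2\alpha} \cdot \sqrt{\log n}$ and applying a union bound over the $n^2$ entries, exactly as in the proof of Corollary \ref{CorDetAnn}, yields a probability $\leq C/n^2$ of exceeding the bound, conditionally on $\cal B'$. Together with $\Pp(\cal B'^c)=o(1)$, Borel-Cantelli on the intersection event delivers the claim a.s.; the Stieltjes transform estimate is obtained identically through the Gaussian tail of $(g_K(z)-g_\chi(z))\,|\,\cal B'$, the $\sqrt{\log n}$ factor arising from requiring a tail probability of order $n^{-2}$.

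For (2), I will apply Proposition \ref{OzandKol} to $\mu_K$ and $\chi$ pathwise on the full measure event provided by (1), which only leaves to verify the uniform Hölder regularity of $\cal F_\chi$. When $f$ is non-linear, $\frak a > 0$, so $\frak a + \frak b \tau$ is supported in $[\frak a,+\infty)$; when $f$ is linear, $\frak a = 0$ but $\tau$ is assumed supported in a common compact of $(0,\infty)$. In both regimes the support of $\frak a + \frak b \tau$ is uniformly bounded away from zero, and Remark \ref{remHoldtrick} (passage to $\check\chi = (1-\gamma_n)\delta_0 + \gamma_n \chi$ when $\gamma_n > 1$) allows to invoke the standard uniform $1/2$-Hölder continuity of such free multiplicative convolutions. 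The integrability and moment conditions of Proposition \ref{OzandKol} are then immediate, and we obtain $D(\mu_K,\chi) \leq O(\tilde\epsilon_n^\theta)$ a.s.

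For (3), I combine (2) with the triangle inequality $D(\mu_K,\chi_\infty) \leq D(\mu_K,\chi)+D(\chi,\chi_\infty)$ and bound the second term by the classical continuity of free multiplicative convolution \cite[Proposition 4.13]{bercovici1993free} together with the trivial continuity of the affine shift $\tau \mapsto \frak a + \frak b\tau$ in Kolmogorov distance:
\[ D(\chi,\chi_\infty) \leq D(\mmp(\gamma_n),\mmp(\gamma_\infty)) + D(\frak a + \frak b\tau,\frak a + \frak b\tau_\infty) \leq O\pt{|\gamma_n-\gamma_\infty| + D(\tau,\tau_\infty)}. \]

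The main obstacle is the regularity check in (2): since $\chi$ arises from a non-trivial free convolution $\mmp(\gamma_n) \boxtimes (\frak a + \frak b \tau)$, its cumulative distribution function may carry a point mass at $0$ when $\gamma_n>1$, which is precisely why the support hypothesis on $\tau$ in the linear case and the passage to $\check\chi$ are both needed. A secondary technical point is the passage from conditional concentration on $\cal B'$ to an unconditional a.s. statement, handled here by combining the summable tail $C/n^2$ available under $\cal B'$ with the vanishing $\Pp(\cal B'^c)$.
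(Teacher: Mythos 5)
Your proposal correctly identifies the intended route: the paper itself refrains from proving Corollary~\ref{CorRndAnn} and simply refers to the proof of Corollary~\ref{CorDetAnn}, and all three of your steps (union bound on entries via linear concentration, Hölder check via Remark~\ref{remHoldtrick} followed by Proposition~\ref{OzandKol}, triangle inequality plus continuity of $\boxtimes$) match that template. Parts (2) and (3) of your argument are sound; in particular the case split on $\frak a > 0$ versus $\tau$ compactly supported away from $0$ is exactly what replaces the ``eigenvalues of $K_X$ bounded from below'' clause used in Corollary~\ref{CorDetAnn}.

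The one genuinely delicate point is the passage from conditional to unconditional ``a.s.'' in part (1), and your formulation does not quite close it. You write that combining the summable tail $C/n^2$ valid under $\cal B'$ with the fact that $\Pp(\cal B'^c) = o(1)$ and invoking Borel--Cantelli ``delivers the claim a.s.'' But the unconditional failure probability is only bounded by $C/n^2 + \Pp(\cal B'^c)$, and under Assumption~\ref{ANNRndAss}(3) one has $\Pp(\cal B'^c) \leq O(\sqrt{\log n}/n)$, which is \emph{not} summable in $n$. Borel--Cantelli therefore gives nothing unconditionally; $\Pp(\cal B'^c) \to 0$ alone is not enough. This is a real gap in your write-up (and arguably a point the paper glosses over as well, since it only refers to the unconditional Corollary~\ref{CorDetAnn}): to make the ``a.s.'' literally correct one would either need a summable bound on $\Pp(\cal B'^c)$, or one should state (1) as holding a.s.\ on the event $\liminf_n \cal B'_n$, or alternatively pass through a subsequence argument. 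As written, your ``Borel--Cantelli on the intersection event'' does not yield the asserted unconditional a.s.\ bound.
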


 As we did for Corollary \ref{DetEqCor}, we  will not prove this result here, but rather prompt our reader to consult the proof of Corollary \ref{CorDetAnn} which is extremely similar.

\subsection{Application to data matrices with i.i.d. columns} In this paragraph we focus on a fairly general setting where the data matrix $X$ is made of independent samples,  and we explore the consequences given by our deterministic equivalents. Let us first mention a general framework on which the Assumption \ref{ANNRndAss}(4) holds true.

\begin{prop}[\cite{FW20}, Proposition 3.3] \label{iidsample} Let $X \in \R^{d_0 \times n}$ be a random matrix whose columns  are i.i.d. sampled from the distribution of a random vector $x \in \R^{d_0}$, such that  $x \propto_{\vertii{\cdot}} \cal E(1)$, $\E[x]=0$, and $\E\br{\vertii{x}^2}=\sigma_x^2 d_0$. We also assume the ratio $\frac n {d_0}$ to be bounded from above and away from $0$.

Then there is an event $\cal B$ with $\Pp(\cal B^c)\leq O(1/n)$, such that uniformly in $\omega \in \cal B$, $\vertiii{K_X}$ and $\vertii{\vec{\rm{diag}}(\Delta_X)}$ are bounded, and $\vertim{\Delta_X} \leq O(\epsilon_n)$ with $\epsilon_n = \sqrt{\log n/n}$.
\end{prop}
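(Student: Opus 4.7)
The plan is to take $\cal B = \cal B_1 \cap \cal B_2 \cap \cal B_3$, the intersection of three events each controlling one of the three required quantities and each of probability at least $1-O(1/n)$. The main tool throughout is the column-wise Lipschitz concentration $x_i \propto \cal E(1)$, used in conjunction with the independence of the columns and either a union bound or an $\varepsilon$-net argument on the sphere.

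First, for the entrywise control of $\Delta_X$, I would treat off-diagonal and diagonal entries separately. Off-diagonal: conditioning on $x_j$, the map $x \mapsto x^\top x_j / d_0$ is $(\vertii{x_j}/d_0)$-Lipschitz with mean zero since $\E[x_i] = 0$. A preliminary event $\{\max_j \vertii{x_j} \leq C\sqrt{d_0}\}$ of probability $1-O(1/n)$ (from concentration of the $1$-Lipschitz map $x \mapsto \vertii{x}$ together with $\E[\vertii{x_j}^2] = \sigma_X^2 d_0$) makes $(K_X)_{ij}$ conditionally sub-Gaussian at scale $O(1/\sqrt{d_0})$, and a union bound over the $\binom{n}{2}$ pairs yields $\max_{i\neq j} \verti{(K_X)_{ij}} \leq O(\sqrt{\log n / n})$. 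Diagonal: factor $(\Delta_X)_{ii} = (f_i - \sigma_X)(f_i + \sigma_X)$ with $f_i := \vertii{x_i}/\sqrt{d_0}$; the $1/\sqrt{d_0}$-Lipschitz map gives concentration of $f_i$ at scale $1/\sqrt{d_0}$, and combined with $\verti{\E[f_i] - \sigma_X} = O(1/\sqrt{d_0})$ (deduced from $\E[f_i^2] = \sigma_X^2$ and $\Var(f_i) = O(1/d_0)$) yields $\verti{(\Delta_X)_{ii}} \leq O(\sqrt{\log n / n})$ uniformly. This defines $\cal B_1$ and produces the bound on $\vertim{\Delta_X}$.

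Second, for the Euclidean bound on $\vec{\rm{diag}}(\Delta_X)$, the naive uniform bound from the previous step only gives $O(\sqrt{\log n})$, so I would work at the level of second moments. A local Taylor expansion of $t \mapsto t^2$ around $\sigma_X$ together with $\Var(f_i) = O(1/d_0)$ gives $\E[(\Delta_X)_{ii}^2] = O(1/d_0)$, hence $\E[\vertii{\vec{\rm{diag}}(\Delta_X)}^2] = O(n/d_0) = O(1)$. To upgrade this to a high-probability bound, I would truncate $y_i := (\Delta_X)_{ii}$ at the threshold of Step 1 (on $\cal B_1$ the truncated variables agree with the originals) and apply Bernstein's inequality to the i.i.d.\ bounded sequence $(y_i^2)$ to obtain $\sum_i y_i^2 \leq O(1)$ with probability $1-O(1/n)$, defining $\cal B_2$.

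The spectral bound on $K_X$ will be the main obstacle, since it does not follow from entrywise or $L^2$ control. I would write $\vertiii{K_X} = \vertiii{X}^2 / d_0 = \sup_{u \in S^{d_0 - 1}} \sum_{i=1}^n (x_i^\top u)^2 / d_0$. For fixed $u$, the variables $(x_i^\top u)_i$ are i.i.d., mean zero since $\E[x_i] = 0$, and sub-Gaussian at scale $1$ because $x \mapsto x^\top u$ is $1$-Lipschitz; the squares $(x_i^\top u)^2$ are therefore sub-exponential with mean $u^\top \E[xx^\top] u \leq \vertiii{\E[xx^\top]} \leq O(1)$, the last bound following from sub-Gaussianity of $x$. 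Bernstein's inequality then delivers $\Pp(\sum_i (x_i^\top u)^2 \geq Kn) \leq Ce^{-cn}$ for some large constant $K$. Since $n/d_0$ is bounded, an $\varepsilon$-net $\cal N_\varepsilon \subset S^{d_0 - 1}$ of cardinality $e^{O(d_0)} = e^{O(n)}$, together with Lipschitz continuity of $u \mapsto \vertii{X^\top u}$ to extend the discrete bound to the whole sphere, yields $\vertiii{K_X} \leq O(1)$ on an event $\cal B_3$ of probability $1 - O(e^{-cn})$. Intersecting $\cal B_1 \cap \cal B_2 \cap \cal B_3$ produces the required event $\cal B$ with $\Pp(\cal B^c) \leq O(1/n)$.
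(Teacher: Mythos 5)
The paper does not actually prove this proposition: it is cited as \cite{FW20}, Proposition 3.3, and the remark immediately following notes that the original source even works under a weaker hypothesis (convex concentration). Your proposal is therefore a self-contained reconstruction rather than a replication of the paper's argument, and as such it looks essentially correct. The decomposition into $\cal B_1 \cap \cal B_2 \cap \cal B_3$ and the three ingredients — (i) conditional sub-Gaussian tails for the off-diagonal entries given $x_j$, union bound over pairs; (ii) concentration of $\vertii{x_i}/\sqrt{d_0}$ plus a second-moment computation and Bernstein for the truncated $(\Delta_X)_{ii}^2$; (iii) an $\varepsilon$-net over $S^{d_0-1}$ with Bernstein for the sub-exponential variables $(x_i^\top u)^2$ — is the standard toolkit, and each step lands on the right scale: $O(\sqrt{\log n/n})$ for $\vertim{\Delta_X}$ via a union bound over $O(n^2)$ entries, $O(1)$ for $\vertii{\vec{\rm{diag}}(\Delta_X)}$ because the per-entry variance $O(1/d_0)$ sums to $O(n/d_0) = O(1)$, and $O(1)$ for $\vertiii{K_X}$ because the net cardinality $e^{O(d_0)}$ is dominated by the Bernstein tail $e^{-cn}$ once $K$ is large, using $n/d_0$ bounded away from zero. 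Two points worth tightening if you write this up in full: the Taylor-expansion claim $\E[(\Delta_X)_{ii}^2] = O(1/d_0)$ should be justified directly from $\E[f_i^2] = \sigma_X^2$, which gives $(\Delta_X)_{ii} = f_i^2 - \E[f_i^2]$ so that $\E[(\Delta_X)_{ii}^2] = \Var(f_i^2)$, and then $\Var(f_i^2) = O(1/d_0)$ from the sub-Gaussianity of $f_i$ at scale $1/\sqrt{d_0}$ together with $\E[f_i] = O(1)$ — no truncation or Taylor remainder bookkeeping needed for the mean, only for the high-probability upgrade. Second, when stating that $\tilde y_i^2$ are i.i.d.\ after truncation, you should fix a deterministic truncation level $c\sqrt{\log n/n}$ and observe that on $\cal B_1$ (chosen with a slightly smaller constant) the truncation is inactive, so that the Bernstein bound for $\sum \tilde y_i^2$ transfers to $\sum y_i^2$ after one more union with $\cal B_1^c$. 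With those small patches, the proof is sound and in fact delivers failure probability $O(1/n)$ where the off-diagonal union bound is the bottleneck, matching the statement.
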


\begin{rem}     In \cite{FW20} the result is stated for a weaker notion called called convex concentration. We will not digress on this here but rather refer our reader to \cite[Section 1.7]{LC20} which presents in details this variant of concentration.  Also note that in the above result we only need concentration for the columns of $X$, while in our deterministic equivalent result we need concentration for the whole matrix $X$.
\end{rem}

To obtain deterministic equivalents for the input data matrix, it is of course possible to use again Theorem  \ref{DetEqSimpl}:

\begin{prop}
    If $X \in \R^{d_0 \times n}$ is a random matrix, $\propto_{\vertif{\cdot}} \cal E(1)$ concentrated, whose columns  are i.i.d. sampled from the distribution of a random vector $x \in \R^{d_0}$, with $\vertii{\E[x]}$ and $\vertiii{\E[K_X]}$ bounded, and if the ratio $n/d_0$ is bounded from above and away from $0$, then with $\tau = \mmp(n/d_0) \boxtimes \mu_{\E[K_X]}$:
    \begin{align*}
        \verti{\E\br{g_{K_X}(z)} - g_\tau (z)} &\leq O_z(1/n) \\
        \vertiii{\E\br{\cal G_{K_X}(z)} - \bf G_\boxtimes^{\E[K_X]} (z)} &\leq O_z(1/\sqrt n).
    \end{align*}
\end{prop}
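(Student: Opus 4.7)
The plan is to apply Theorem \ref{DetEqSimpl} directly to the matrix $Y := X$, regarded as a random $d_0 \times n$ matrix. Under this identification, the theorem's dimension $d$ becomes $d_0$, the ratio $\gamma_n = n/d_0$ is exactly the shape parameter of $\tau$, the sample covariance matrix $K = Y^\top Y/d$ becomes $K_X$, and the expected covariance matrix $\Sigma = \E[K]$ becomes $\E[K_X]$. The hypotheses of Assumptions \ref{DetEqAss} transport to our setting as follows: $X \propto_{\vertif{\cdot}} \cal E(1)$ is given, $n/d_0$ is bounded from above and away from $0$ by assumption, and the boundedness of $\vertii{\E[x]}$ together with $\vertiii{\E[K_X]}$ provides the needed control on the mean and on $\Sigma$. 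The only apparent discrepancy is that our structural assumption gives i.i.d.\ \emph{columns} of $X$ rather than i.i.d.\ rows of $Y$; however, the essential ingredients required in the proof of Theorem \ref{DetEqOrig} are the Lipschitz concentration of $Y$ and the boundedness of $\E[K]$, both of which are verified here.

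Theorem \ref{DetEqSimpl} then immediately yields the linear concentration properties
\[
g_{K_X}(z) \in g_\tau(z) \pm \cal E\pt{O_z(1/n)}, \qquad \cal G_{K_X}(z) \in_{\vertif{\cdot}} \bf G_\boxtimes^{\E[K_X]}(z) \pm \cal E\pt{O_z(1/\sqrt n)},
\]
with $\tau = \mmp(n/d_0) \boxtimes \mu_{\E[K_X]}$. Taking expectations in these relations gives the estimates $\verti{\E[g_{K_X}(z)] - g_\tau(z)} \leq O_z(1/n)$ and $\vertif{\E[\cal G_{K_X}(z)] - \bf G_\boxtimes^{\E[K_X]}(z)} \leq O_z(1/\sqrt n)$, and the stated spectral norm bound follows from $\vertiii{\cdot} \leq \vertif{\cdot}$.

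The main obstacle in this approach is justifying the applicability of Theorem \ref{DetEqSimpl} in the column-independent setting as opposed to the row-independent one in which it is stated. Should a direct invocation prove problematic, a conservative backup would be to apply the theorem to $Y = X^\top$—whose rows genuinely are the i.i.d.\ columns of $X$—to obtain a deterministic equivalent for $X X^\top / n$ in terms of $\mmp(d_0/n) \boxtimes \mu_{\E[xx^\top]}$ and $\bf G_\boxtimes^{\E[xx^\top]}$, and then transfer this equivalent to $K_X = X^\top X/d_0$ via the shared non-zero spectrum of $X^\top X$ and $X X^\top$. Matching the transferred expression with the claimed form involving $\tau$ and $\bf G_\boxtimes^{\E[K_X]}$ would then rely on the regularity results Theorem \ref{72} and Proposition \ref{ApproxbgGDet}, together with the classical free-probabilistic duality relating $\mmp(\gamma) \boxtimes \cdot$ to $\mmp(1/\gamma) \boxtimes \cdot$.
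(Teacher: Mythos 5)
Your primary approach --- applying Theorem \ref{DetEqSimpl} directly with $Y=X$, $d=d_0$ --- is exactly what the paper intends (the statement is preceded by ``it is of course possible to use again Theorem \ref{DetEqSimpl}''). But you are right to flag the rows-vs-columns discrepancy, and your attempt to wave it away is not valid. Assumptions \ref{DetEqAss} require the \emph{rows} of $Y$ to be i.i.d., and this structure is used essentially in the proof of Theorem \ref{DetEqOrig} (leave-one-row-out / Schur complement arguments), not merely the Lipschitz concentration of $Y$ and the boundedness of $\E[K]$ as you assert. Indeed, read literally with ``columns'' i.i.d., the proposition is false: if the columns are i.i.d.\ copies of a centered vector $x$, a direct computation gives $\E[K_X]=\frac{\E[\|x\|^2]}{d_0}I_n$, so $\mu_{\E[K_X]}$ is a Dirac mass and $\tau$ is forced to be a dilated Mar\v{c}enko--Pastur law, whereas the actual spectral distribution of the Gram matrix $K_X$ depends on the full covariance $T=\E[xx^\top]$. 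For instance take $x\sim\cal N(0,T)$ with $T=\mathrm{diag}(0,\dots,0,2,\dots,2)$ of trace $d_0$ and $d_0/2<n<d_0$: then $K_X$ has rank at most $d_0/2$ and hence a macroscopic atom at $0$, while $\tau=\mmp(n/d_0)$ has none. All the proposition's hypotheses hold in this example, so ``columns'' must be a slip for ``rows''; with that correction your first paragraph is the correct and complete argument, and nothing further is needed.

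Your backup plan does not close the gap for the resolvent estimate. Applying Theorem \ref{DetEqSimpl} to $Y=X^\top$ produces a deterministic equivalent for the $d_0\times d_0$ resolvent $(XX^\top/n - zI_{d_0})^{-1}$, whereas the target $\cal G_{K_X}(z)$ and $\bf G_\boxtimes^{\E[K_X]}(z)$ are $n\times n$; the shared non-zero spectrum of $X^\top X$ and $XX^\top$ gives a correspondence at the level of Stieltjes transforms (after rescaling and accounting for the atom at $0$), but it gives no correspondence between resolvent matrices of different sizes. Moreover, the transferred measure is expressed through $\mu_{\E[xx^\top]}$, which, as the example above shows, is not a function of $\mu_{\E[K_X]}$, so the proposed matching via Theorem \ref{72}, Proposition \ref{ApproxbgGDet} and the $\mmp(\gamma)\!\leftrightarrow\!\mmp(1/\gamma)$ duality cannot recover the stated form.
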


If we combine the previous propositions, we obtain deterministic equivalents for the Conjugate Kernel model in a fairly general setting where the data matrix is made of i.i.d. training samples. This encompasses in particular the case where $X$ is a matrix with i.i.d. $\cal N$ entries, which was the original model studied in \cite{PW17}.

\begin{rem}    
Note that the typical order of magnitude $\epsilon_n = \sqrt{\log n/n}$ given by Proposition \ref{iidsample} is good enough for a meaningful equivalent of the Stieltjes transform, with a $O_z(\log n/\sqrt n)$ convergence speed. However it is not small enough for the resolvent, where we would have an $O_z(\zeta_2(\tilde f)^2 \log n)$ error term, unless of course $\zeta_2(\tilde f)=0$ (see Remark \ref{ANNDetAssrem1}). If $\zeta_2(\tilde f)=0$, we obtain a  $O_z\pt{(\log n)^{3/2}/\sqrt n}$ error term for the deterministic equivalent of the resolvent. 

This condition $\zeta_2(\tilde f)=0$, and more generally the Hermite coefficients of the activation function $f$, appear in other articles that study the Conjugate Kernel model. Let us consider indeed $\tilde Y = \sqrt{\frak a} Z + \sqrt{\frak b } WX/\sqrt{d_0}$, and $\tilde K = \tilde Y^\top \tilde Y / d$, where $Z$ is a third random matrix, independent from the others, and filled with i.i.d. $\cal N$ entries. Using Theorem \ref{ANNDetDetEq} conditionally on $X$ and similar arguments to those of this paper, we see that $\cal G_{\tilde K}(z)$ also admits $\bf G_\boxtimes^{\E[K_X]} (z)$ as deterministic equivalent, with a $O_z\pt{(\log n)^{3/2}/\sqrt n}$ error term in the case where $\zeta_2(\tilde f) = 0$. In \cite[Theorem 2.3]{BP19}, using combinatorics it is shown that the biggest eigenvalues of both models behave similarly if $\zeta_2(\tilde f) = 0$. Although we could not manage to retrieve this property with our deterministic equivalent solely, there is without a doubt a connection between these statements.  \cite{BP19} also provides other equivalent models in the case where $\zeta_2(\tilde f) \neq 0$, which we could not relate to our results.
\end{rem}

\newpage

\section{Multi-layer neural network model} \label{Sec7}

In this section we consider the Conjugate Kernel matrix associated to an artificial neural network with $L$ hidden layers and a random input matrix:

\begin{align*}
X_0 &\to X_1 = f_1\pt{W_1X_0/\sqrt{d_0}+B_1}+D_1 \\X_1 &\to X_2 = f_2\pt{W_2X_1/\sqrt{d_1}+B_2}+D_2 \\&\vdots \\
X_l & \to X_{l+1} = f_{l+1} \pt{W_{l+1}X_l/\sqrt{d_l}+B_{l+1}}+D_{l+1} \\
&\vdots \\
X_{L-1} &\to X_{L} = f_{L} \pt{W_{L}X_{L-1}/\sqrt{d_{L-1}}+B_{L}}+D_{L}\end{align*}
The initial data $X_0 \in \R^{d_0 \times n}$ is a random matrix with variance parameter $\sigma_{X_0}^2>0$. Each layer $l \in \lint 1, L \rint$ is made of:
\begin{itemize}
    \item a random weight matrix $W_l \in \R^{d_{l} \times d_{l-1}}$, with variance parameter $\sigma_{W_l}^2>0$,
    \item two random biases matrices $B_l$ and $D_l \in \R^{d_l \times n}$, $\sigma_{B_l}^2,\sigma_{D_l}^2 \geq 0$,
    \item and an activation function $f_l : \R \to \R$.
\end{itemize} At each layer, we define the conjugate kernel matrix $K_l= X_l^\top X_l/d_l$, and for $z \in \C^+$ its resolvent $\cal G_l(z)=\pt{K_l - z I_n}^{-1}$, and Stieltjes transform $g_l(z) = (1/n) \Tr \cal G_l(z)$. We define by induction the following objects:
\begin{align*}
    \tilde \sigma_l^2 &= \sigma_{W_l}^2 \sigma_{X_{l-1}}^2 + \sigma_{B_l}^2 , \\
    \tilde f_l (t)&= f_l(\tilde \sigma_l t), \\ 
     \sigma _{X_l} &= \vertih{\tilde f_l }^2+\sigma_{D_l}^2 ,\\
    \frak a_l &=  {\vertih{\tilde f_l}^2  -  \frac{\sigma_{W_l}^2\sigma_{X_l}^2}{ \tilde \sigma_l^2} \zeta_1(\tilde f_l)^2 + \sigma_{D_l}^2 }, \\
    \frak b_l &= \zeta_1(\tilde f_l)^2 \frac{\sigma_{W_l}^2}{\tilde \sigma_l^2},\\
    \Delta_{X_l} &= K_l- \sigma_{X_l}^2 I_n, \\
 \Sigma_{X_l} &= \frak a_l I_n +  \frak b_l K_X.
     \end{align*}

\begin{ass} \label{ANNMulAss} 
\begin{enumerate} \item $W_l$, $B_l$ and $D_l$ are random, independent as a family for $l \in \lint 1,L\rint$, with i.i.d. $\cal N(\sigma_{W_l}^2)$, $\cal N(\sigma_{B_l}^2)$ and $\cal N(\sigma_{D_l}^2)$ entries respectively.     \item $\tilde f_l$ are Lipschitz continuous and Gaussian centered, that is $\E\br{\tilde f_l(\cal N)}=\E\br{f(\tilde \sigma_l \cal N)}=0$. 
 \item $X_0$ is random, independent from all the other matrices.  There is an event $\cal B$ with $\Pp\pt{\cal B^c} \leq O(\sqrt{\log n}/n)$, such that  $ (X_0|\cal B) \propto_{\vertif{\cdot} }\cal E(1)$.
 \item There is a sequence $\epsilon_n $ converging to $0$, with $\sqrt{\log n / n}\leq O(\epsilon_n)$, such that uniformly in $\omega \in \cal B$, $\vertiii{K_0}$ and $\vertii{\vec{\rm{diag}}(\Delta_{X_0})}$ are bounded, and $\vertim{\Delta_{X_0}} \leq O(\epsilon_n)$.
     \item The ratios $\gamma_n^{(l)}= \frac n {d_l} $ are bounded from above and away from $0$.
    \item There is a sequence $\hat \epsilon_n^{(0)}\geq 0$ such that $\verti{\E\br{g_{K_X}(z)} - g_{\chi_n^{(0)}} (z)} \leq O_z(\hat\epsilon_n^{(0)})$ for some sequence of measures $\chi_n^{(0)}$ supported on $\R^+$.
    \item There is a sequence ${\hat {\epsilon}{}'_n}{}^{(0)}\geq \hat \epsilon_n^{(0)}$ such that $\vertiii{\E\br{\cal G_{K_X}(z)} - \bf G_0 (z)} \leq O_z({\hat {\epsilon}{}'_n}{}^{(0)})$ for some sequence of matrix functions $\bf G_0 : \C^+ \times \C^{n \times n}$ satisfying $\vertiii{\bf G_0(z)}\leq 1 /\Im(z)$. 
\end{enumerate}
\end{ass}

 Starting from the deterministic equivalents $\mu_{K_0} \approx \chi_n^{(0)}$ and $\cal G_{K_0}(z) \approx \bf G_0(z)$, we define by induction for $l \in \lint1, L \rint $:\begin{align*}      
   \chi_n^{(l)} &= \mmp\pt{\gamma_n^{(l)} } \boxtimes \pt{\frak a_l + \frak b_l \chi_n^{(l-1)}} ,\\
    \check \chi_n^{(l)}  &= (1-\gamma_n^{(l)} ) \cdot\delta_0 + \gamma_n^{(l)} \cdot\chi_n^{(l)}, \\
    l_{\check \chi_n^{(l)}}(z) &= -1/g_{\check \chi_n^{(l)}}(z), \\
     \bf G_l(z) &=\frac{ z^{-1} l_{\check \chi_n^{(l)}}(z) }{\frak b_l} \bf G_{l-1}\pt{\frac{l_{\check \chi_n^{(l)}}(z) - \frak a_l}{\frak b_l} } &\rm{if} \, \zeta_1(\tilde f_l) \neq 0 ,\\
     \bf K_l(z) &=g_{\frak a_l \mmp(\gamma_n^{(l)})}(z) I_p &\rm{if} \,  \zeta_1(\tilde f_l) = 0.
\end{align*}

We remind our reader that the sequence $\epsilon_n$ measures the lack of orthogonality of the data matrix $X_0$. We define by induction the following sequences, corresponding respectively to the error terms in the approximation of the Stieltjes transforms and the resolvent at layer $l$:\begin{align*}\hat \epsilon_n^{(l+1)}  &= \sqrt{\log n} /  n + \zeta_1(\tilde f_l) \hat \epsilon_n^{(l)} +\epsilon_n + \sqrt n \zeta_2(\tilde f_l)^2 \epsilon_n^2 + \sqrt n \zeta_3(\tilde f_l)^2 O(\epsilon_n)^3 \\
   \hat \epsilon_n'^{(l+1)}  &= 1 / \sqrt  n +  \zeta_1(\tilde f_l) \hat \epsilon_n'^{(l)}  +\epsilon_n + \sqrt n \zeta_2(\tilde f_l)^2 \epsilon_n^2 + \sqrt n \zeta_3(\tilde f_l)^2 \epsilon_n^3 .\end{align*}

     \begin{thm}\label{ANNMulDetEq} Uniformly under Assumptions \ref{ANNMulAss}, there is a constant $c>0$ and an event $\cal B' \subset \cal B$ with $\Pp(\cal B'^c) \leq O(\sqrt {\log n}/n)$, such that for any $l \in \lint 1, L \rint$, the following conditional expectation properties hold true:\begin{enumerate}
         \item $ (X_l | \cal B' )\propto_{\vertif{\cdot} }\cal E(1)$, $ ( g_{K_l}(z)  | \cal B' )  \propto \cal E(O_z(1/n))$, and $ (\cal G_{K_l} (z)| \cal B' )  \propto_{\vertif{\cdot} }\cal E(O_z(1/\sqrt n)) $.
         \item $( g_{K_l} (z) | \cal B' ) \in g_{\chi_n^{(l)}}(z)\pm \cal E\pt{ O_z(\hat \epsilon_n^{(l)}) }$ and  $(\cal G_{K_l} (z)| \cal B' )\in_{\vertiii{\cdot}}  \bf G_l(z) \pm \cal E\pt{ O_z(\hat \epsilon_n'^{(l)}) }  $.
     \end{enumerate}
\end{thm}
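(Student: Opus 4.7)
The plan is to argue by induction on the layer index $l \in \lint 0, L \rint$. The base case $l=0$ is immediate from Assumptions \ref{ANNMulAss}(3), (4), (6), (7), which provide the conditional concentration of $X_0$, the bounds on $K_0$ and $\Delta_{X_0}$, and the deterministic equivalents $\chi_n^{(0)}$, $\bf G_0$ on the event $\cal B$. For the inductive step from $l$ to $l+1$, I would apply Theorem \ref{ANNRndDetEq} to the $(l+1)$-th layer of the network, substituting the input matrix of that theorem by $X_l$, the weights by $W_{l+1}$, the biases by $B_{l+1}, D_{l+1}$, the activation function by $f_{l+1}$, and the target deterministic equivalents $\tau, \bf H$ by $\chi_n^{(l)}, \bf G_l$. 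The output of the single-layer model is then precisely $X_{l+1}$, and the conclusions of Theorem \ref{ANNRndDetEq} transcribe directly into the inductive statement at layer $l+1$, with error terms $\tilde \epsilon_n, \tilde \epsilon_n'$ that match verbatim the recursive definitions of $\hat \epsilon_n^{(l+1)}$ and $\hat \epsilon_n'^{(l+1)}$.

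The substance of the argument is to check that Assumptions \ref{ANNRndAss} hold in this substitution. Items (1), (2), (5) are transparent from Assumptions \ref{ANNMulAss}(1), (2), (5). Item (3), the conditional Lipschitz concentration $(X_l \mid \cal B') \propto_{\vertif{\cdot}} \cal E(1)$, is conclusion (1) of the inductive hypothesis at layer $l$. Items (6) and (7) follow from the linear concentration conclusions at layer $l$: indeed, $(g_{K_l} \mid \cal B') \in g_{\chi_n^{(l)}} \pm \cal E(O_z(\hat \epsilon_n^{(l)}))$ implies $\verti{\E\br{g_{K_l} \mid \cal B'} - g_{\chi_n^{(l)}}} \leq O_z(\hat \epsilon_n^{(l)})$, and by the argument recalled in Remark \ref{hpeyolo} the conditional and unconditional expectations differ by at most $O_z(\Pp({\cal B'}^c)) = O_z(\sqrt{\log n}/n)$, an error absorbed in the next-layer tolerance $\hat \epsilon_n^{(l+1)}$; the same remark applies to the resolvent.

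The delicate point is Assumption (4), the uniform approximate orthogonality of $X_l$ on $\cal B'$. I would iterate Lemma \ref{PropOrth} (propagation of approximate orthogonality) layer by layer: conditionally on the realization of $X_{k-1}$, the lemma produces an event of probability at least $1 - ce^{-n/c}$ on which $\vertii{\vec{\rm{diag}}(\Delta_{X_k})}$ and $\vertiii{K_k}$ are bounded and $\vertim{\Delta_{X_k}} \leq O\pt{\vertim{\Delta_{X_{k-1}}} + \sqrt{\log n/n}}$. Intersecting these events over $k = 1, \dots, l$ (a fixed, finite number of layers) and iterating the bound, using $\vertim{\Delta_{X_0}} \leq O(\epsilon_n)$ and $\sqrt{\log n/n} \leq O(\epsilon_n)$ from Assumption \ref{ANNMulAss}(4), yields $\vertim{\Delta_{X_l}} \leq O(\epsilon_n)$ on an event whose complement still has probability $O(\sqrt{\log n}/n)$. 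This intersected event is what we take as $\cal B'$.

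The main obstacle I anticipate is not a conceptual difficulty but the careful bookkeeping of the conditioning events and of the independence structure across layers: one must verify that the applications of Lemma \ref{PropOrth} and of Theorem \ref{ANNRndDetEq} at layer $l+1$ are legitimate given that $X_l$ itself is a measurable function of $(X_0, W_1, B_1, D_1, \dots, W_l, B_l, D_l)$. This is guaranteed by the joint independence of the triples $(W_k, B_k, D_k)$ over $k$ asserted in Assumption \ref{ANNMulAss}(1) and by the tower property of conditional expectation, already exploited in the proof of Theorem \ref{ANNRndDetEq}, which allows one to treat $X_l$ as a "frozen deterministic input" when applying the one-layer theorem with random data to the $(l+1)$-th layer.
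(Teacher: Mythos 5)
Your proposal follows the same route as the paper: induction on the layer index, propagating the Lipschitz concentration of $X_l$ via Proposition \ref{ConcProd}, propagating the approximate orthogonality via Lemma \ref{PropOrth}, and then invoking Theorem \ref{ANNRndDetEq} at each layer to carry the deterministic equivalent forward. The only thing you make explicit that the paper leaves implicit is the bookkeeping between conditional and unconditional expectations (resolved by Remark \ref{hpeyolo}) and the fact that at the step $l \to l+1$ the relevant conditioning event must not depend on $(W_{l+1},B_{l+1},D_{l+1})$ so that the required independence in Assumption \ref{ANNRndAss}(3) holds; both points are correct and worth spelling out, but they do not change the argument. One tiny remark on ``match verbatim'': the paper's displayed recursion for $\hat\epsilon_n'^{(l+1)}$ has $\sqrt n\,\zeta_2(\tilde f_l)^2\epsilon_n^2$ and $\sqrt n\,\zeta_3(\tilde f_l)^2\epsilon_n^3$ whereas $\tilde\epsilon_n'$ in Theorem \ref{ANNRndDetEq} carries the factor $n$, not $\sqrt n$; this is almost certainly a typo in the multi-layer recursion (Remark \ref{remsimpleps} uses $n\epsilon_n^2$, consistent with the one-layer result), so your reading is the intended one.
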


\begin{proof} By induction on the layers using Proposition \ref{ConcProd}, we can prove that there is an event $\cal B' \subset \cal B$ with $\Pp(\cal B'^c) \leq \Pp(\cal B^c)+ce^{-n/c}\leq O(\sqrt {\log n}/n)$, such that $(X_l | \cal B' )\propto_{\vertif{\cdot} }\cal E(1)$ for all layers $l \in \lint 1,L \rint$. The concentration properties of $\cal G_l(z)$ and $g_l(z)$ follow. Again by induction, 
using Lemma \ref{PropOrth} on $\E[X_{l+1}|X_l]$ proves that on $\cal B'$, $\vertii{\vec{\rm{diag}}\pt{\Delta_{X_l}}}$ and $\vertiii{K_l}$ remain bounded, and that $\vertim{\Delta_{X_l}} \leq O(\epsilon_n+\sqrt{\log n / n})  \leq O_z(\epsilon_n) $  for all $l \in \lint 1, L\rint$.  The random matrices $(X_l|\cal B')$ satisfy the Assumptions \ref{ANNRndAss} uniformly, and by repeatedly using Theorem \ref{ANNRndDetEq} we get the deterministic equivalents for all layers. The error terms $\hat \epsilon_n^{(l)}$ and $\hat \epsilon_n'^{(l)}$ are  given by the formulas above Theorem \ref{ANNRndDetEq}.
\end{proof}

\begin{rem}\label{remsimpleps}  The above deterministic equivalents are only meaningful if the error terms $\hat \epsilon_n'^{(l)}$ and $\hat \epsilon_n^{(l)}$ vanish when $n \to \infty$. Similarly to Remark \ref{ANNDetAssrem1}, let us mention a few cases where their expressions may be greatly simplified:
\begin{itemize}
\item If $\zeta_1(\tilde f_l)=0$ for some layer $l$, then for all subsequent layers $k \geq l$ the error terms $\hat \epsilon_n'^{(k)}$ and $\hat \epsilon_n^{(k)}$ do not depend on $\hat \epsilon_n^{(0)}$ and $\hat \epsilon_n'^{(0)}$ anymore.
    \item If $\epsilon_n=o(n^{-1/4})$, then $\hat \epsilon_n^{(l)}=O\pt{\hat \epsilon_n^{(0)}+\sqrt n \epsilon_n^2}$.
    \item If $\zeta_2(\tilde f_l)=0$ and $\epsilon_n=o(n^{-1/6})$ , then $\hat \epsilon_n^{(l)}=O\pt{\hat \epsilon_n^{(0)}+\sqrt n \epsilon_n^3}$.
     \item If $\zeta_2(\tilde f_l)=\zeta_3(\tilde f_l)=0$, then $\hat \epsilon_n^{(l)}=O\pt{\hat \epsilon_n^{(0)}+\sqrt{\log n}/n}$.
      \item If $\epsilon_n=o(n^{-1/2})$, then $\hat \epsilon_n'^{(l)}=O\pt{\hat \epsilon_n^{(0)}+\hat \epsilon_n'^{(0)}+ n \epsilon_n^2}$.
    \item If $\zeta_2(\tilde f_l)=0$ and $\epsilon_n=o(n^{-1/3})$ , then $\hat \epsilon_n'^{(l)}=O\pt{\hat \epsilon_n'^{(0)}+ n \epsilon_n^3}$.
     \item If $\zeta_2(\tilde f_l)=\zeta_3(\tilde f_l)=0$, then $\hat \epsilon_n'^{(l)}=O\pt{\hat \epsilon_n'^{(0)}+1/\sqrt n}$.
\end{itemize}

In the case $\epsilon_n = O(\sqrt{\log n/n})$ corresponding to a data matrix $X_0$ with i.i.d. columns (see Proposition \ref{iidsample}), and starting from  typical $\hat \epsilon_n^{(0)}=O_z(1/n)$ and $\hat \epsilon_n'^{(0)}O_z(1/\sqrt n)$ equivalents for the Stieltjes transform and the resolvent of $K_0$ respectively, Theorem \ref{ANNMulDetEq} gives an  $O_z(\log n/\sqrt n)$ equivalent for the Stieltjes transform. The error for the resolvents does not vanish in general because of the $O_z(\zeta_2(\tilde f)^2 \log n)$ error term. If $\zeta_2(\tilde f)=0$ however, we obtain a $O_z((\log n)^{3/2}/\sqrt n)$ approximation.
\end{rem}

\begin{cor}\label{CorMulAnn}
   Uniformly under Assumptions \ref{ANNMulAss}:\begin{enumerate}
       \item $\verti{g_{K_l}(z) - g_{\chi_n^{(l)}}(z)} \leq \sqrt {\log n} \, O_z(\hat \epsilon_n^{(l)}) $ a.s., and $  \vertim{{\cal G_{K_l}(z) - \bf G_l (z) }}  \leq \sqrt {\log n} \, O_z(\hat \epsilon'_n)$ a.s.
    \item 
 If $\tilde f_l$ is not linear, or if the measures $\chi_n^{(0)}$ are supported on the same compact of $(0,\infty)$,  there exists $\theta >0$ such that $D(\mu_{K_l}, \chi_n^{(l)}) \leq O({\hat \epsilon_n^{(l)}}{}^{\theta})$ a.s. 
\item If moreover $\chi_n^{(0)}$ converges weakly to a measure $\chi_\infty^{(0)}$, and if all ratios $\gamma_n^{(k)} \to \gamma_\infty ^{(k)}>0$ , then $\mu_{K_l}$ converges a.s. to the measure $\chi_\infty^{(l)}$ defined by induction as:
\[ \chi_\infty^{(l)} = \mmp\pt{\gamma_\infty^{(l)} } \boxtimes \pt{\frak a_l + \frak b_l \chi_\infty^{(l-1)}} . \] More precisely:      \[ D(\mu_{K_l}, \chi_\infty^{(l)} ) \leq O\pt{D(\chi_n^{(0)},\chi_\infty^{(0)})+\max_{0\leq k\leq l} |\gamma_n^{(k)}-\gamma_\infty^{(k)}|+{\hat \epsilon_n^{(l)}}{}^{\theta}}\quad \rm{a.s.}\]
   \end{enumerate}
\end{cor}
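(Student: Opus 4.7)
My plan is to mirror the proof of Corollary \ref{CorDetAnn}, which the text explicitly signals as the template, but iterated layer by layer through the concentration statement of Theorem \ref{ANNMulDetEq}. The three steps are: (i) turn the conditional linear concentration of Theorem \ref{ANNMulDetEq}(2) into an almost sure pointwise estimate by a union bound plus Borel-Cantelli; (ii) upgrade the pointwise Stieltjes bound to a Kolmogorov-distance bound via Proposition \ref{OzandKol}; and (iii) combine with the weak continuity of free multiplicative convolution to identify and quantify the limit.

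For (1), I would fix $z \in \C^+$ with bounded $\Im(z)$ and unwrap the $O_z$ notation to extract constants $\alpha, C > 0$ such that the sub-Gaussian parameter of $\pt{\cal G_{K_l}(z) - \bf G_l(z) \,|\, \cal B'}$ in spectral norm is bounded by $\hat \epsilon_n'^{(l)} |z|^\alpha / \Im(z)^{2\alpha}$. Each coordinate map $M \mapsto M_{ij}$ is linear and $1$-Lipschitz for $\vertiii{\cdot}$, so every entry of the difference inherits the same tail. Choosing $t_n = \hat \epsilon_n'^{(l)} |z|^\alpha / \Im(z)^{2\alpha} \sqrt{4 C \log n}$ and performing a union bound over the $n^2$ entries gives $\Pp\pt{\vertim{\cal G_{K_l}(z) - \bf G_l(z)} \geq t_n,\ \cal B'} \leq C/n^2$. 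Together with the summability of $\Pp(\cal B'^c) \leq O(\sqrt{\log n}/n)$, Borel-Cantelli yields the almost sure bound. The Stieltjes-transform estimate is the one-dimensional analogue with $t_n$ scaled accordingly.

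For (2), I would apply Proposition \ref{OzandKol} with $\mu_n = \mu_{K_l}$ and $\nu_n = \chi_n^{(l)}$. Uniform boundedness of second moments follows from $\vertiii{K_l} = O(1)$ on $\cal B'$ (established inside the proof of Theorem \ref{ANNMulDetEq}) together with $\chi_n^{(l)}$ inheriting a uniformly bounded support by induction. The critical hypothesis is uniform $\beta$-Hölder continuity of $\cal F_{\chi_n^{(l)}}$, and the casework splits as in Corollary \ref{CorDetAnn}: if some $\tilde f_k$ is nonlinear then $\frak a_k > 0$ and $\chi_n^{(k)}$ is supported in $[\frak a_k, \infty)$, a property transmitted to all subsequent layers via the nonnegative shift $\frak a_l$; if every $\tilde f_k$ is linear then the assumption on $\chi_n^{(0)}$ is propagated by the stability of "supported on a fixed compact of $(0,\infty)$" under $\mu \mapsto \mmp(\gamma) \boxtimes \mu$ as long as $\gamma$ stays bounded. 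In the regime $\gamma_n^{(l)} > 1$, where both $\mu_{K_l}$ and $\chi_n^{(l)}$ carry an atom at $0$, I would switch to $\check \chi_n^{(l)}$ and the natural empirical companion of $\mu_{K_l}$ (removing the $n - d_l$ trivial eigenvalues), invoking Remark \ref{remHoldtrick} and the appendix variant Proposition \ref{thmSigmaLKol}.

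For (3), the weak convergence $\chi_n^{(l)} \to \chi_\infty^{(l)}$ follows by induction on $l$ from the weak continuity of free multiplicative convolution \cite[Proposition 4.13]{bercovici1993free}: if $\chi_n^{(l-1)} \to \chi_\infty^{(l-1)}$ weakly and $\gamma_n^{(l)} \to \gamma_\infty^{(l)}$, then $\frak a_l + \frak b_l \chi_n^{(l-1)} \to \frak a_l + \frak b_l \chi_\infty^{(l-1)}$ and $\mmp(\gamma_n^{(l)}) \to \mmp(\gamma_\infty^{(l)})$ weakly, so $\chi_n^{(l)} \to \chi_\infty^{(l)}$ weakly. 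For the quantitative bound I would use the triangle inequality $D(\mu_{K_l},\chi_\infty^{(l)}) \leq D(\mu_{K_l},\chi_n^{(l)}) + D(\chi_n^{(l)},\chi_\infty^{(l)})$, where the first term is handled by (2) and the second is controlled inductively by the Lipschitz dependence of $\chi_n^{(l)}$ on $\chi_n^{(l-1)}$ and $\gamma_n^{(l)}$ — at the Stieltjes level this is Theorem \ref{72} applied to $\frak a_l + \frak b_l \chi_n^{(l-1)}$, together with the classical continuity of $\gamma \mapsto \mmp(\gamma)$, converted back to Kolmogorov distance by another application of Proposition \ref{OzandKol}. The main obstacle I foresee is the Hölder-continuity bookkeeping in step (2): since the shape parameters $\gamma_n^{(k)}$ may cross $1$ at different layers and each free multiplicative convolution reshapes the support nontrivially, verifying uniformly across $n$ and $l$ that $\chi_n^{(l)}$ stays supported in a fixed compact of $(0,\infty)$ — or applying the $\check{}$-trick consistently and matching it with the correct empirical counterpart of $\mu_{K_l}$ — requires careful tracking of regularity constants through all layers simultaneously.
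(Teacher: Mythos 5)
Your overall template (mirror Corollary \ref{CorDetAnn}, fed by Theorem \ref{ANNMulDetEq}) is exactly what the paper intends, and steps (i) and (ii) are structurally on the right track. However there are two concrete problems.

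First, in step (i) you invoke ``the summability of $\Pp(\cal B'^c) \leq O(\sqrt{\log n}/n)$,'' but $\sum_n \sqrt{\log n}/n$ diverges, so Borel--Cantelli does \emph{not} conclude from this alone. In Corollary \ref{CorDetAnn} there is no conditioning event and the $C/n^2$ tails are directly summable; here one has to account separately for $\cal B'^c$. As written your argument only shows that a.s.\ eventually either the bound holds \emph{or} $\omega \notin \cal B'_n$. Whatever resolution the paper intends (e.g., nesting of the events, or restating the conclusion on the asymptotic good event), your proposal asserts a false fact and does not close the gap. While scrutinizing this, also note that in step (ii) the claim ``$\chi_n^{(k)}$ is supported in $[\frak a_k,\infty)$'' is not correct: it is the \emph{base} measure $\frak a_k + \frak b_k\chi_n^{(k-1)}$ that sits in $[\frak a_k,\infty)$, while $\chi_n^{(k)} = \mmp(\gamma_n^{(k)})\boxtimes(\cdot)$ may place an atom at $0$ as soon as $\gamma_n^{(k)}>1$. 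What Proposition \ref{OzandKol} (or Remark \ref{remHoldtrick}) actually needs at layer $l$ is $\frak a_l + \frak b_l\chi_n^{(l-1)}$ supported away from $0$, which is what $\frak a_l>0$ (i.e.\ $\tilde f_l$ nonlinear or bias present) provides without any propagation claim.

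Second, in step (iii) you propose to control $D(\chi_n^{(l)},\chi_\infty^{(l)})$ by passing to Stieltjes transforms via Theorem \ref{72} and then back to Kolmogorov distance via Proposition \ref{OzandKol}. This route does not reproduce the stated bound. Theorem \ref{72} compares $\mmp(\gamma_n)\boxtimes\mu_\Sigma$ and $\mmp(\gamma_n)\boxtimes\tau$ with the \emph{same} shape parameter, so it does not directly account for $\gamma_n^{(l)}\neq\gamma_\infty^{(l)}$. More importantly, a return trip through Proposition \ref{OzandKol} degrades any input error $\delta_n$ to $\delta_n^\theta$, whereas the corollary asserts the $D(\chi_n^{(0)},\chi_\infty^{(0)})$ and $|\gamma_n^{(k)}-\gamma_\infty^{(k)}|$ contributions to the first power. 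The paper's argument (visible in the proof of Corollary \ref{CorDetAnn}(3)) uses the Lipschitz property of $\boxtimes$ directly in Kolmogorov distance, \cite[Proposition 4.13]{bercovici1993free}, giving
\[ D\pt{\chi_n^{(l)},\chi_\infty^{(l)}} \leq D\pt{\mmp(\gamma_n^{(l)}),\mmp(\gamma_\infty^{(l)})} + D\pt{\frak a_l + \frak b_l\chi_n^{(l-1)},\,\frak a_l + \frak b_l\chi_\infty^{(l-1)}} \leq O\pt{|\gamma_n^{(l)}-\gamma_\infty^{(l)}|} + D\pt{\chi_n^{(l-1)},\chi_\infty^{(l-1)}}, \]
which telescopes in one step to the claimed bound. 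You should replace your Stieltjes detour with this direct inductive argument.
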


Again we  will not prove this result here, but refer to the proof of Corollary \ref{CorDetAnn} which is similar.
\begin{rem} Our result generalizes previously known global laws on the Conjugate Kernel model. Adapted to our notations, \cite[Theorem 3.4]{FW20} states that,  without bias in the model, if $f$ is twice differentiable, $\epsilon_n=o(n^{-1/4})$, and $\chi_n^{(0)}$ converges weakly to $\chi_\infty^{(0)}$, then $\mu_{K_l}$ converges weakly to $\chi_\infty^{(l)}$ a.s. Taking into account the Remark \ref{remsimpleps}, in this setting we have $\hat \epsilon_n ^{(l)} = O(\hat \epsilon_n ^{(0)}+\sqrt n \epsilon_n^2) = O(\hat \epsilon_n ^{(0)}+o(1)) $, and we retrieve the a.s. convergence of $\mu_{K_l}$ towards $\chi_\infty^{(l)}$ weakly, supplemented with quantitative estimates for the Stieltjes transforms and the Kolmogorov distances.

\end{rem}
\newpage
\section{Appendix: Bounds on Kolmogorov distances between empirical spectral measures} \label{Sec8}

Let us remind the notations $\cal F_\nu$ for the cumulative distribution function of a measure $\nu$, and $D(\nu,\mu) = \sup_{t \in \R} \verti{\cal F_\nu(t) - \cal F_\mu(t)}$ for the Kolmogorov distance between two measures $\nu$ and $\mu$. 

It is a well-known fact that the convergence in Kolmogorov distance implies the weak convergence for probability measures, and there is even an equivalence if the limiting measure admits a Hölder continuous cumulative distribution function (\cite{geronimo2003necessary}). In  \cite{banna2020holder} and \cite{moi1}, the authors propose a general method to derive a convergence speed in Kolmogorov distance from estimates on the Stieltjes transforms. This method implies for instance our Proposition \ref{OzandKol}. However the techniques employed are not well suited to work with two discrete measures like empirical spectral distributions.

Two matrices close in spectral norm admit the same limiting spectral distribution if it exists. This does not imply any bound on the Kolmogorov distances however in general because the measures are discrete. In this section, we show how a quantitative result may still be obtained, provided the limiting empirical measure is regular enough. We strongly incite the reader to first examine \cite[Section 8]{moi1} where the technical tools are explained in full details.

\begin{prop}\label{thmSigmaLKol}
    Let $\Sigma$ and $\Sigmat \in \R^{p \times p}$ be symmetric matrices such that:
    \begin{enumerate}
    \item $\vertiii{\Sigma}$ and $\vertiii{\Sigmat}$ are bounded, and $\vertiii{\Sigma - \Sigmat}$ converges to $0$.
        \item $\mu_\Sigmat$ converges weakly to some probability measure $\nu^\infty$, and $\cal F_{\nu^\infty}$ is Hölder continuous for some parameter $\beta > 0$.
    \end{enumerate}
    Then $\mu_\Sigma$ converges weakly to $\nu^{\infty}$, and more precisely in Kolmogorov distance:\[  D\pt{\mu_\Sigma,\nu^{\infty}} \leq O \pt{ \vertiii{\Sigma - \Sigmat}^{\frac \beta {4 + 2 \beta}}+ D\pt{\mu_\Sigmat,\nu^\infty}} .\]
\end{prop}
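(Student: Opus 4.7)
My plan is to bypass the Stieltjes-transform inversion machinery of Proposition~\ref{OzandKol} (which, as noted in the introduction of this appendix, handles two discrete measures poorly) and instead exploit Weyl's eigenvalue inequality together with the Hölder regularity of $\cal F_{\nu^\infty}$, with $\mu_{\tilde\Sigma}$ serving as an intermediate.

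First I would apply Weyl's inequality to the symmetric matrices $\Sigma$ and $\tilde\Sigma$, yielding $\verti{\lambda_i(\Sigma) - \lambda_i(\tilde\Sigma)} \leq \vertiii{\Sigma - \tilde\Sigma} =: \epsilon$ for every sorted eigenvalue. This translates into the pointwise interlacing of the empirical cumulative distribution functions
\[\cal F_{\mu_{\tilde\Sigma}}(t-\epsilon) \leq \cal F_{\mu_\Sigma}(t) \leq \cal F_{\mu_{\tilde\Sigma}}(t+\epsilon),\]
so that $\verti{\cal F_{\mu_\Sigma}(t) - \cal F_{\mu_{\tilde\Sigma}}(t)}$ is controlled by the oscillation of $\cal F_{\mu_{\tilde\Sigma}}$ on any interval of length $2\epsilon$ centered at $t$. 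This step neatly converts the \emph{a priori} difficult comparison of two discrete measures into a question about the regularity of just one of them.

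The oscillation of $\cal F_{\mu_{\tilde\Sigma}}$ on a small interval $[s_1,s_2]$ is then bounded by inserting $\cal F_{\nu^\infty}$ twice in a triangle inequality and using the $\beta$-Hölder continuity of $\cal F_{\nu^\infty}$ for the middle term:
\[\verti{\cal F_{\mu_{\tilde\Sigma}}(s_2) - \cal F_{\mu_{\tilde\Sigma}}(s_1)} \leq 2\,D(\mu_{\tilde\Sigma}, \nu^\infty) + C\,|s_2 - s_1|^\beta.\]
Taking $|s_2 - s_1| = 2\epsilon$, combining with the interlacing bound, and applying one final triangle inequality yields
\[D(\mu_\Sigma, \nu^\infty) \leq 3\,D(\mu_{\tilde\Sigma}, \nu^\infty) + C'\epsilon^\beta.\]
This implies the stated bound since $\epsilon \to 0$ by hypothesis and $\beta \geq \beta/(4+2\beta)$; the weak convergence of $\mu_\Sigma$ to $\nu^\infty$ is then automatic because $\cal F_{\nu^\infty}$ is continuous.

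No serious technical obstacle stands in the way: the discreteness of both empirical measures is bypassed by the Weyl comparison, and the regularity needed to absorb the jumps of $\cal F_{\mu_{\tilde\Sigma}}$ is borrowed from $\nu^\infty$ via the triangle trick. In fact this route produces the stronger exponent $\beta$ rather than the claimed $\beta/(4+2\beta)$, so the main expository decision is whether to record the sharper version or to reproduce the exact exponent by a Stieltjes-transform argument in the spirit of \cite[Section~8]{moi1}, where the exponent $\beta/(4+2\beta)$ would arise naturally from optimizing a Berry--Esseen-type inequality against the Hölder modulus of $\cal F_{\nu^\infty}$ using the resolvent bound $\verti{g_\Sigma(z) - g_{\tilde\Sigma}(z)} \leq \vertiii{\Sigma - \tilde\Sigma}/\Im(z)^2$.
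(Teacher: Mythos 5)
Your proof is correct and takes a genuinely different route from the paper's. The paper proves this proposition by adapting the Stieltjes-transform machinery from \cite[Section 8]{moi1}: Lemma \ref{FixedBound} is derived from Bai's inequality, fed with the crude resolvent bound $\verti{g_\Sigma(z)-g_\Sigmat(z)} \leq \vertiii{\Sigma-\Sigmat}/\Im(z)^2$, and then the free parameters $y$ and $A$ are optimized, which is where the exponent $\beta/(4+2\beta)$ originates. You instead invoke Weyl's eigenvalue perturbation inequality to get $\verti{\lambda_i(\Sigma)-\lambda_i(\Sigmat)} \leq \vertiii{\Sigma-\Sigmat}=\epsilon$ for the sorted eigenvalues, which yields the CDF interlacing $\cal F_{\mu_\Sigmat}(t-\epsilon) \leq \cal F_{\mu_\Sigma}(t) \leq \cal F_{\mu_\Sigmat}(t+\epsilon)$ directly; the remaining oscillation bound is then handled by borrowing the Hölder modulus of $\cal F_{\nu^\infty}$ through a triangle inequality, exactly as in the paper's Lemma \ref{FixedBound}. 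Your route is more elementary (no Stieltjes transform is ever introduced), it is tailored precisely to the hypothesis actually given (a spectral-norm bound on the matrix difference, not merely a bound on the Stieltjes transforms), and it produces the strictly sharper exponent $\beta$ in place of $\beta/(4+2\beta)$; since $\epsilon \to 0$, this implies the stated bound. The paper's route is weaker here but is a template that survives in situations where one only controls $\verti{g_\Sigma-g_\Sigmat}$ pointwise and cannot use Weyl (e.g.\ when the two objects are not eigenvalues of nearby matrices), which is the setting throughout the rest of the article; the author appears to have reused that machinery for uniformity rather than because it is needed. Both arguments are sound; yours is the better proof of this specific proposition, and it would be worth recording the improved exponent $\beta$.
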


\begin{lem} \label{FixedBound} For any $y \in (0,1)$ and $A>0$: \begin{align*} D(\mu_\Sigmat,\mu_\Sigma)  \leq O \pt{ A \, \frac{ \vertiii{\Sigma - \Sigmat}}{y^2 }   + \frac{ 1} {y^2 A } +  y^\beta + D\pt{\mu_\Sigmat,\nu^\infty}} . \end{align*}
\end{lem}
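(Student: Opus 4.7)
My plan is to establish the lemma via a Bai--Götze--Tikhomirov-type smoothing inequality, in the spirit of the technical framework developed in \cite[Section 8]{moi1}, which compares the Kolmogorov distance between two measures to the difference of their Stieltjes transforms evaluated at a small imaginary shift $y > 0$ above the real axis.

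The first step will be to invoke (or re-derive following the techniques of \cite[Section 8]{moi1}) a smoothing inequality of the form
\[ D(\mu_\Sigmat, \mu_\Sigma) \lesssim \int_{-A}^{A} |g_\Sigma(x+iy) - g_\Sigmat(x+iy)|\, dx + \frac{1}{y^2 A} + \omega_y(\cal F_{\mu_\Sigmat}), \]
where $\omega_y(\cal F) = \sup_t \pt{\cal F(t+y) - \cal F(t-y)}$ denotes the modulus of continuity of $\cal F$ at scale $y$. Such an inequality stems from the Stieltjes inversion formula applied to the Cauchy-smoothed cumulative distribution functions $\tilde{\cal F}_\mu(t) = \pi^{-1}\int_{-\infty}^t \Im g_\mu(x+iy)\,dx$: the first term handles the contribution of the smoothed CDF difference on the truncation window $[-A, A]$; the $1/(y^2 A)$ term controls the tail of this integral outside $[-A, A]$, which is finite because both $\mu_\Sigma$ and $\mu_\Sigmat$ are supported in a fixed compact (a consequence of $\vertiii{\Sigma}$ and $\vertiii{\Sigmat}$ being bounded); the last term accounts for the error incurred when replacing $\cal F_{\mu_\Sigmat}$ by its Cauchy-smoothed version at scale $y$.

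I will then estimate each of the three contributions separately. For the middle integral, the resolvent identity $\cal G_\Sigma - \cal G_\Sigmat = \cal G_\Sigma(\Sigmat - \Sigma)\cal G_\Sigmat$ combined with $\vertiii{\cal G_\Sigma(x+iy)}\leq 1/y$ yields the pointwise bound $|g_\Sigma(x+iy) - g_\Sigmat(x+iy)| \leq \vertiii{\Sigma - \Sigmat}/y^2$, so that the integral is controlled by $2A \vertiii{\Sigma - \Sigmat}/y^2$. For the modulus of continuity of $\cal F_{\mu_\Sigmat}$, a triangle inequality against $\cal F_{\nu^\infty}$ combined with its $\beta$-Hölder regularity gives
\[ \omega_y(\cal F_{\mu_\Sigmat}) \leq 2 D(\mu_\Sigmat, \nu^\infty) + \omega_y(\cal F_{\nu^\infty}) \leq 2 D(\mu_\Sigmat, \nu^\infty) + O(y^\beta). \]
Assembling the three bounds produces the claim.

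The main obstacle is the first step, namely establishing the smoothing inequality with the specific tail term $1/(y^2 A)$. This requires a careful treatment of the tail $\int_{|x|>A} |g_\Sigma - g_\Sigmat|(x+iy)\,dx$, exploiting the compactness of the spectral supports to convert the crude a priori bound $\vertiii{\cal G(x+iy)}\leq 1/y$ into an integrable estimate. This is precisely the technical content that the paper attributes to \cite[Section 8]{moi1}; once this inequality is in hand, the remaining steps reduce to standard resolvent estimates and the triangle inequality.
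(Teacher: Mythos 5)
Your proposal is correct and follows the same strategy as the paper: a Bai-type smoothing inequality (deferred to \cite[Section 3.1]{moi1}), the resolvent identity bound $\verti{g_\Sigma(x+iy)-g_\Sigmat(x+iy)}\leq\vertiii{\Sigma-\Sigmat}/y^2$ to handle the truncated Stieltjes-transform integral, and the triangle inequality $\omega_y(\cal F_{\mu_\Sigmat})\leq 2D(\mu_\Sigmat,\nu^\infty)+\omega_y(\cal F_{\nu^\infty})$ to introduce the Hölder modulus of $\nu^\infty$. The only cosmetic difference is that you state the smoothing inequality with $\omega_y(\cal F_{\mu_\Sigmat})$ and then triangle-inequalize, whereas the paper substitutes that same triangle inequality directly inside Bai's inequality before invoking it; otherwise the decomposition and the treatment of each term coincide.
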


\begin{proof} We closely follow the \cite[Section 3.1]{moi1} with the only key difference that we plug in Bai's Inequality the following bound:
\begin{align*} \verti{\cal F_{\mu_\Sigmat}(x+t)-\cal F_{\mu_\Sigmat}(x)}   &\leq  \verti{\cal F_{\nu^\infty}(x+t)-\cal F_{\nu^\infty}(x)}  + 2 \, D\pt{\mu_\Sigmat,\nu^\infty}.
    \end{align*}We thus obtain:
\begin{align*} D(\mu_\Sigmat,\mu_\Sigma) &\leq \frac 2 \pi \pt{ \int_{\R} \verti{g_{\mu_\Sigmat}-g_{\mu_\Sigma}}(t + i y) dt + \frac 1 y \sup_{x \in \R} \int_{\br{\pm 2 y \tan\pt{\frac {3 \pi} 8 }}} \verti{\cal F_{\nu^\infty}(x+t)-\cal F_{\nu^\infty}(x)} dt }\\
&\quad + O\pt{D\pt{\mu_\Sigmat,\nu^\infty}}. \end{align*}
 For $z \in \C^+$ a classical application of the resolvent identity gives $\verti{g_\Sigma(z)-g_\Sigmat(z)} \leq \frac {\vertiii{\Sigma - \Sigmat}}{\Im(z)^2}$. The rest of the proof is exactly the same as in \cite[Section 3.1]{moi1}.
\end{proof}

\begin{proof}[Proof of Theorem \ref{thmSigmaLKol}]
    We optimize $y$ and $A$ in the above lemma by choosing $y_n = \vertiii{\Sigma - \Sigmat}^{\frac 1 {4 + 2 \beta}}$ and $A_n ={\vertiii{\Sigma - \Sigmat}^{-1/2}}$, which leads to the bound $D(\mu_\Sigmat,\mu_\Sigma)  \leq O \pt{ \vertiii{\Sigma - \Sigmat}^{\frac \beta {4 + 2 \beta}}+ D\pt{\mu_\Sigmat,\nu^\infty}}$. A final triangular inequality proves the proposition.
\end{proof}

\newpage

\printbibliography

\end{document}